\setlist[enumerate,1]{font=\bfseries,label=\arabic*.}
\def\@bibdataout@init{}\def\pre@bibdata{}\makeatother
\colorlet{darkred}{red!70!black}
\colorlet{darkblue}{blue!50!black}
\colorlet{medgreen}{green!70!black!70!white}
\newtheorem{theorem}{Theorem}[section]
\newtheorem{proposition}[theorem]{Proposition}
\newtheorem{lemma}[theorem]{Lemma}
\newcommand{\Sec}[1]{Section~\ref{#1}}
\newcommand{\Thm}[1]{Theorem~\ref{#1}}
\newcommand{\ie}{\emph{i.e.~}}
\newcommand{\defeq}{\stackrel{\mathrm{def}}=}
\newcommand{\Z}{\mathbb{Z}}
\newcommand{\shP}{\mathsf{\#P}}
\newcommand{\onto}{\twoheadrightarrow}
\renewcommand{\setminus}{\smallsetminus}
\newcommand{\sslash}{\mathbin{/\mkern-5mu/}}
\newcommand{\Inn}{\operatorname{Inn}}
\newcommand{\ord}{\operatorname{ord}}
\newcommand{\dil}{\operatorname{dil}}
\newcommand{\Maps}{\operatorname{Maps}}
\newcommand{\MCG}{\operatorname{MCG}}
\newcommand{\mcg}[1]{\operatorname{MCG}_*(\Sigma_{#1})}
\newcommand{\sfc}[1]{\Sigma_{#1}}
\newcommand{\sch}{\operatorname{sch}}
\newcommand{\e}{\epsilon}
\newcommand{\hR}{\hat{R}}
\newcommand{\ov}[1]{\overline{#1}}
\begin{document}
\title{Schur-type invariants of branched $G$-covers of surfaces}

\author{Eric Samperton}
\email{eric@math.ucsb.edu}
\affiliation{University of California, Santa Barbara}

\date{\today}

\begin{abstract}
Fix a finite group $G$ and a conjugacy invariant subset $C\subseteq G$.  Let $\Sigma$ be an oriented surface, possibly with punctures.  We consider the question of when two homomorphisms $\pi_1(\Sigma) \to G$ taking punctures into $C$ are equivalent up to an orientation preserving diffeomorphism of $\Sigma$.  We provide an answer to this question in a stable range, meaning that $\Sigma$ has enough genus and enough punctures of every conjugacy type in $C$.  If $C$ generates $G$, then we can assume $\Sigma$ has genus 0 (or any other constant).  The main tool is a classifying space for (framed) $C$-branched $G$-covers, and related homology classes we call branched Schur invariants, since they take values in a torsor over a quotient of the Schur multiplier $H_2(G)$.  We conclude with a brief discussion of applications to $(2+1)$-dimensional $G$-equivariant TQFT and symmetry-enriched topological phases.
\end{abstract}

\maketitle

\section{Introduction}
\label{s:intro}
Let $\sfc{g,n}$ denote an oriented genus $g$ surface with $n$ distinct marked points $p_1,\dots,p_n \in \sfc{g},$ thought of as punctures.  Throughout, we assume $\sfc{g,n}$ has a fixed basepoint distinct from the punctures.  Recall that the pointed mapping class group $\mcg{g,n}$ consists of isotopy classes of orientation-preserving diffeomorphisms of $\sfc{g,n}$ that fix the basepoint.  In particular, representatives of elements of $\mcg{g,n}$ are allowed to permute the punctures.  Since the basepoint of $\sfc{g,n}$ is fixed, $\mcg{g,n}$ acts on $\pi_1(\sfc{g,n})$.

Fix a finite group $G$.  The action of $\mcg{g,n}$ on $\pi_1(\sfc{g,n})$  induces an action of $\mcg{g,n}$ on the finite set of $G$-representations
\[ \hR_{g,n} \defeq \{ \pi_1(\sfc{g,n}) \to G \}. \]
The goal of the present paper is to understand the orbits of this action.

We make significant progress by deploying three different $\mcg{g,n}$-invariants of $\hR_{g,n}$.  When combined, these invariants are powerful enough that we can specify orbits uniquely in a certain ``stable range."  The precise results are provided by Theorem \ref{th:main}.  For now we simply remark that as $g$ and $n$ grow, the stable range includes almost all homomorphisms in $\hR_{g,n}$ with respect to the uniform counting measure.  After first accounting for two elementary invariants, our approach is to interpret the homomorphisms in $\hR_{g,n}$ as branched $G$-covers of $\Sigma_g$, where the punctures form the branch locus.  We then use some algebraic topology to construct invariant homology classes called branched Schur invariants.

The first invariant is the image of a homomorphism: if $\phi \in \hR_{n,g}$ has image $H \leq G$, then so does $\tau \cdot \phi$ for all $\tau \in \mcg{n,g}$.  Accordingly, we only need to consider the subset
\[ R_{g,n} \defeq \{ \pi_1(\Sigma_{g,n}) \onto G \} \subseteq \hR_{g,n} \]
consisting of all \emph{surjective} homomorphisms.  In terms of branched covers, this will mean we only consider \emph{connected} covers of $\sfc{g,n}$.  This reduces us to the narrower question: what are the orbits of the action of $\mcg{g,n}$ on $R_{g,n}$?

The second invariant is the branch type of a homomorphism, defined as follows.  We begin with some notation: given a conjugacy invariant subset $C \subseteq G$, we let $C\sslash G$ denote the set of conjugacy classes intersecting $C$.  If $c \in G$, we denote the conjugacy class of $c$ by $\ov{c} \in G\sslash G$.

For each puncture $p_i$, pick a simple closed loop $\gamma_i \in \pi_1(\sfc{g,n})$ such that $\gamma_i$ winds once counterclockwise around $p_i$, and $\gamma_i$ does not wind around any of the other punctures.  It is not quite correct to say that $\mcg{g,n}$ preserves the conjugacy class of $f(\gamma_i)$, since $\mcg{g,n}$ can permute the punctures.  However, if we form a vector
\[ v_\phi \in \Z_{\geq 0}^{G\sslash G} \]
by letting the component of $\ov{c} \in G\sslash G$ in $v_\phi$ be
\[ (v_\phi)(\ov{c}) \defeq |\{ 1 \leq i \leq n \mid \phi(\gamma_i) \in \ov{c} \}|, \]
then elementary algebraic topology shows $\mcg{g,n}$ preserves $v_\phi$.  We call $v_\phi$ the \emph{branch type}, or \emph{branching data}, of $\phi$.

We can interpret the branch type $v_\phi$ as a multiset of cardinality $n$,  meaning the sum of the entries of the vector $v_\phi$ is $n$.  Given any branching data $v \in \Z_{\geq 0}^{G\sslash G}$ of cardinality $n$, we define the $\mcg{g,n}$-invariant subset
\[ R_{g,v} \defeq \{ \phi \in R_{g,n} \mid v_\phi = v \} \subseteq R_{g,n}. \]
In fact, we will refine our approach by fixing a conjugacy invariant subset $C \subseteq G$ and only considering
\[ v \in \Z_{\geq 0}^{C\sslash G} \subseteq \Z_{\geq 0}^{G\sslash G}. \]
We emphasize that $C$ need not be closed under inversion.  In everything that follows, we allow $C = \emptyset$, although then we must set $n=0$.

In Section \ref{ss:defn}, we define the notion of \emph{(framed) $C$-branched $G$-cover} of a smooth manifold.  Then, given $\phi \in R_{g,v}$, we construct a $C$-branched $G$-cover of $\Sigma_g$ with branch locus consisting of the punctures $p_1,\dots, p_n$.  The details of this are provided in Section \ref{ss:book}.  By a small abuse of notation, we continue to denote this cover by $\phi$.

In Section \ref{ss:classifying}, we describe a classifying space for (concordance classes of) $C$-branched $G$-covers of smooth manifolds, denoted $BG_C$.  The $C$-branched $G$-cover $\phi$ yields a (homotopy class) of a map $\phi_\#: \Sigma_g \to BG_C$.  The \emph{$C$-branched Schur invariant} of $\phi$ is the integral homology class
\[ \sch_C(\phi) \defeq \phi_*[\Sigma_g] \in H_2(BG_C) \]
where $[\Sigma_g] \in H_2(\Sigma)$ is the orientation of $\Sigma_g$.  (All of the homology groups in this paper have integral coefficients.)  Since we only consider orientation-preserving mapping classes, $\sch_C(\phi)$ is $\mcg{g,n}$-invariant.

Our main theorem shows that the branched Schur invariant completely determines the orbits of $\mcg{g,n}$ acting on $R_{g,v}$ whenever the genus $g$ and branching data $v$ are ``large enough."  When we say $g$ is \emph{large enough}, we mean this is in the usual sense for integers.  However, when we say $v$ is \emph{large enough}, we mean that every conjugacy class $\ov{c} \in C\sslash G$ occurs in $v$ with enough multiplicity, \ie  that all of the integers $v(\ov{c})$ are large enough.  In particular, when we say $v$ is large enough, we do not simply mean that the cardinality of $v$ is large enough.

Before stating the main theorem, we introduce one more definition.  Recall that the group homology $H_*(G)$ is equivalent to the singular homology $H_*(BG)$, where $BG$ is the classifying space for $G$.

We say that a homology class in $H_2(G)$ is a \emph{$C$-torus} if it can be represented by a pointed map from the torus $(S^1 \times S^1,*)$ to $(BG,*)$ such that the induced map $\pi_1(S^1 \times S^1) \to G$ sends the loop winding once counterclockwise around the first factor of $S^1$ to an element of $C$.  Define the \emph{$C$-reduced Schur multiplier} of $G$ as the quotient
\[ M(G)_C \defeq H_2(G)/\langle C\text{-tori}\rangle. \]
In Section \ref{ss:homology}, we compute the low-dimensional homology of $BG_C$, and learn that $M(G)_C$ plays an important role.

\begin{theorem}
Fix a finite group $G$ and a conjugacy invariant subset $C \subseteq G$.  Let $v \in \Z_{\geq 0}^{C\sslash G}$ be a branch type of cardinality $n$.
\begin{enumerate}
\item If $v$ and $g$ are large enough, then the $C$-branched Schur invariant is a complete invariant for the orbits of the action of $\mcg{g,n}$ on $R_{g,v}$.
\item If $v$ is large enough and $C$ generates $G$, then the $C$-branched Schur invariant is a complete invariant for the orbits of the action of $\mcg{g,n}$ on $R_{g,v}$ \emph{for all} $g \geq 0$.
\end{enumerate}
In both cases, the set of orbits
\[ R_{g,v} / \mcg{g,n} \]
is a torsor for $M(G)_C$.
\label{th:main}
\end{theorem}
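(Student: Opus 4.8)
The plan is to exploit the classifying space $BG_C$ as the primary organizing tool, reducing the dynamical question about $\mcg{g,n}$-orbits to a homotopy-theoretic statement about maps $\Sigma_g \to BG_C$. First I would establish that $\sch_C$ is well-defined and $\mcg{g,n}$-invariant (already sketched in the introduction via orientation-preservation), and then compute $H_1(BG_C)$ and $H_2(BG_C)$ in the promised \Sec{ss:homology} — the expectation being that $H_1(BG_C)$ records precisely the data of the image subgroup and branch type (so that restricting to a fixed pair $(G,v)$ with surjectivity pins down a single coset/component), while the fiber of $H_2(BG_C)$ over that $H_1$-data is a torsor over $M(G)_C$. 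This immediately gives the last sentence of the theorem as a formal consequence, \emph{provided} the map $\phi \mapsto \sch_C(\phi)$ is surjective onto that torsor and its fibers are exactly the $\mcg{g,n}$-orbits; the torsor structure is then inherited from the group structure on $H_2(BG_C)$ relative to the $H_1$-stratum. So the entire content of the final statement is bundled into parts (1) and (2): that $\sch_C$ separates orbits and that every class is hit.

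For realizability (surjectivity of $\sch_C$ onto the torsor), I would argue by a connect-sum / stabilization construction: given $\phi \in R_{g,v}$ and a target class differing by some $m \in M(G)_C$, represent $m$ by a map from a surface (a wedge of tori, using that $H_2(G)$ is generated by tori and quotienting by $C$-tori), and graft it onto $\phi$ along an embedded subsurface, using the extra genus (in case (1)) or — in case (2), where $C$ generates and $g$ may be $0$ — trading genus for punctures via the standard relation that a handle can be absorbed into a pair of canceling branch points once $C$ generates $G$. This last maneuver is exactly why the hypothesis ``$C$ generates $G$'' removes the genus requirement. Simultaneously one checks that if $\sch_C(\phi) = \sch_C(\psi)$, then $\phi$ and $\psi$ lie in one orbit; here the natural route is a bordism/cobordism argument — two maps $\Sigma_g \to BG_C$ with equal pushforward of the fundamental class are, in the stable range, related by a sequence of elementary moves (handle slides, braiding of branch points, ``stabilization'' moves adding and removing canceling pairs of branch points) each realized by an element of $\mcg{g,n}$ — combined with an appeal to stability theorems for mapping class group actions on Hurwitz-type spaces in the spirit of the Conway–Parker theorem and its homological refinements.

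The main obstacle I anticipate is precisely the injectivity/orbit-separation half in the \emph{sharp} stable range: showing that whenever $v$ is large enough (every $v(\ov c)$ large, not merely $n$ large) and $g$ is large enough — and, in case (2), for \emph{all} $g \geq 0$ once $C$ generates — two homomorphisms with the same branched Schur invariant are actually $\mcg{g,n}$-equivalent. This requires a genuinely quantitative Conway–Parker-type argument adapted to the $C$-branched setting: one must show that once each conjugacy class appears with enough multiplicity, the ``stable'' invariants ($\im$, $v$, $\sch_C$) are complete, i.e. the relevant Hurwitz space becomes connected after fixing those invariants. Handling the genus-$0$ case in part (2) is the subtlest point, since one cannot use genus to maneuver and must instead carefully exploit generation of $G$ by $C$ to perform all needed moves using only braidings and insertions of branch points; I expect this to hinge on a lemma that, when $C$ generates $G$, any two framings/trivializations over a genus-$0$ base with fixed branch type and fixed $\sch_C$ are connected by Hurwitz moves. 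Once parts (1) and (2) are in hand, the torsor conclusion follows formally from the computation of $H_2(BG_C)$ and the surjectivity of $\sch_C$.
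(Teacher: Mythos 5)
Your high-level strategy matches the paper's: exploit $BG_C$, compute $H_2(BG_C)$ via the Mayer--Vietoris sequence to identify the torsion piece as $M(G)_C$, and then prove both that $\sch_C$ is surjective onto the appropriate coset (by grafting on covers realizing any class of $M(G)_C$) and that it separates orbits after sufficient stabilization. Your intuition that the last sentence of the theorem is a formal consequence of the $H_2$ computation plus parts (1)--(2) is also correct. However, there is one real gap in the middle of your injectivity argument, and a misidentified mechanism in part (2).

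The gap is the framing problem. Your ``canceling pairs of branch points'' necessarily carry opposite framings: a $\overline{c}$-stabilization adds one point of type $(\overline{c},+1)$ \emph{and} one of type $(\overline{c},-1)$, because the null-cobordism in Figure~\ref{f:trivializations} requires opposite signs. Consequently, the Conway--Parker style merging argument you sketch never operates inside $R_{g,v}$ with $v \in \Z_{\geq 0}^{C\sslash G}$ (all punctures positively framed); it operates in the larger spaces $R_{g,w}$ with $w \in \Z_{\geq 0}^{C\sslash G \times \{+1,-1\}}$. Your proposal would therefore prove that $\sch_C$ is complete for covers with mixed framings, which is strictly weaker than the theorem. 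The paper bridges this with the dilation construction (Section~\ref{ss:dilation}): replace each negatively framed $\overline{c}$-puncture by $\ord(c)-1$ positively framed copies, prove that $\dil : R_{g,w}/\mcg{g,n} \to R_{g,w_{\dil}}/\mcg{g,n+N}$ is surjective for suitable $w$ (Lemma~\ref{l:dilation}), and compute how dilation shifts the Schur invariant (Lemma~\ref{l:dilhom}). Without some such device, the injectivity conclusion in $R_{g,v}$ does not follow.

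Separately, for part (2) you propose ``trading genus for punctures'' once $C$ generates $G$, but that is the mechanism for the \emph{realizability} half at genus $0$ (via Lemma~\ref{l:unbranched}), not for orbit-separation. The reason handle stabilization can be dropped from the injectivity argument when $\langle C\rangle = G$ is that $\pi_1(BG_C)$ is then trivial (Lemma~\ref{l:pi1}), so by the Hopf--Whitney classification a map $\Sigma_g \to BG_C$ is determined up to homotopy by its image of $[\Sigma_g]$ in $H_2$; equal Schur invariants then give a concordance over the cylinder $\Sigma_g \times [0,1]$, and a bridge-position argument for the branch tangle in that cylinder yields a common puncture stabilization without changing genus. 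Your proposed lemma about genus-$0$ Hurwitz moves points in the right direction but misses that the genus-free statement actually holds for \emph{all} $g$, and that the mechanism is homotopy-theoretic (vanishing of $\pi_1$) rather than combinatorial.
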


In principle, \Thm{th:main} gives a practical way to compute the orbits of $R_{g,v}$ in the \emph{stable range}, meaning when $v$ and $g$ are large enough.  One caveat, however, is that we do not have any upper bounds on when the stable range begins.  In other words, how large $v$ and $g$ must be in order to guarantee the conclusions of Theorem \ref{th:main} is unclear to us.  One thing is clear: the answer depends intimately on $G$ and $C$.  See Section \ref{ss:unstable} for more discussion.

Theorem \ref{th:main} is not entirely new.  On one hand, Dunfield and Thurston solved the unbranched case where $C=\emptyset$ and $n=0$ \cite{DunfieldThurston:random}.  In this case, $BG_C=BG$ and $M(G)_C = M(G) = H_2(G)$, and we recover their results.  On the other hand, Ellenberg, Venkatesh and Westerland solved the braid case  where $g=0$ and $\langle C\rangle =G$ \cite{EVW:hurwitz2}.  Their techniques build on work of Fried and Volklein \cite{FriedVolklein:galois}, who used unpublished ideas of Conway and Parker to solve the braid case with the additional assumptions that $C=G$ and $M(G)_C = 0$.  Thus, our Theorem \ref{th:main} can be understood as an interpolation between the two extremal cases of \cite{DunfieldThurston:random} and \cite{EVW:hurwitz2}.  We note that our approach via the classifying space $BG_C$ is briefly mentioned, but left undeveloped, in \cite{EVW:hurwitz2}.

\Sec{ss:main} contains the proof of \Thm{th:main}.  The first statement of the theorem follows by applying Proposition \ref{p:stable} in a manner similar to how Dunfield and Thurston  applied Livingston's stable equivalence theorem \cite{Livingston:stabilizing}.  The second statement follows from the Hopf-Whitney classification, a kind of generalization of the Hurewicz isomorphism.  The final part of the theorem follows, in part, from a computation of the second homology of $BG_C$ that we carry out in Section \ref{ss:homology}.  Our proof exploits the well-known fact that homology and oriented bordism are the same in dimension 2.  Section \ref{s:outlook} contains more remarks on the stable range, and a brief discussion of potential applications to symmetry-enriched topological phases. 

\acknowledgments{I want to thank Greg Kuperberg for helpful feedback concerning earlier drafts, in particular for bringing my attention to a gap in the first version's Lemma 3.2, as well as helping to streamline the proof of the second part of Theorem \ref{th:main} by using the Hopf-Whitney classification.  I also want to thank Greg for encouraging me to learn TikZ; if nothing else, the figures in this version should make it look better than the first version.  Finally, my thanks to the anonymous referee for patient and helpful comments.}

\section{$C$-branched $G$-covers}
\label{s:brand}
Throughout this section, let $G$ be a discrete group, and let $C \subseteq G$ be a conjugacy invariant subset, possibly empty and not necessarily inverse-closed.

\subsection{Definitions and examples}
\label{ss:defn}
Let $M$ be a smooth, connected manifold, possibly with boundary.  A \emph{$C$-branched $G$-cover of $M$} consists of the following data:
\begin{enumerate}
\item A smooth map $f:\tilde{M} \to M$, where $\tilde{M}$ is a smooth manifold.
\item A codimension 2 properly embedded submanifold $K \subseteq M$, possibly empty, called the \emph{branch locus}.
\item A framing of $K$, \ie a trivialization of the unit disk bundle
\[N(K) \cong K \times D^2, \]
where $D^2$ has the standard orientation it inherits as a submanifold of $\mathbb{R}^2$.  We will conflate $N(K)$ with a closed regular neighborhood of $K$.
\end{enumerate}
This data must satisfy the following conditions:
\begin{enumerate}
\item $f\mid f^{-1}(M\setminus K)$ is a regular $G$-cover.
\item ($C$-branched condition) The monodromy homomorphism $\pi_1(M \setminus K) \to G$ associated to the regular $G$-cover $f\mid f^{-1}(M\setminus K)$ sends a counterclockwise loop around the boundary of each fiber $D^2$ of $N(K)$ into $C$.  (``Counterclockwise" is determined by the framing of $K$.)
\item $f\mid f^{-1}(K)$ is a cover over each component of $K$.
\end{enumerate}
If $M$ is not connected, then a $C$-branched $G$-cover of $M$ is simply a $C$-branched $G$-cover of each component of $M$.  We will variously abuse notation by referring to $\tilde{M}$ or $M$ as a $C$-branched $G$-cover of $M$, and taking the other structures for granted.  Also, $C$ and $G$ will be fixed throughout, so if we say ``branched cover," we always mean $C$-branched $G$-cover.

We make several remarks regarding this definition.  First, if either $C = \emptyset$ or $K = \emptyset$, then a $C$-branched $G$-cover of $M$ is just a regular, unbranched $G$-cover of $M$.  Second, a branched cover of $M$ is connected if and only if the regular cover over $M\setminus K$ is connected if and only if the monodromy homomorphism $\pi_1(M \setminus K) \to G$ is surjective.

Finally, note that if we pick a component of $K$, then for any two fibers of $N(K)$ over that component, their counterclockwise boundary loops map to conjugate elements of $G$.  Thus, to every component of $K$ we associate a conjugacy class in $C$, called the \emph{branch type} of the component.  Relatedly, if $K \subseteq M$ does not admit a framing, then, by our definition, $M$ does not have any $C$-branched $G$-covers with branch locus $K$.  Thus, it is arguably more precise to call our covers ``\emph{framed} $C$-branched $G$-covers."  There are alternative reasonable definitions for what it means to be $C$-branched in the absence of a framing. For example, one may suppose the weaker condition that $N(K)$ admit a consistent orientation of its fibers.  However,
the classifying space $BG_C$ we exploit later classifies framed covers, so we will absorb the adjective ``framed" into the adjective ``$C$-branched."  See the end of Section \ref{ss:classifying} for more discussion.

There are three equivalence relations on $C$-branched $G$-covers that we will need: equivalence, concordance, and cobordism.  Each of these is coarser than the preceding one.  As we shall see, \Thm{th:main} can be interpreted as saying that cobordism is sometimes enough to guarantee equivalence anyway.

Two $C$-branched $G$-covers $\tilde{M}_0$ and $\tilde{M}_1$ of $M_0$ and $M_1$, respectively, are \emph{equivalent} if there is a diffeomorphism from $\tilde{M}_0$ to $\tilde{M}_1$ that takes $f_0^{-1}(K_0)$ diffeomorphically to $f_1^{-1}(K_1)$ so that the framings are identified, and that is an equivalence of $G$-covers on the complement of the branch loci.  If $M_0$ and $M_1$ are oriented, we require equivalences to preserve orientations.

Two $C$-branched $G$-covers $\tilde{M}_0$ and $\tilde{M}_1$ are \emph{cobordant} if there is a manifold $W$ such that $\partial W = M_0 \sqcup M_1$, and a $C$-branched $G$-cover $\tilde{W}$ that is equivalent to $\tilde{M}_0$ when restricted to $M_0$ and equivalent to $\tilde{M}_1$ when restricted to $M_1$.  In particular, the framing of the branch locus of $W$ has to extend the framings of $K_0$ and $K_1$.  We can also talk about oriented cobordism.  In the sequel, when we say cobordism we will always mean oriented cobordism.

If $W = M \times I$ is a cobordism, then we say $\tilde{M}_0 = M \times \{0\}$ and $\tilde{M}_1 = M \times \{1\}$ are \emph{concordant}.  Every equivalence yields a concordance by taking the mapping cylinder. Not all concordances are cylinders because there can be births and deaths of components of the branch loci.

The next lemma shows that a $C$-branched $G$-cover of $M$ is uniquely specified by $K$, a framing of $K$, and a homomorphism $\pi_1(M \setminus K) \to G$ satisfying the $C$-branched condition.  The proof explains the requirement that $N(K)$ be trivializable (or, at least, that the fibers of $N(K)$ be coherently orientable).

\begin{lemma}
Let $K$ be a codimension 2 properly embedded submanifold of $M$ such that $N(K)$ is trivializable.  Then for every choice of framing of $K$ and for every regular $G$-cover of $M \setminus K$ that satisfies the $C$-branched condition with respect to the chosen framing, there is a unique (up to equivalence) $C$-branched $G$-cover of $M$ with branch locus $K$ with the given framing and $G$-cover of $M \setminus K$.
\label{l:extension}
\end{lemma}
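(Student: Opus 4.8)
The plan is to build $\tilde{M}$ by gluing the given regular $G$-cover over $M\setminus K$ to a model branched cover on a tubular neighbourhood of $K$, and then to check that every $C$-branched $G$-cover carrying the prescribed data arises this way, uniquely up to equivalence. Since $M = (M\setminus K)\cup \operatorname{int} N(K)$ with overlap $\operatorname{int} N(K)\setminus K$, and since being a smooth $C$-branched $G$-cover is a local condition, it is enough to produce, uniquely up to equivalence, a $C$-branched $G$-cover of $N(K)$ with branch locus $K$, the given framing, and restricting over $N(K)\setminus K$ to the given $G$-cover; one then glues this to the untouched given cover over $M\setminus K$ by the identity on the common restriction over the overlap.

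The construction over $N(K)$ proceeds one component $K_j\subseteq K$ at a time. Let $c_j\in C$ represent its branch type and put $m_j = \ord(c_j)$. The framing identifies $N(K_j)$ with $K_j\times D^2$, and I would use the fibrewise $m_j$-th power map $\sigma_j\colon K_j\times D^2 \to K_j\times D^2$, $(x,z)\mapsto(x,z^{m_j})$, which over $K_j\times(D^2\setminus 0)$ is the connected cyclic $m_j$-fold cover in the disk direction with deck group $\Z/m_j$ acting by fibrewise rotation. Pull the given $G$-cover of $N(K_j)\setminus K_j$ back along $\sigma_j$ over the deleted neighbourhoods. Because $\sigma_j$ sends a meridian to $m_j$ times a meridian while the given monodromy sends a meridian into $\ov{c_j}$, the pulled-back cover has trivial meridian monodromy, hence extends uniquely to a regular $G$-cover $\hat{E}_j$ over all of $K_j\times D^2$. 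The generator of $\Z/m_j$ lifts canonically to the pulled-back cover --- it acts only on the base coordinate of the fibre product --- and extends uniquely over $K_j\times\{0\}$; on each $G$-torsor fibre over the branch locus this lifted generator acts by right translation by an element of order $m_j$, hence freely, so the $\Z/m_j$-action on $\hat{E}_j$ is smooth and free near the branch locus. Therefore $\tilde{M}|_{N(K_j)} \defeq \hat{E}_j/(\Z/m_j)$ is a smooth manifold, mapping smoothly to $N(K_j) = \hat{N}_j/(\Z/m_j)$; its restriction to the deleted neighbourhood is the given $G$-cover, and comparison with the standard model $\bigl(G\times D^2\bigr)/(\Z/m_j)$, with $\Z/m_j$ acting via the order-$m_j$ map $(g,z)\mapsto(gc_j, e^{2\pi i/m_j}z)$, confirms the $C$-branched condition and condition~(3). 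Assembling over the components of $K$ and gluing to $M\setminus K$ completes the construction, and by design the framed branch locus is $K$ and the $G$-cover over $M\setminus K$ is the prescribed one.

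For uniqueness, suppose $\tilde{M}'$ is another such cover. Its structure near the branch locus is forced: near a point lying over $K_j$, the manifold-and-smooth-map conditions together with condition~(3) and the framing leave no option but the standard branched chart $(x,w)\mapsto(x,w^{m_j})$, and which chart occurs is pinned down by the restricted $G$-cover on the punctured neighbourhood, whose local monodromy is cyclic of order $m_j$. So $\tilde{M}'|_{N(K_j)}$ is canonically the same fibrewise completion of the given $G$-cover as $\hat{E}_j/(\Z/m_j)$; equivalently, $\hat{E}_j$ is the unique extension of a covering space and its $\Z/m_j$-action the unique extension of a covering automorphism across a codimension-$2$ locus. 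Hence any equivalence of $G$-covers over $M\setminus K$ extends uniquely over $K$, preserving framings, which yields the required equivalence $\tilde{M}\cong\tilde{M}'$.

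The step I expect to be the main obstacle is checking that $\tilde{M}$ is an honest smooth manifold along the branch-locus preimage, i.e.\ that the lifted $\Z/m_j$-action on $\hat{E}_j$ is genuinely free and not merely effective (an orbifold point would appear if some proper power of $c_j$ acted trivially on a fibre). This is exactly where trivializability of $N(K)$ enters: without a trivialization --- or at least a coherent orientation of the fibres of $N(K)$ --- there is no fibrewise $m_j$-th power map to pull back along, the counterclockwise meridian and hence the $C$-branched condition are not well defined, and $K$ may admit no $C$-branched $G$-cover at all. The remaining bookkeeping (components of $K$ with nontrivial topology or differing branch types, and smoothness across $\partial N(K)$) is routine.
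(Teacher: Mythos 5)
Your overall strategy---build a branched cover over $N(K)$ and glue it to the untouched $G$-cover over $M\setminus N(K)^\circ$---matches the paper's. The difference is in how the cover over $N(K)$ is produced. The paper writes down a fixed local model $D^2_{\overline{c}}$ (a disjoint union of $|G|/\ord(c)$ copies of the $\ord(c)$-th power map on the disk), forms the literal product $K_{\overline{c}}\times D^2_{\overline{c}}$, and then argues that its restriction to $\partial N(K)$ agrees with the given cover. You instead pull the given cover back along the fibrewise $m_j$-power map $\sigma_j$, use that the pulled-back meridian monodromy is trivial to extend across $K_j$, lift the $\Z/m_j$ deck action, and quotient. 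This is a genuinely different construction of the same local branched cover, and it has a real advantage: when $K_j$ has nontrivial fundamental group (relevant in the $3$-dimensional cobordism arguments later in the paper), the given $G$-cover of $\partial N(K_j)$ can have nontrivial longitudinal monodromy, which a literal product $K_j\times D^2_{\overline{c}}$ cannot reproduce; your $\hat{E}_j/(\Z/m_j)$ carries the longitudinal monodromy automatically because it is built from the given cover rather than from a model. The cost is that you must verify smoothness of the quotient, i.e.\ that the lifted $\Z/m_j$ acts freely on the central fibre. You correctly identify this as the crux and correctly assert the answer (the generator acts by right translation by an element conjugate to $c_j^{-1}$, hence of order exactly $m_j$), but the assertion is given only a one-line hint; the actual verification requires, e.g., tracking how $\tau$ and the $G$-action by $c_j$ permute the $m_j$ sheets of $\hat{E}_j$ lying over a single sheet of $E$ near $K_j$, and observing that they do so by inverse cyclic permutations. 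Similarly, your uniqueness paragraph claims the smooth structure near $K$ is ``forced to be the standard branched chart,'' which is slightly stronger than what the definition (which only constrains $f$ away from $K$ and over $K$) guarantees outright; the correct statement is uniqueness up to equivalence, which does follow from the uniqueness of the extension of a $G$-cover and of a finite-order bundle automorphism across a codimension-$2$ locus, as you say. These are expository gaps rather than mathematical errors, and the paper is itself terse here (it simply declares the model and says uniqueness is ``straightforward to verify''). On balance your route is sound and, if anything, more robust than the one in the text.
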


\begin{proof}
Let $\overline{c} \in C\sslash G$.  There is a standard way to construct a branched $G$-cover of the oriented disk $D^2$, with a single branch point of type $\overline{c}$.  Each component is modeled on the the $p^{\text{th}}$ power map $z \mapsto z^p$ on the unit disk in the complex plane, where $p$ is the order of the group element $c$; there are $|G|/p$ components.  Call this branched cover $D_{\overline{c}}^2$.

Given a trivialization of $N(K)$, let $K_{\overline{c}}$ be a component of $K$ with branch type $\overline{c}$.  Then $K_{\overline{c}} \times D_{\overline{c}}^2$ is a $C$-branched $G$-cover of $N(K_{\overline{c}})$ with branch locus $K_{\overline{c}}$.  We form a $C$-branched $G$-cover of $N(K)$ by taking the union over components of $K$.  The $C$-branched condition implies the restriction of this branched cover to $\partial N(K)$ is an unbranched regular $G$-cover that is equivalent to the cover of $\partial N(K)$ induced by the given regular $G$-cover of $M\setminus K$.  So we can glue the regular $G$-cover of $M \setminus N(K)^\circ$ to the $C$-branched $G$-cover of $N(K)$ along $\partial N(K)$.  Note that, before gluing, it may be necessary to homotope one of the covers in a neighborhood of $\partial N(K)$ to guarantee the result is smooth.  This yields a $C$-branched $G$-cover of $M$ with branch locus $K$ with the given trivialization of $K$ and $G$-cover of $M \setminus K$.  It is straightforward to verify that any other branched cover of $M$ with this data is equivalent.
\end{proof}

\begin{figure*}
\begin{tikzpicture}[decoration={markings,
    mark=at position 0.4 with {\arrow{angle 90}}},scale=0.8]
\draw[very thick] (3,0) circle [radius = 3];
\draw[very thick] (0,0) arc [x radius = 3, y radius = 1.5, start angle = 180, end angle = 360];
\draw[very thick,dashed] (0,0) arc [x radius = 3, y radius = 1.5, start angle = 180, end angle = 0];
\fill[medgreen] (3,2.6) circle [radius=.08];
\draw[below left] (3,2.6) node {$+$};
\fill[medgreen] (3,-2.6) circle [radius=.08];
\draw[above left] (3,-2.6) node {$-$};
\draw[very thick, white,double=black] (0,0) .. controls (3.5,2.5) .. (3.4,2.8) .. controls (3.3,3.1) and (2.5,3) .. (0,0);
\draw[thick,postaction={decorate}] (0,0) .. controls (3.5,2.5) .. (3.4,2.8) .. controls (3.3,3.1) and (2.5,3) .. (0,0);
\draw[thick,postaction={decorate}] (0,0) .. controls (3.5,-2.5) .. (3.4,-2.8) .. controls (3.3,-3.1) and (2.5,-3) .. (0,0);
\fill (0,0) circle [radius=.1];
\draw (3,1.8) node {\large $c$};
\draw (3.1,-1.8) node {\large $c$};
\begin{scope}[xshift = 240]
\draw[very thick] (3,0) circle [radius = 3];
\draw[very thick] (0,0) arc [x radius = 3, y radius = 1.5, start angle = 180, end angle = 360];
\draw[very thick,dashed] (0,0) arc [x radius = 3, y radius = 1.5, start angle = 180, end angle = 0];
\fill[medgreen] (3,2.6) circle [radius=.08];
\draw[below left] (3,2.6) node {$+$};
\fill[medgreen] (3,-2.6) circle [radius=.08];
\draw[above left] (3,-2.6) node {$+$};
\draw[very thick, white,double=black] (0,0) .. controls (3.5,2.5) .. (3.4,2.8) .. controls (3.3,3.1) and (2.5,3) .. (0,0);
\draw[thick,postaction={decorate}] (0,0) .. controls (3.5,2.5) .. (3.4,2.8) .. controls (3.3,3.1) and (2.5,3) .. (0,0);
\draw[thick,postaction={decorate}] (0,0) .. controls (2.5,-3) and (3.3,-3.1) .. (3.4,-2.8) .. controls (3.5,-2.5) .. (0,0);
\fill (0,0) circle [radius=.1];
\draw (3,1.8) node {\large $c$};
\draw (3.3,-1.8) node {\large $c^{-1}$};
\end{scope}
\end{tikzpicture}
\caption{Two examples of $C$-branched $G$-covers of $S^2$.  The branch locus $K$ of both covers (indicated in green) consists of two points.  The framing of $K$ is indicated by signs on the branch points.  The left cover can be extended into the 3-ball by adding an oriented arc that connects the two branch points and using the blackboard framing.  Thus, the left cover is null-cobordant.  The right cover is not null-cobordant, because the two branch points have the same framing.  Note that this is true even if $c=c^{-1}$, and even though the unbranched $G$-covers of $S^2 \setminus K$ are the same.  Thus we see the choice of framing can affect the equivalence class of a $C$-branched $G$-cover whenever $C$ is nonempty.}
\label{f:trivializations}
\end{figure*}
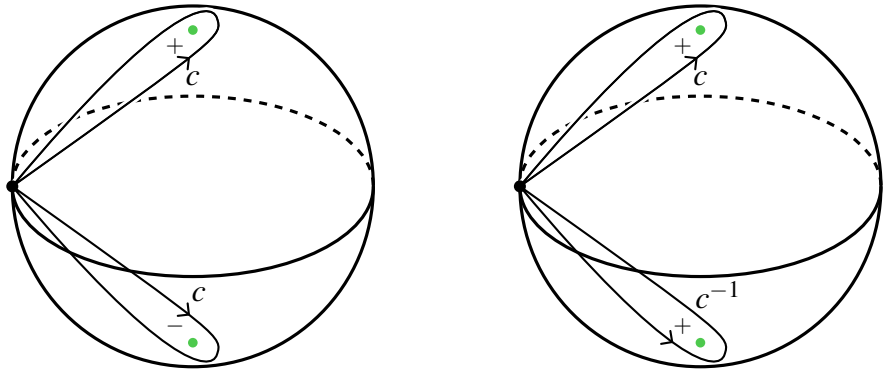

\begin{figure}
\begin{tikzpicture}[xscale=1, yscale=.9,decoration={markings,
    mark=at position 0.75 with {\arrow{angle 90}}}]
\fill[blue!10] (5,0) arc[x radius = 1.035, y radius = .6, start angle=0, end angle=-360];
\draw[thick,blue,dashed] (5,0) arc[x radius = 1.035, y radius = .6, start angle=0, end angle=-180];
\draw[medgreen,thick,postaction={decorate}] (4,0) arc [x radius=2, y radius=.85,start angle=360, end angle=0];
\fill[medgreen] (4,0) circle [radius = 0.04];
\draw[blue,postaction={decorate},thick] (5,0) arc[x radius = 1.035, y radius = .6, start angle=0, end angle=180];
\draw[thick,red,postaction={decorate}] (2,0) circle [x radius=2.85, y radius=1.35];
\fill (4.7,.43) circle [radius = .04];
\draw[very thick] (2,0) circle [x radius=3, y radius=1.5];
\draw[very thick] (1.07,-0.05)  arc[x radius = .95, y radius = .5, start angle=170, end angle=11];
\draw[very thick] (1,.1)  arc[x radius = 1, y radius = .5, start angle=185, end angle=355];
\end{tikzpicture}
\caption{This figure shows that every $C$-torus is null-cobordant as a $C$-branched $G$-cover. The red and blue curves represent the standard generators of $\pi_1(S^1 \times S^1)$.  Suppose the blue curve is mapped to $a \in C$ and the red curve is mapped to some element $b \in G$ that commutes with $a$.  We can extend this (unbranched) $G$-cover of $S^1 \times S^1$ to a $C$-branched $G$-cover of the solid torus $D^2 \times S^1$.  The branch locus $K$ is the core, indicated by the green curve, equipped with the blackboard framing.}
\label{f:torus}
\end{figure}
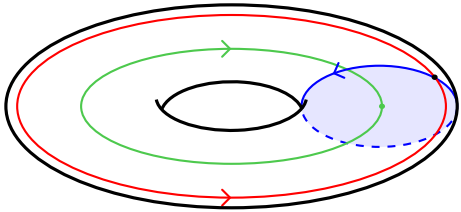

We conclude this subsection with some examples of $C$-branched $G$-covers.  We first explain how we shall describe the covers.  Lemma \ref{l:extension} says we can specify a well-defined $C$-branched $G$-cover of $M$ by specifying $K$, a trivialization of $N(K)$, and a $G$-cover of $M \setminus K$ satisfying the $C$-branched condition.  We describe the $G$-cover of $M \setminus K$ by picking a basepoint and fixing a homomorphism $\pi_1(M \setminus K) \to G$.

When $M$ is 2-dimensional, $K$ is simply a collection of points, which we will call \emph{branch points}.  The disk bundle $N(K)$ is the union of the unit tangent disks at $K$.  Up to oriented equivalence, a trivialization of a 2-disk bundle over a point is just an orientation of the disk.  We conclude that if $M$ is oriented, we can specify a framing of $K$ by labelling each branch point in $K$ with a sign $+$ or $-$ to indicate whether the trivialization of the normal disk agrees with the orientation of $M$ or not.

Similar remarks apply when $M$ is oriented and 3-dimensional.  In this case, $K$ is a link in $M$, and we can specify a framing of $K$ by assigning an integer to each component.  In practice, when drawing figures, we will always use the blackboard framing, which is determined by an orientation.

Figure \ref{f:trivializations} provides two different $C$-branched $G$-covers of $S^2$ with the same unbranched $G$-covers over $S^2 \setminus K$.  These examples show how a $C$-branched $G$-cover depends on the choice of framing.

Figure \ref{f:torus} shows that every $C$-torus is null-cobordant when considered as a $C$-branched $G$-cover of $S^1 \times S^1$.  In fact, we can generalize this example.

\begin{lemma}
If $C$ generates $G$, then every unbranched $G$-cover of a closed, oriented surface is cobordant to a branched cover of $S^2$.
\label{l:unbranched}
\end{lemma}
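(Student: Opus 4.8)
The plan is to destroy the genus one handle at a time by a sequence of \emph{branched surgeries}, each of which eliminates a handle at the price of finitely many new branch points whose labels are read off from a factorization of the old monodromy into elements of $C$.  The one place where the hypothesis enters is group-theoretic: since $G$ is finite, every $c\in C$ satisfies $c^{-1}=c^{\ord(c)-1}$, so a subset that generates $G$ as a group generates it as a monoid.  Hence every $x\in G$ can be written $x=c_1c_2\cdots c_r$ with all $c_j\in C$ (the empty product handling $x=1$).

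Here is the elementary move.  Let $\Sigma$ be a closed, connected, oriented surface of genus $g\ge 1$ carrying a $C$-branched $G$-cover with finite branch locus and monodromy homomorphism $\phi$.  Choose a non-separating simple closed curve $\gamma\subseteq\Sigma$ (one exists because $g\ge 1$) disjoint from the branch points, set $x=\phi(\gamma)$, and fix a factorization $x=c_1\cdots c_r$ with $c_j\in C$.  Build the cobordism $W$ by attaching a three-dimensional $2$-handle $h=D^2\times D^1$ to $\Sigma\times I$ along an annular neighborhood $\gamma\times D^1\subseteq\Sigma\times\{1\}$; then $\partial W=\Sigma\sqcup\Sigma'$, where $\Sigma'$ is $\Sigma$ surgered along $\gamma$, a closed connected oriented surface of genus $g-1$.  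Put $r$ disjoint straight arcs $\{z_1\}\times D^1,\dots,\{z_r\}\times D^1$ inside $h$, running between the two new capping disks, with their blackboard framings.  Then $h$ minus these arcs is $(D^2\setminus\{z_1,\dots,z_r\})\times D^1$, with fundamental group free on the meridians $\mu_1,\dots,\mu_r$, and the attaching curve $\gamma\simeq\partial D^2$ is homotopic there to $\mu_1\cdots\mu_r$.  Thus $\mu_j\mapsto c_j$ extends the monodromy over $h$; it agrees with $\phi$ on the overlap (both send $\gamma\mapsto x$) and satisfies the $C$-branched condition on the new arcs ($\mu_j\mapsto c_j\in C$), while the arcs swept out by the old branch points keep their labels.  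By \Lem{l:extension} this data assembles to a $C$-branched $G$-cover of $W$ restricting to the original cover on $\Sigma$ and to a $C$-branched $G$-cover of the genus-$(g-1)$ surface $\Sigma'$.

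Applying the move $g$ times and composing the cobordisms exhibits the given unbranched $G$-cover of $\Sigma_g$ as cobordant to a $C$-branched $G$-cover of $\Sigma_0=S^2$, as claimed.  (If the base is disconnected, run this on each component; and surjectivity of $\phi$ is never needed, since the factorization $x=c_1\cdots c_r$ may legitimately use elements of $C$ outside the image of $\phi$, which only enlarges the monodromy group of $W$.)  As a consistency check, on a $C$-torus the move surgers the generator sent into $C$, with $r=1$, producing exactly the two-branch-point cover of $S^2$ on the left of \Fig{f:trivializations}, in agreement with the null-cobordism of \Fig{f:torus}.

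The only real work I anticipate is bookkeeping: checking that the blackboard framings of the new arcs extend coherently over $W$ --- this is precisely the normal-bundle triviality issue discussed just before \Lem{l:extension}, so it presents no difficulty --- and keeping track of basepoints and of the order of the factors $c_j$ when one identifies $\gamma$ with $\mu_1\cdots\mu_r$, where conjugacy-invariance of $C$ leaves enough room to absorb any ambiguity.  A faster but less self-contained argument, available once $BG_C$ has been constructed, observes that $C$ generating $G$ makes $BG_C$ simply connected --- any two unbranched $G$-covers of $S^1$ are concordant through an annulus whose branch points realize a $C$-factorization of their difference --- so Hurewicz gives $H_2(BG_C)\cong\pi_2(BG_C)$; since two-dimensional oriented bordism computes homology, every class, in particular the one carried by our cover, is represented by a map $S^2\to BG_C$, \ie by a branched cover of $S^2$.
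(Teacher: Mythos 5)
Your primary construction — attach a $3$-dimensional $2$-handle along a nonseparating curve and seed it with branch arcs realizing a $C$-factorization of the attaching curve's monodromy, invoke \Lem{l:extension}, and iterate — is correct and is essentially the same handle-surgery argument the paper gives, with your one-curve-at-a-time iteration being a cosmetic variant of the paper's handle-at-a-time surgery. Your remark that finiteness of $G$ is what upgrades ``$C$ generates as a group'' to ``$C$ generates as a monoid'' usefully makes explicit a point the paper leaves tacit (and explains why the lemma, stated for a discrete group, is harmless in the finite setting actually used). The alternative you sketch at the end is a genuinely different route: with $\langle C\rangle = G$, \Lem{l:pi1} gives $\pi_1(BG_C)=1$, so Hurewicz together with the identification of $H_2$ with oriented $2$-bordism exhibits the class of any cover as carried by a map $S^2\to BG_C$, hence, by \Thm{th:brand}, by a branched cover of $S^2$. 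That second argument mirrors the Hopf--Whitney step in the paper's proof of Proposition~\ref{p:stable} and is shorter once $BG_C$ is in hand, at the cost of being less constructive.
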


\begin{proof}
We begin by showing that every unbranched $G$-cover of a torus with boundary is cobordant to a $C$-branched $G$-cover of the disk $D^2$.  Consider Figure \ref{f:unbranched}.  Suppose the blue $\alpha$ curve is mapped to $a \in G$ and the red $\beta$ curve is mapped to $b \in G$.  This describes an unbranched $G$-cover of the torus with boundary.  We glue on a $C$-branched $G$-cover of a 2-handle as indicated in the figure.  The branch locus $K$ is given by the green curves, and the framing of $K$ is understood to be the blackboard framing determined by the orientations of the curves.  Send the loops $\gamma_i$ to any $x_i \in C$ so that
\[ x_k\cdots x_1=a, \]
and send the $\delta_i$ to $y_i$ in $C$ so that
\[ y_l \cdots y_1=b. \]
This construction can now be used to construct the desired cobordism.

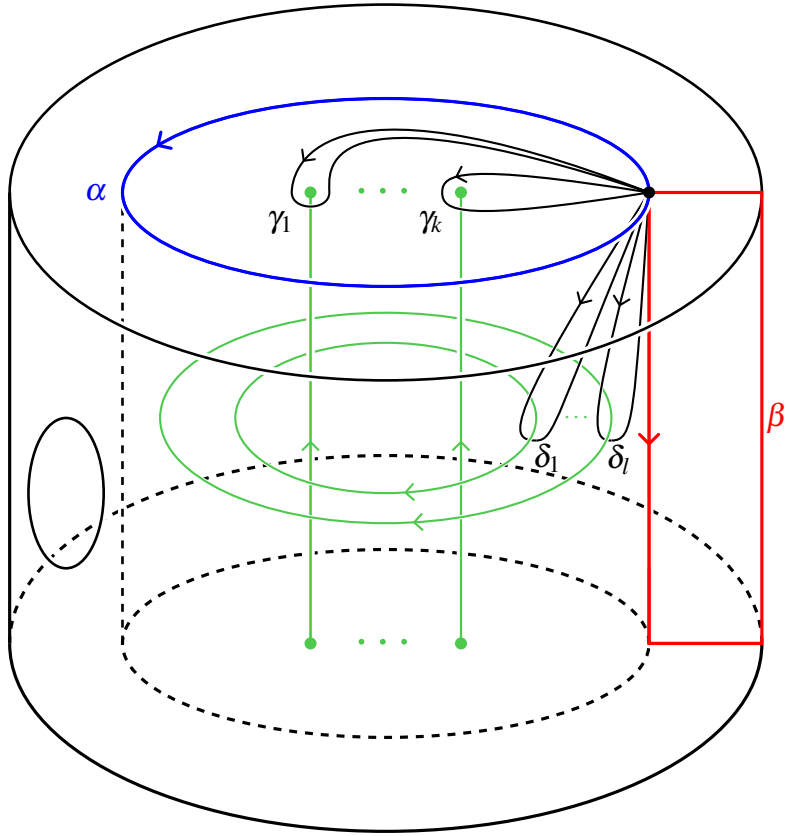
\begin{figure*}
\begin{tikzpicture}[decoration={markings,mark=at position 0.45 with {\arrow{angle 90}}}]
\foreach \i in {1,2,3}
	{
\coordinate (k\i) at ($(-2,0)+\i*(1,0)$);
}
\coordinate (p) at (3.5,6);
\draw[very thick] (5,0) arc [x radius = 5, y radius = 2.5, start angle = 360, end angle = 180];
\draw[very thick,dashed] (5,0) arc [x radius = 5, y radius = 2.5, start angle = 0, end angle = 180];
\draw[very thick, white,double=black,double distance=1,dashed] (-3.5,0) -- (-3.5,6);
\draw[very thick,dashed] (0,0) circle [x radius = 3.5, y radius=1.25];
\draw[very thick] (0,6) circle [x radius = 3.5, y radius=1.25];

\foreach \i in {1,2,3}
	{
\coordinate (c\i) at ($(1.5,3)+\i*(0.5,0)$);
}

\draw[medgreen,thick] (c1) arc [x radius = 2, y radius = 1, start angle = 0, end angle=180];
\draw[medgreen,thick] (c3) arc [x radius = 3, y radius = 1.4, start angle = 0, end angle=180];

\foreach \i in {1,3}
	{
\draw[very thick, white,double=green,double distance=0.8] (k\i) -- ($(k\i)+(0,6)$);
\draw[thick,medgreen,postaction={decorate}] (k\i) -- ($(k\i)+(0,6)$);
}

\draw[ultra thick,white] (p) .. controls ($(c1) - (.4,0)$) and ($(c1) - (.4,.3)$) .. ($(c1) - (0,.3)$);
\draw[ultra thick,white] (p) .. controls ($(c3) - (.3,0)$) and ($(c3) - (.3,.3)$) .. ($(c3) - (0,.3)$);
\draw[thick,postaction={decorate}] (p) .. controls ($(c1) - (.4,0)$) and ($(c1) - (.4,.3)$) .. ($(c1) - (0,.3)$);
\draw[thick,postaction={decorate}] (p) .. controls ($(c3) - (.3,0)$) and ($(c3) - (.3,.3)$) .. ($(c3) - (0,.3)$);

\draw[white, ultra thick] (c1) arc [x radius = 2, y radius = 1, start angle = 0, end angle=-180];
\draw[white, ultra thick] (c3) arc [x radius = 3, y radius = 1.4, start angle = 0, end angle=-180];
\draw[medgreen,thick,postaction={decorate}] (c1) arc [x radius = 2, y radius = 1, start angle = 0, end angle=-180];
\draw[medgreen,thick,postaction={decorate}] (c3) arc [x radius = 3, y radius = 1.4, start angle = 0, end angle=-180];

\draw[very thick, white,double=red,double distance=1] (p) -- (3.5,1);
\draw[ultra thick,white] (p) .. controls ($(c1) + (.25,0)$) and ($(c1) + (.3,-.3)$) .. ($(c1) - (0,.3)$);
\draw[ultra thick,white] (p) .. controls ($(c3) + (.3,0)$) and ($(c3) + (.3,-.3)$) .. ($(c3) - (0,.3)$);
\draw[thick] (p) .. controls ($(c1) + (.25,0)$) and ($(c1) + (.25,-.3)$) .. ($(c1) - (0,.3)$);
\draw[thick] (p) .. controls ($(c3) + (.3,0)$) and ($(c3) + (.3,-.3)$) .. ($(c3) - (0,.3)$);

\draw[very thick, white,double=black,double distance=1] (-4.25,2) circle [x radius = 0.5, y radius=1];
\draw[very thick, white,double=black,double distance=1] (0,6) circle [x radius = 5, y radius=2.5];
\draw[very thick,red,postaction={decorate}] (p) -- (3.5,0) -- (5,0);
\draw[very thick, white,double=blue,double distance=1] (p) arc [x radius=3.5,y radius=1.25, start angle = 0, end angle=360];
\draw[very thick,blue,postaction={decorate}] (p) arc [x radius=3.5,y radius=1.25, start angle = 0, end angle=360];
\draw[very thick, white,double=black,double distance=1] (0,6) circle [x radius = 5, y radius=2.5];
\draw[very thick] (-5,0) -- (-5,6);
\draw[very thick,red] (3.5,1) --(3.5,0) -- (5,0) -- (+5,6) --(p);

\draw[ultra thick, white] (p) .. controls ($(k1)+(0,1.75)+(0,6)$) and ($(k1)+(-.25,.25)+(0,6)$) .. ($(k1)-(.25,0)+(0,6)$) .. controls ($(k1)+(-.25,-.25)+(0,6)$) and ($(k1)+(.25,-.25)+(0,6)$) .. ($(k1)+(.25,0)+(0,6)$) .. controls ($(k1)+(.25,.25)+(0,6)$) and ($(k1)+(0,1.5)+(0,6)$) .. (p);
\draw[ultra thick,white] (p) .. controls ($(k3)+(0,0.4)+(0,6)$) and ($(k3)+(-.25,.25)+(0,6)$) .. ($(k3)-(.25,0)+(0,6)$) .. controls ($(k3)+(-.25,-.25)+(0,6)$) and ($(k3)-(0,0.4)+(0,6)$) .. (p);
\draw[thick, postaction={decorate}] (p) .. controls ($(k1)+(0,1.75)+(0,6)$) and ($(k1)+(-.25,.25)+(0,6)$) .. ($(k1)-(.25,0)+(0,6)$) .. controls ($(k1)+(-.25,-.25)+(0,6)$) and ($(k1)+(.25,-.25)+(0,6)$) .. ($(k1)+(.25,0)+(0,6)$) .. controls ($(k1)+(.25,.25)+(0,6)$) and ($(k1)+(0,1.5)+(0,6)$) .. (p);
\draw[thick, postaction={decorate}] (p) .. controls ($(k3)+(0,0.4)+(0,6)$) and ($(k3)+(-.25,.25)+(0,6)$) .. ($(k3)-(.25,0)+(0,6)$) .. controls ($(k3)+(-.25,-.25)+(0,6)$) and ($(k3)-(0,0.4)+(0,6)$) .. (p);

\foreach \i in {1,3}
	{
\fill[medgreen] (k\i) circle[radius=0.08];
\fill[medgreen] ($(k\i)+(0,6)$) circle[radius=0.08];
}
\draw ($(k1)+(-0.4,5.6)$) node {\large $\gamma_1$};
\draw ($(k3)+(-.4,5.6)$) node {\large $\gamma_k$};
\draw ($(c1)+(.15,-.525)$) node {\large $\delta_1$};
\draw ($(c3)+(.1,-.55)$) node {\large $\delta_l$};
\draw[blue] (-3.85,6) node {\large $\alpha$};
\draw[red] (5.2,3) node {\large $\beta$};

\draw[medgreen] (k2) node {\huge $\cdots$};
\draw[medgreen] ($(c2)+(.05,0)$) node {$\cdots$};
\draw[medgreen] ($(k2)+(0,6)$) node {\huge $\cdots$};
\fill (p) circle [radius=0.08];

\end{tikzpicture}
\caption{Surgering a $G$-handle.  The black circle on the left of the figure is a boundary component.}
\label{f:unbranched}
\end{figure*}

To prove the lemma, just do the above on every handle.  Precisely, let $\alpha_1,\dots,\alpha_g$ be a maximal collection of non-isotopic, disjoint, essential simple closed curves on $\Sigma_g$, and glue a branched cover of a 2-handle along each $\alpha_i$ as in the previous paragraph.
\end{proof}

\subsection{Classifying space}
\label{ss:classifying}
We now construct a classifying space $BG_C$ such that homotopy classes of maps from $M$ to $BG_C$ are naturally bijective with concordance classes of $C$-branched $G$-covers of $M$.  The first such constructions were provided by Brand \cite{Brand:branched}, who considered the case of irregular $n$-sheeted branched covers with various restrictions on the branch types.  The covers he considers can be identified with regular branched $G$-covers where $G=S_n$, and $BG_C$ recovers Brand's constructions in this case.  The more general construction we use was first described, to our knowledge, by Ellenberg, Venkatesh and Westerland in \cite{EVW:hurwitz2}, where they use the notation $A(G,C)$ for $BG_C$.  Since \cite{EVW:hurwitz2} was never published, and they do not provide the proof that $BG_C$ is a classifying space, we do so here.

Let $LBG = \Maps(S^1,BG)$ be the loop space of $BG$, which is the space of all maps from the circle $S^1$ into $BG$ with the compact-open topology.  The set of components of $LBG$ is the space of free homotopy classes of maps $S^1 \to BG$, which after orienting $S^1$ can be naturally identified with the set of conjugacy classes $G\sslash G$.  Let $L^CBG$ be the union of components of $LBG$ corresponding to $C\sslash G$.  Then
\[ BG_C \defeq BG \bigsqcup_{ev} L^CBG \times D^2, \]
where
\[ ev: L^CBG \times S^1 \to BG \]
is the evaluation map.  Intuitively, for every loop in $BG$ representing a free homotopy class in $C\sslash G$, we glue a disk to $BG$ so that the boundary of the disk is sent to that loop.  Our next theorem formalizes what we mean when we say that $BG_C$ is a classifying space.

\begin{theorem}
Fix a discrete group $G$ and a conjugacy invariant subset $C \subseteq G$.  Let $M$ be a smooth manifold.  Then homotopy classes of maps from $M$ to $BG_C$ are in natural bijection with concordance classes of $C$-branched $G$-covers of $M$.
\label{th:brand}
\end{theorem}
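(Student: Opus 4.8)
The plan is to produce mutually inverse, natural assignments between concordance classes of $C$-branched $G$-covers of $M$ and homotopy classes of maps $M \to BG_C$.

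\emph{Covers to maps.} Suppose we are given a $C$-branched $G$-cover of $M$ with framed branch locus $K$ and regular neighborhood $N(K) \cong K \times D^2$. The unbranched cover over $M \setminus K$ is classified by a map $g\colon M \setminus N(K)^\circ \to BG$; the cover it induces on $\partial N(K) = K \times S^1$ is classified by a map whose adjoint $\tilde\ell\colon K \to LBG$ sends each $k$ to the monodromy loop around $\{k\}\times\partial D^2$, and the $C$-branched condition forces $\tilde\ell$ to land in $L^CBG$. After a homotopy I may assume $g$ restricts on $\partial N(K)$ to $ev\circ(\tilde\ell\times\mathrm{id})$. I then extend $g$ across $N(K)=K\times D^2$ by $(k,z)\mapsto(\tilde\ell(k),z)\in L^CBG\times D^2\subseteq BG_C$. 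The two pieces agree on $\partial N(K)$ by construction, so they assemble to a map $M\to BG_C$, well defined up to homotopy; applying the same recipe to a concordance over $M\times I$ shows concordant covers give homotopic maps.

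\emph{Maps to covers.} Let $Z := L^CBG\times\{0\}\subseteq BG_C$; it has the trivial $D^2$-bundle $L^CBG\times D^2$ as a tubular neighborhood, and $BG_C\setminus Z$ deformation retracts onto $BG$ by collapsing each punctured disk $\{\ell\}\times(D^2\setminus 0)$ onto its boundary circle. Given $f\colon M\to BG_C$, a homotopy makes $f$ smooth and transverse to $Z$ --- meaning the composite of $f$ with the projection $L^CBG\times D^2\to D^2$, defined near $f^{-1}(L^CBG\times D^2)$, is transverse to $0\in D^2$; since only the two normal directions are constrained, $\dim M<\infty$ suffices even though $L^CBG$ is infinite-dimensional. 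Then $K:=f^{-1}(Z)$ is a properly embedded codimension-$2$ submanifold, the $D^2$-coordinate pulls back to a framing of $N(K)$, the restriction $f|_{M\setminus K}$ factors up to homotopy through $BG$ and hence determines a regular $G$-cover of $M\setminus K$, and a meridian disk of $K$ maps to some $\{\ell\}\times D^2$ whose boundary $ev$ sends to a free homotopy class $\ell\in C\sslash G$, so the $C$-branched condition holds. \Lem{l:extension} assembles this data into a $C$-branched $G$-cover of $M$, unique up to equivalence, and running the construction on a homotopy $M\times I\to BG_C$ shows homotopic maps give concordant covers.

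\emph{Inverse and naturality.} That the two assignments are inverse is unwinding of definitions: one composite is pinned down by the uniqueness clause of \Lem{l:extension}, the other by uniqueness of classifying maps to $BG$ together with contractibility of the disk fibers. Naturality in $M$ is immediate, since pulling a cover back along $h\colon M'\to M$ pulls back $K$, its framing, and the classifying maps, matching precomposition of the $BG_C$-valued map with $h$. I expect the transversality step to be the main obstacle, since $BG_C$ is not a manifold and $L^CBG$ is infinite-dimensional; the cleanest remedy is the intrinsic formulation through the $D^2$-projection above --- essentially a Pontryagin--Thom/Thom-collapse argument --- or passing to a finite-dimensional model for $BG$. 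A secondary technical point, already encountered in the proof of \Lem{l:extension}, is arranging all the gluings to be smooth near the collars.
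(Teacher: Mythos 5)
Your proposal follows essentially the same two-way construction as the paper: covers to maps via the classifying map of the unbranched cover on $M\setminus N(K)^\circ$, glued to the product map $N(K)\cong K\times D^2\to L^CBG\times D^2$ coming from the boundary monodromy loops; and maps to covers by making the $D^2$-projection smooth near the glued-in piece, taking the preimage of a regular value/center as $K$, pulling back the framing, and assembling with Lemma~\ref{l:extension}. The transversality caveat you raise about $L^CBG$ being infinite-dimensional is addressed in the paper exactly as you suggest, by applying Whitney approximation and Sard only to the composite with the finite-dimensional $D^2$ projection.
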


\begin{proof}
Let $\phi: M \to BG_C$ be a continuous map.  Fix $0<\e<1$ and let $D_\e(0) \subseteq D^2$ be a small disk of radius $\e$ centered at $0$.   Let $\pi: L^CBG \times D^2 \to D^2$ be the projection map.  The Whitney approximation theorem (see \cite[Thm.~6.19]{Lee:smooth}) implies that after applying some homotopy to $\phi$, we can guarantee that
\[ \pi\circ\phi: (\pi\circ\phi)^{-1}(D_\e(0))  \to D_\e(0) \]
is smooth.  By Sard's theorem, $\pi\circ\phi$ has a regular value $z \in D_\e(0)$.  The preimage $\phi^{-1}(L^CBG \times \{z\})$ is a codimension two submanifold $K \subseteq M$.  Since $N(K)$ is the pullback of the normal bundle of $z$ inside $D^2$, which is trivial, we get a trivialization of $N(K)$.  Note that 
\[ BG_C \setminus L^CBG \times \{z\} \]
is homotopy equivalent to $BG$.  Thus
\[ \phi: M \setminus K \to BG_C \setminus L^CBG \times \{z\} \]
classifies some $G$-cover of $M \setminus K$.  Clearly this cover satisfies the $C$-branched condition.  Applying Lemma \ref{l:extension}, this $G$-cover of $M \setminus K$ and the trivialization of $N(K)$ assemble to form a $C$-branched $G$-cover over $M$ with branch locus $K$.

Let $\phi$ and $\psi$ be continuous maps $M \to BG_C$, and let
\[ \Phi: M \times [0,1] \to BG_C \]
be a homotopy from $\phi$ to $\psi$.  Without loss of generality, we assume that there is an $0<\e<1$ so that
\[ \pi\circ\phi: (\pi\circ\phi)^{-1}(D_\e(0))  \to D_\e(0) \]
and
\[ \pi\circ\psi: (\pi\circ\psi)^{-1}(D_\e(0))  \to D_\e(0) \]
are both smooth.  Homotope $\Phi$ rel $M \times \{0,1\}$ so that
\[ \pi\circ\Phi: (\pi\circ\Phi)^{-1}(D_\e(0))  \to D_\e(0) \]
is smooth, and construct a $C$-branched $G$-cover of $M \times [0,1]$ as in the previous paragraph.  The resulting cover is a concordance between the covers associated to $\phi$ and $\psi$.  Thus the homotopy class of $\phi$ yields a well-defined concordance class of $C$-branched $G$-covers of $M$.

Conversely, let $\tilde{M}$ be a $C$-branched $G$-cover of $M$ with branch locus $K$.  Then there is a map $\phi: M \setminus N(K)^\circ \to BG$ classifying the $G$-cover of $M\setminus N(K)^\circ$.  The trivialization of $N(K)$ is equivalent to a map $N(K) \to D^2$ that takes $K$ to 0 and is a diffeomorphism on the fibers of $N(K)$.  Define a map $N(K) \to L^CBG$ by sending a point in $N(K)$ to the loop in $BG$ determined by $\phi$ and the boundary of the fiber that point lives in (the parametrization of the loop is determined by the trivialization).  The product of these maps from $N(K)$ is a map $N(K) \to L^CBG \times D^2$ that agrees with $\phi$ on $\partial N(K)$ when composed with the evaluation map $L^CBG \times S^1 \to BG$.  Thus $\tilde{M}$ determines a map $M \to BG_C$.  Applying this construction to a concordance between two branched covers of $M$ yields a homotopy between the corresponding maps to $BG_C$.
\end{proof}

It is interesting to consider how to modify the construction of $BG_C$ so that framings are no longer induced by maps to $BG_C$.  We expect that instead of gluing a copy of $D^2$ to $BG$ for every map $S^1 \to BG$ in the conjugacy class of $c \in G$, one should glue a copy of a cone over the classifying space $B\langle c \rangle$ of the cyclic group generated by $c$ for every map $B\langle c \rangle \to BG$ in the appropriate homotopy class.  One might call this new space $BG_C^{unfr}$.  Since, \emph{a priori}, the statement of Theorem \ref{th:main} is not concerned with framings, it is reasonable to ask for a proof that does not unnecessarily introduce framings, and, hence, a more thorough development of $BG_C^{unfr}$.  However, this would require more algebraic topology than we want to invoke, and so, for expediency's sake, we will not pursue such a direction in this paper; instead, we employ a geometric technique to circumvent framings in Section \ref{ss:dilation}.

\subsection{Invariants of $BG_C$}
\label{ss:homology}
Let us compute some basic topological invariants of $BG_C$.

First we consider $L^CBG$.  Clearly
\[ |\pi_0(L^CBG)| = |C\sslash G|. \]
A point $p$ in $L^{\overline{c}}BG$ is a loop in the free homotopy class $\overline{c}$.  A loop in $L^{\overline{c}}BG$ with basepoint $p$ is a map from a torus $S^1 \times S^1$ to $BG$ which restricts to $p$ on the meridian $S^1 \times \{1\}$.  If we fix a basepoint of $BG$, then we can pick $p$ so that its \emph{pointed} homotopy class represents $c \in \overline{c}$.  A little more work shows

\begin{lemma}
Let $G$ be a discrete group and let $\overline{c} \in G\sslash G$ be a conjugacy class in $G$ with representative $c$.  Then $\pi_1(L^{\overline{c}}BG) \cong Z_G(c)$, the centralizer of $c$ in $G$.  In particular, $H_1(L^{\overline{c}}BG) \cong Z_G(c)_{ab}$.   \qed
\label{l:LBG}
\end{lemma}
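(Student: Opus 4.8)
The plan is to identify $L^{\overline{c}}BG$, up to homotopy, with the connected covering space of $BG$ corresponding to the subgroup $Z_G(c)\le G$---equivalently, to show that $L^{\overline{c}}BG$ is a $K(Z_G(c),1)$---by analyzing the evaluation fibration of the free loop space. Fix a CW model for $BG=K(G,1)$ and consider the fibration $\mathrm{ev}\colon LBG\to BG$, $\gamma\mapsto\gamma(1)$, restricted to the single path component $L^{\overline{c}}BG$ (restriction to a union of path components of the total space is again a fibration). The fiber over the basepoint is the set of based loops in $BG$ whose free homotopy class is $\overline{c}$; since a based loop $\gamma$ has free homotopy class $\overline{[\gamma]}$, this fiber is the disjoint union $F\defeq\coprod_{g\in\overline{c}}\Omega_gBG$ of those path components of $\Omega BG$ indexed, via $\pi_0(\Omega BG)\cong\pi_1(BG)=G$, by the elements $g\in\overline{c}$. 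Because $BG$ is aspherical, $\pi_n(\Omega_gBG)\cong\pi_{n+1}(BG)=0$ for $n\ge 1$, so $F$ is homotopy discrete: $F\simeq\overline{c}$ as a set.

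Next I would run the long exact sequence of the fibration $F\to L^{\overline{c}}BG\xrightarrow{\mathrm{ev}}BG$, based at a point $p\in\Omega_cBG\subseteq F$ representing $c$. Since $\pi_n(F)=0$ for $n\ge 1$, exactness forces $\mathrm{ev}_*\colon\pi_1(L^{\overline{c}}BG,p)\to\pi_1(BG)=G$ to be injective with image $\partial^{-1}(c)$, where $\partial\colon\pi_1(BG)\to\pi_0(F)=\overline{c}$ is the connecting map. By definition $\partial(g)=g\cdot[p]$ for the monodromy action of $\pi_1(BG)$ on $\pi_0(F)\subseteq\pi_0(\Omega BG)=G$; for the free loop fibration this action is conjugation, so $\partial(g)=gcg^{-1}$ and hence $\partial^{-1}(c)=Z_G(c)$. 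This yields $\pi_1(L^{\overline{c}}BG)\cong Z_G(c)$. The same exact sequence, using $\pi_n(BG)=0$ for $n\ge 2$ and $\pi_n(F)=0$ for $n\ge 1$, gives $\pi_n(L^{\overline{c}}BG)=0$ for $n\ge 2$, so $L^{\overline{c}}BG$ is a $K(Z_G(c),1)$; consequently $H_1(L^{\overline{c}}BG)\cong H_1(BZ_G(c))\cong Z_G(c)_{ab}$, as claimed.

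This matches the torus picture described just above: a class in $\pi_1(L^{\overline{c}}BG,p)$ is, adjointly, a map $S^1\times S^1\to BG$ restricting to $p$ on the meridian, and $\mathrm{ev}_*$ records the image of the longitude, which must commute with $c$ because $\pi_1(S^1\times S^1)$ is abelian---this is the easy containment in $Z_G(c)$. The content of the ``little more work'' is the surjectivity and injectivity (every element of $Z_G(c)$ is realized, and no nontrivial longitude-class dies through a based homotopy rel the meridian), and the fibration argument disposes of both at once. The one step I expect to need genuine (if mild) care is the identification of the monodromy of the evaluation fibration with the conjugation action on $\pi_0(\Omega BG)$, proved by dragging a based loop once around a loop in the base; a minor bookkeeping point is that $\partial$ is only a map of pointed sets, but exactness at $\pi_1(BG)$ still literally asserts $\mathrm{im}(\mathrm{ev}_*)=\partial^{-1}(c)$, which is all that is used.
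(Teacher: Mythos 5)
Your proof is correct and complete. The paper itself gives no proof of this lemma: the \verb+\qed+ follows the statement immediately, and the surrounding text only sketches the torus picture (a loop in $L^{\overline{c}}BG$ based at $p$ is adjointly a torus in $BG$ restricting to $p$ on the meridian) and then defers the stronger assertion that $L^{\overline{c}}BG$ is a $K(Z_G(c),1)$ to a MathOverflow reference, remarking that this stronger fact ``will not be used.'' Your argument via the evaluation fibration $\Omega BG \to LBG \to BG$, restricted to the component $L^{\overline{c}}BG$, is the standard way to fill in the ``little more work'' the paper gestures at. It establishes in one stroke both the easy containment $\im(\mathrm{ev}_*)\subseteq Z_G(c)$---which is the content of the torus observation, since the longitude must commute with the meridian class---and the reverse inclusion together with injectivity of $\mathrm{ev}_*$, and as a bonus it proves the $K(\pi,1)$ statement directly rather than citing it. The one step that deserves the care you flag is identifying the connecting map $\partial\colon \pi_1(BG)\to\pi_0(\Omega BG)\cong G$ with (the orbit map of) the conjugation action; whether the formula comes out $gcg^{-1}$ or $g^{-1}cg$ depends on orientation conventions for the monodromy, but either way $\partial^{-1}([p]) = Z_G(c)$, which is all the argument needs.
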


In fact, $L^{\overline{c}}BG$ is a $K(Z_G(c),1)$ (see, for instance, \cite{someone}). Thus the homology of $L^{\overline{c}}BG$ is the same as the group homology of $Z_G(c)$.  We will not use this fact, rather we simply state it so that the following lemma may be interpreted as a satisfying calculation of $H_n(BG_C)$.

\begin{lemma}
Let $G$ be a discrete group and let $C \subseteq G$ be a conjugacy invariant subset.  Then the homology of $BG_C$ fits into an exact sequence
\[\begin{aligned}
\cdots \to H_{n-1}(L^CBG) &\to H_n(BG) \to \\ &H_n(BG_C) \to H_{n-2}(L^CBG) \to \cdots. \end{aligned}\]
\label{l:exact}
\end{lemma}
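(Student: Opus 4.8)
My plan is to realize $BG_C$ as $BG$ with a copy of $L^CBG\times D^2$ attached along $L^CBG\times S^1$, and then run the long exact sequence of the pair $(BG_C, BG)$. By construction $BG_C$ is the pushout of $BG \xleftarrow{ev} L^CBG\times S^1 \hookrightarrow L^CBG\times D^2$. Since $S^1\hookrightarrow D^2$ is a cofibration, so is $L^CBG\times S^1 \hookrightarrow L^CBG\times D^2$ (products of cofibrations with a fixed space are cofibrations), and hence so is its pushout $BG\hookrightarrow BG_C$. In particular $(BG_C, BG)$ is a good pair, so $H_*(BG_C,BG)\cong \tilde H_*(BG_C/BG)$.

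The key observation is that the quotient $BG_C/BG$ does not depend on the evaluation map at all: collapsing $BG$ to a point also collapses the image $ev(L^CBG\times S^1)\subseteq BG$, so
\[ BG_C/BG \;\cong\; (L^CBG\times D^2)\big/(L^CBG\times S^1), \]
which is the Thom space of the trivial oriented plane bundle over $L^CBG$, i.e.\ the double suspension $\Sigma^2\big((L^CBG)_+\big)$. Hence I can compute
\[ H_n(BG_C, BG)\;\cong\;\tilde H_n\big(\Sigma^2((L^CBG)_+)\big)\;\cong\;H_{n-2}(L^CBG), \]
either from the relative Künneth theorem — $H_*(D^2,S^1)$ is free and concentrated in degree $2$ with value $\Z$, so all Tor and higher tensor corrections vanish — or equivalently from the Thom isomorphism.

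Feeding this into the long exact sequence of the pair,
\[ \cdots\to H_n(BG)\to H_n(BG_C)\to H_n(BG_C,BG)\xrightarrow{\ \partial\ }H_{n-1}(BG)\to\cdots, \]
and substituting $H_k(BG_C, BG)\cong H_{k-2}(L^CBG)$ in each term, I get exactly the asserted sequence, with the unlabeled arrow $H_n(BG)\to H_n(BG_C)$ induced by the inclusion $BG\hookrightarrow BG_C$.

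I expect the whole argument to be mostly bookkeeping; the step needing the most care is the identification of $H_*(BG_C,BG)$, namely checking that $BG\hookrightarrow BG_C$ really is a cofibration — so that the pair is good and $ev$ genuinely drops out of the quotient — and that $L^CBG$, which is not obviously a CW complex, is nonetheless well-behaved enough for these tools; this is fine since $LBG$ has the homotopy type of a CW complex, and in any case $(D^2,S^1)$ is a cofibration pair and this property is preserved by products and pushouts. One further point, not needed for the statement itself but relevant to the later extraction of $M(G)_C$ from the $n=2$ terms, is to identify the connecting homomorphism $H_{n-1}(L^CBG)\to H_n(BG)$ with the transgression of the evaluation fibration; this is where $ev$ re-enters and requires unwinding the explicit pushout. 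An alternative derivation is Mayer--Vietoris applied to open thickenings of $BG$ and of $L^CBG\times D^2$, but that introduces spurious $H_*(L^CBG)$ summands coming from Künneth on $L^CBG\times S^1$ which then have to be cancelled, so the relative-homology route seems cleaner.
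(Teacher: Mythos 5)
Your proof is correct but takes a genuinely different route from the paper's. The paper decomposes $BG_C$ into two open pieces, $BG \sqcup_{ev} L^CBG\times(D^2\setminus\{0\})$ (deformation retracting to $BG$) and $L^CBG\times D_\epsilon(0)$ (deformation retracting to $L^CBG$), with intersection $\simeq L^CBG\times S^1$, and applies Mayer--Vietoris. It then uses Künneth to split $H_n(L^CBG\times S^1)\cong H_{n-1}(L^CBG)\oplus H_n(L^CBG)$, observes that the projection component $j_*$ kills the first summand, and performs the cancellation-of-graph manipulation to remove the extra $H_*(L^CBG)$ terms from the sequence. You instead run the long exact sequence of the pair $(BG_C,BG)$, justify that the inclusion is a cofibration (as a pushout of $L^CBG\times S^1\hookrightarrow L^CBG\times D^2$), observe that the quotient $BG_C/BG$ is the double suspension $\Sigma^2\bigl((L^CBG)_+\bigr)$ independently of the attaching map, and hence $H_n(BG_C,BG)\cong H_{n-2}(L^CBG)$. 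Your route trades the Künneth/cancellation bookkeeping for a cofibration check and a Thom-space identification; the paper's route is more hands-on but avoids point-set issues about $LBG$ and cofibrations entirely. You correctly flag at the end that the Mayer--Vietoris alternative exists and produces spurious summands to cancel — that is precisely what the paper does. One small caveat worth keeping in mind for your final remark: identifying the connecting map $H_{n-1}(L^CBG)\to H_n(BG)$ concretely (with the pushforward along $ev$ after a suspension isomorphism) is needed for the $n=2$ analysis that follows in the paper, and your phrasing ``transgression of the evaluation fibration'' should be replaced by a direct unwinding of the boundary map of the pair, since $ev$ is not being treated as a fibration here.
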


\begin{proof}
Decompose $BG_C$ into the two subspaces
\[ BG \bigsqcup_{ev} L^CBG \times (D^2 \setminus \{0\}) \]
and
\[ L^CBG \times D_\epsilon(0) \]
where $D_\epsilon(0) \subseteq D^2$ is a small disk of radius $0 < \epsilon < 1$ centered at $0$.  The former subspace deformation retracts onto $BG$, while the latter subspace deformation retracts to $L^CBG \times \{0\} \simeq L^CBG$.  The intersection of the two subspaces is homotopy equivalent to $L^CBG \times S^1$, with the inclusion
\[ i: L^CBG \times S^1 \to BG \]
the evaluation map, and the inclusion
\[ j: L^CBG \times S^1 \to L^CBG \]
the projection onto the first factor.

Consider the Mayer-Vietoris sequence
\[\begin{aligned}
\cdots \to H_n(L^CBG \times &S^1) \to H_n(L^CBG) \oplus H_n(BG) \to \\
&H_n(BG_C) \to H_{n-1}(L^CBG \times S^1) \to \cdots. \end{aligned}\]
The K\"{u}nneth formula shows
\[ H_n(L^CBG \times S^1) \cong H_{n-1}(L^CBG) \oplus H_n(L^CBG). \]
The induced map
\[ j_*: H_n(L^CBG \times S^1) \to H_n(L^CBG) \]
simply projects out $H_{n-1}(L^CBG)$.  Thus, by exactness, we can modify the last term of the sequence, yielding
\[\begin{aligned}
\cdots \to H_{n-1}(L^CBG) &\oplus H_n(L^CBG) \to H_n(L^CBG) \oplus H_n(BG) \to 
\\ &H_n(BG_C) \to H_{n-2}(L^CBG) \to \cdots. \end{aligned}\]
The image of $H_n(L^CBG)$ inside $H_n(L^CBG) \oplus H_n(BG)$ is the graph of the induced map $H_n(L^CBG) \to H_n(BG)$.  The quotient of a direct sum by the graph of a linear map is the codomain, so we get the new exact sequence
\[\begin{aligned}
\cdots \to H_{n-1}(L^CBG) &\to H_n(BG) \to \\ &H_n(BG_C) \to H_{n-2}(L^CBG) \to \cdots. \end{aligned}\]
\end{proof}

Let us analyze this exact sequence in low dimensions.  When $n=1$, we have
\[ \mathbb{Z}^{C\sslash G} \to H_1(BG) \to H_1(BG_C) \to 0. \]
Of course, $H_1(BG) = H_1(G) = G_{ab}$, the abelianization of $G$.  The map $\mathbb{Z}^{C\sslash G} \to G_{ab}$ is the evaluation map defined by taking an element of $C\sslash G$ to its image in $G_{ab}$.  We deduce
\[ H_1(BG_C) \cong (G/\langle C \rangle)_{ab}. \]
Note that because $C$ is conjugacy invariant, $\langle C \rangle$ is automatically normal.  In fact, it is a straightforward exercise to compute $\pi_1(BG_C)$.

\begin{lemma}\label{l:pi1}
For any discrete group $G$ and conjugacy invariant subset $C \subset G$, $\pi_1(BG_C) \cong G/\langle C \rangle$. \qed 
\end{lemma}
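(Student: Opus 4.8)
The plan is to compute $\pi_1(BG_C)$ with van Kampen's theorem, applied to the defining decomposition $BG_C = BG \sqcup_{ev} L^CBG \times D^2$ one conjugacy class at a time. Since $L^CBG$ has exactly one path component $L^{\overline c}BG$ for each $\overline c \in C\sslash G$, I would build $BG_C$ from $BG$ by successively attaching the ``handles'' $L^{\overline c}BG \times D^2$ along $L^{\overline c}BG \times S^1$ via $ev$. At the stage attaching the handle for $\overline c$ to the partial union $X$, cover the new space by an open neighborhood of $X$ that deformation retracts onto $X$ (push the punctured fibers $D^2 \setminus \{0\}$ radially out to $S^1$) and by the open set $L^{\overline c}BG \times (D^2 \setminus S^1) \simeq L^{\overline c}BG$; their intersection is homotopy equivalent to $L^{\overline c}BG \times S^1$. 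All four spaces are path connected --- here we use that each $L^{\overline c}BG$ is a single component of $LBG$ --- so van Kampen gives $\pi_1(\text{new}) = \pi_1(X) *_{\pi_1(L^{\overline c}BG \times S^1)} \pi_1(L^{\overline c}BG)$.

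The crucial step is to identify the two structure maps on $\pi_1$. By \Lem{l:LBG}, $\pi_1(L^{\overline c}BG) \cong Z_G(c)$, and I claim this isomorphism is realized by evaluation $L^{\overline c}BG \to BG$ at the basepoint of $S^1$: in the free loop space fibration $\Omega BG \to LBG \to BG$ the fiber $\Omega BG$ is homotopy discrete with $\pi_0(\Omega BG) = \pi_1(BG) = G$, on which $\pi_1(BG)$ acts by conjugation, and the long exact sequence identifies the image of $\pi_1(L^{\overline c}BG)$ in $\pi_1(BG) = G$ with the stabilizer $Z_G(c)$ of $c$. Choosing the basepoint $p \in L^{\overline c}BG$ so that its pointed class is $c$ (as in the discussion preceding \Lem{l:LBG}), the meridian $\{p\} \times S^1$ maps under $ev$ to the loop $p$, i.e.\ to $c$. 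Hence, identifying $\pi_1(L^{\overline c}BG \times S^1) = Z_G(c) \times \Z$ with $\Z$ generated by the meridian, the two structure maps are
\[ ev_*\colon (h,n) \longmapsto h\,c^{\,n} \in G, \qquad \mathrm{pr}_*\colon (h,n) \longmapsto h \in Z_G(c). \]

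Granting this, the amalgamation is elementary. The pushout $G *_{Z_G(c) \times \Z} Z_G(c)$ of the maps above is computed as follows: the relations coming from the elements $(h,0)$ identify the adjoined copy of $Z_G(c)$ with the subgroup $Z_G(c) \le G$, so the pushout is a quotient of $G$; the remaining relations, from $(e,n)$, impose $c = e$. So attaching the handle for $\overline c$ passes from $G$ (resp.\ the current $\pi_1$) to $G / \langle\!\langle c \rangle\!\rangle$ (resp.\ its further quotient by the image of $c$). Iterating over all $\overline c \in C\sslash G$ kills the normal closure $\langle\!\langle C \rangle\!\rangle$, which equals $\langle C \rangle$ since $C$ is conjugacy invariant; hence $\pi_1(BG_C) \cong G / \langle C \rangle$. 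As a consistency check, abelianizing recovers $H_1(BG_C) \cong (G/\langle C \rangle)_{\ab}$, already computed from \Lem{l:exact}.

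The only genuinely delicate point I anticipate is the identification of $ev_*$ on $\pi_1(L^{\overline c}BG)$ with the honest inclusion $Z_G(c) \hookrightarrow G$, together with the parallel statement that the meridian goes to $c$; everything afterward is bookkeeping with van Kampen and a one-line pushout of groups. If one prefers not to invoke the loop space fibration, these two facts can be extracted from whichever explicit model is used to prove \Lem{l:LBG}.
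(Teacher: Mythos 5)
Your proof is correct. The paper itself gives no argument for this lemma --- it is stated with a \verb|\qed| and the preceding sentence says only that it is a ``straightforward exercise'' --- so there is no author proof to compare against; your van Kampen argument is clearly the intended one. The one genuinely nontrivial point, which you correctly isolate, is that the abstract isomorphism $\pi_1(L^{\overline c}BG)\cong Z_G(c)$ of \Lem{l:LBG} is realized by the evaluation map as the honest inclusion $Z_G(c)\hookrightarrow G$, and that the meridian $\{p\}\times S^1$ evaluates to $c$. The paper states \Lem{l:LBG} only as an abstract isomorphism, so this identification (which you justify via the free loop space fibration $\Omega BG\to LBG\to BG$ with homotopy discrete fiber) is genuinely an extra input and not automatic from the lemma as stated. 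The only loose end is the phrase ``iterating over all $\overline c\in C\sslash G$'': for infinite $G$ the set $C\sslash G$ can be infinite, and one should either pass to the colimit (using that $\pi_1$ commutes with filtered colimits along cofibrations) or apply the many-pieces version of van Kampen directly to $BG$ covered together with all the $L^{\overline c}BG\times D^2$ at once; attaching one conjugacy class at a time, as you do, is exactly what keeps each open set and each intersection path connected and so is the cleanest way to avoid the disconnectedness of $L^CBG$ when $|C\sslash G|>1$.
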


Consider the exact sequence of Lemma \ref{l:exact} when $n=2$, and substitute the result of Lemma \ref{l:LBG}:
\[ \bigoplus_{\overline{c} \in C\sslash G} Z_G(c)_{ab} \to H_2(G) \to H_2(BG_C) \to \mathbb{Z}^{C\sslash G}. \]
Let $N$ be the kernel of the evaluation map
\[\mathbb{Z}^{C\sslash G} \to G_{ab}.\]
Then by the $n=1$ discussion, we can extend the sequence to
\[ \bigoplus_{\overline{c} \in C\sslash G} Z_G(c)_{ab} \to H_2(G) \to H_2(BG_C) \to N \to 0. \]

To interpret this sequence, we use the fact that oriented bordism is the same as homology in dimension 2.  Combining this with Theorem \ref{th:brand}, every element of $H_2(BG_C)$ can be represented by a $C$-branched $G$-cover of a closed, oriented surface $\Sigma$.  The map $H_2(BG_C) \to N$ takes this cover of $\Sigma$ to its \emph{homological branch type}, which algebraically counts the different conjugacy types of the branch points, with a sign that depends on whether the trivialization of the normal plane at a branch point agrees with the orientation of $\Sigma$ or not.  In particular, the homological branch type can easily be computed from the usual branch type.  (See Section \ref{ss:book} for more details on branch types.)

The image of $Z_G(c)_{ab}$ inside $H_2(G)$ can be understood as those homology classes in $H_2(G)$ that are represented by sums of tori where the meridian maps to the conjugacy class $\overline{c}$, that is, $\overline{c}$-tori.  Thus, $H_2(BG_C)$ fits into the exact sequence
\[ 0 \to M(G)_C \to H_2(BG_C) \to N \to 0, \]
where, recall from the introduction, the $C$-branched Schur multiplier of $G$ is defined as
\[ M(G)_C = H_2(G)/\langle \text{$C$-tori} \rangle. \]
Since $N \subseteq \mathbb{Z}^{C\sslash G}$ is free abelian, the sequence splits (but not naturally).  If $G$ is finite, then we can identify $M(G)_C$ with the torsion subgroup of $H_2(BG_C)$:
\[ M(G)_C \cong H_2(BG_C)_{tor}. \]

\section{Stable orbits}
\label{s:stable}
In this section we prove Theorem \ref{th:main}.

\subsection{Framed branching data and branched Schur invariants}
\label{ss:book}
Let $\phi$ be a homomorphism in $R_{g,v}$ and let $K=\{p_1,\dots,p_n\} \subseteq \Sigma_g$ be the set of punctures.  To associate a $C$-branched $G$-cover of $\Sigma_g$ to $\phi$, we must first choose a framing of $K$.  As discussed at the end of Section \ref{ss:defn}, we can specify a framing simply by decorating each point $p_i$ with a sign $o_i \in \{+1,-1\}$.

The simplest choice of framing is the \emph{positive framing} with $o_i=+1$ for all $i=1,\dots,n$.  If $v \in \Z_{\geq 0}^{C\sslash G}$ has cardinality $n$, it follows immediately from the definition of $R_{g,v}$ in Section \ref{s:intro} that the homomorphism $\phi$ satisfies the $C$-branched condition with respect to the positive framing.  Thus, by Lemma \ref{l:extension}, we can form a $C$-branched $G$-cover of $\Sigma_g$ with branch locus $K$.

Theorem \ref{th:main} is only concerned with homomorphisms in $R_{g,v}$, and, hence, branched covers with positive trivializations.  However, since our proof uses $BG_C$, it requires us to study covers with trivializations that are not positive.  The definition of branching data we provided in Section \ref{s:intro} is only sufficient for positive framings, so we now define and introduce notation for more general framings.

Let
\[ \begin{aligned} T: K &\to \{+1,-1 \} \\ p_i &\mapsto o_i \end{aligned} \]
denote a framing of $K$ and pick a homomorphism
\[ \phi: \pi_1(\Sigma_{g,n}) \to G \]
that satisfies the $C$-branched condition with respect to $T$.  Lemma \ref{l:extension} shows the pair $(T,\phi)$ determines a $C$-branched $G$-cover of $\Sigma_g$ with branch locus $K$, which we will often refer to simply by $(T,\phi)$.  We define the \emph{branching data} of this cover to be the vector
\[ v_{(T,\phi)} \in \Z_{\geq 0}^{C\sslash G \times \{+1,-1\}} \]
such that
\[ (v_{(T,\phi)})(\ov{c},o) \defeq |\{ 1 \leq i \leq n \mid o_i = o, \phi(\gamma_i)^o \in \ov{c} \}|, \]
where $\ov{c}$ is a conjugacy class in $C\sslash G$ and $o \in \{+1,-1\}$.  Thus, $(v_{(T,\phi)})(\ov{c},o)$ counts the number of branch points such that winding around them in the direction $o$ yields monodromy in $\ov{c}$.

In Section \ref{s:intro} we defined the branching data of a homomorphism in $R_{g,n}$.  Implicit in that definition was the use of the positive framing of $K$.  From now on, we make the identification
\[  \Z_{\geq 0}^{C\sslash G} = \Z_{\geq 0}^{C\sslash G \times \{+1\}} \subseteq \Z_{\geq 0}^{C\sslash G \times \{+1,-1\}}. \]
So our new definition generalizes the previous definition: $v_{(T,\phi)} = v_\phi$ if $T$ is the constant function $+1$, \ie a positive framing.

As before, we say that a vector in $v$ in $\Z_{\geq 0}^{C\sslash G \times \{+1,-1\}}$ has cardinality $n$ if the sum of its entries is $n$.  Given any $v$ of cardinality $n$, we define
\[ R_{g,v} \defeq \{ (T,\phi) \mid \phi \text{ onto and $C$-branched w.r.t. $T$, } v_{(\phi,T)} = v \}. \]
The pointed mapping class group $\MCG_*(\Sigma_{g,n})$ acts on $R_{g,v}$.  The action on the $\phi$ component is the usual action of $\MCG_*(\Sigma_{g,n})$ on a homomorphism, and the action on the $T$ component is induced by the permutation action on $K$.  If
\[ v \in \Z_{\geq 0}^{C\sslash G} = \Z_{\geq 0}^{C\sslash G \times \{+1\}} \subseteq \Z_{\geq 0}^{C\sslash G \times \{+1,-1\}} \]
then $T$ must be the positive trivialization, and the definition of $R_{g,v}$ given is the same as the definition of $R_{g,v}$ given in Section \ref{s:intro}.

It is useful to interpret $R_{g,v}$ as a parametrization of the set of equivalence classes of connected $C$-branched $G$-covers of $\Sigma_g$ with branching data $v$.  Indeed, every connected $C$-branched $G$-cover of $\Sigma_g$ is equivalent to a cover specfied by an element of $R_{g,v}$ for some unique $v$.  Of course, there could be many such elements of $R_{g,v}$; the goal of this paper is to understand them.

By Theorem \ref{th:brand}, the cover $(T,\phi) \in R_{g,v}$ induces a homotopy class of a map
\[ (T,\phi)_\#: \Sigma_g \to BG_C \]
which in turn induces a homomorphism
\[ (T,\phi)_*: H_2(\Sigma_g) \to H_2(BG_C). \]
The \emph{$C$-branched Schur invariant of $(T,\phi)$} is the homology class
\[ \sch_C(T,\phi) \defeq (T,\phi)_*\left([\Sigma_g]\right) \in H_2(BG_C), \]
where $[\Sigma_g]$ is the orientation of $\Sigma_g$.

\subsection{Stable equivalence lemmas}
\label{ss:stable}
We now explain the role of the Schur invariant.

Let $(T,\phi)$ be a $C$-branched $G$-cover of $\Sigma_g$.  A \emph{handle stabilization} of $(T,\phi)$ is any $C$-branched $G$-cover of a surface that is equivalent to the connect sum of $(T,\phi)$ with the trivial $C$-branched $G$-cover over the torus $S^1 \times S^1$.

For any $\overline{c} \in C\sslash G$, let $S^2_{\overline{c}}$ denote any $C$-branched $G$-cover of the oriented sphere $S^2$ such that the branch locus consists of one point with branch type $(\overline{c},+1)$ and one point with branch type $(\overline{c},-1)$.  For example, see the cover on the left side of Figure \ref{f:trivializations}.  If the conjugacy class $\overline{c}$ has more than one element, then there are inequivalent covers which we denote $S^2_{\overline{c}}$.  However, there will never be any ambiguity because of how we use these covers, which we now explain.

A \emph{$\overline{c}$-stabilization} of $(T,\phi)$ is any $C$-branched $G$-cover of a surface that is equivalent to the connect sum of $(T,\phi)$ with a cover of the form $S^2_{\overline{c}}$.  Note that if $(T,\phi)$ is connected (\ie $\phi$ is surjective), it does not matter which $S^2_{\overline{c}}$ we use, since stabilizing by any of them yields equivalent covers.

A \emph{puncture stabilization} of $(T,\phi)$ is some sequence of $\overline{c}$-stabilizations of $(T,\phi)$ with various $\overline{c} \in C\sslash G$.  We call a connect sum of copies of $S^2_{\overline{c}}$, for possibly varying $c$, a \emph{puncture stabilizing sphere}.

Note that if $(T,\phi) \in R_{g,v}$, then a $\overline{c}$-stabilization of $(T,\phi)$ has branch type
\[ v + \delta_{(\overline{c},+1)} + \delta_{(\overline{c},-1)} \in \Z_{\geq 0}^{C\sslash G \times \{+1,-1\} } \]
where $\delta_{(\overline{c},\pm 1)}$ is the delta function on $(\overline{c},\pm 1)$.  In particular, a puncture stabilization never has positive trivialization.  (This fact is responsible for the ``dilation" map we introduce in the next subsection.)

Finally, we say two $C$-branched $G$-covers are \emph{stably equivalent} if they are equivalent after applying some sequence of handle and puncture stabilizations to each of them.

\begin{proposition}
Suppose $(T,\phi) \in R_{g,v}$ and $(S,\psi) \in R_{g',w}$ are connected $C$-branched $G$-covers of oriented surfaces.  Then $\sch_C(T,\phi) = \sch_C(S,\psi)$ if and only if $(T,\phi)$ and $(S,\psi)$ are stably equivalent.  

Suppose, moreover, that $\langle C \rangle = G$ and $g=g'$.  Then handle stabilization is unnecessary.  That is, $\sch_C(T,\phi) = \sch_C(S,\psi)$ if and only if $(T,\phi)$ and $(S,\psi)$ are puncture-stabilization equivalent.
\label{p:stable}
\end{proposition}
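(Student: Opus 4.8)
The plan is to treat this as the $C$-branched analogue of Livingston's stable equivalence theorem \cite{Livingston:stabilizing}: the equality of branched Schur invariants will be translated into a cobordism between the two covers, and a Morse function on that cobordism will be decomposed so that its ordinary critical points yield handle stabilizations while the critical points of its restriction to the branch locus yield puncture stabilizations.

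For the easy direction, suppose $(T,\phi)$ and $(S,\psi)$ are stably equivalent. Connect sums of $C$-branched $G$-covers are performed inside a disk over which the ambient cover is trivial, so the classifying map of a connect sum factors up to homotopy through a wedge; hence $\sch_C$ is additive under connect sum. A handle stabilization is a connect sum with the trivial cover of $S^1\times S^1$, whose classifying map to $BG_C$ is null-homotopic and so contributes $0$. A $\ov c$-stabilization is a connect sum with some $S^2_{\ov c}$; because the two branch points of $S^2_{\ov c}$ carry opposite framing signs, it bounds a $C$-branched $G$-cover of $D^3$ (cap off with an arc carrying the blackboard framing, as on the left of \Fig{f:trivializations}), so it too contributes $0$ to $H_2(BG_C)$. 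Since equivalences preserve $\sch_C$, we conclude $\sch_C(T,\phi)=\sch_C(S,\psi)$.

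For the hard direction, suppose $\sch_C(T,\phi)=\sch_C(S,\psi)$. Homology and oriented bordism agree in dimension $2$, so the classifying maps $(T,\phi)_\#$ and $(S,\psi)_\#$ are bordant; running the proof of \Thm{th:brand} on such a bordism and gluing on concordance cylinders produces a $C$-branched $G$-cover of a compact oriented $3$-manifold $W$ with $\partial W = \Sigma_g \sqcup (-\Sigma_{g'})$ whose boundary restrictions are $(T,\phi)$ and $(S,\psi)$. Discarding closed components and tubing the rest together with trivial covers over the tubes, we may assume $W$ is connected and that each of $\Sigma_g$, $\Sigma_{g'}$ is nonempty in $\partial W$. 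Choose a generic Morse function $h\colon W\to[0,1]$ with $h^{-1}(0)=\Sigma_g$, $h^{-1}(1)=\Sigma_{g'}$, no critical points of index $0$ or $3$ (possible for a connected cobordism between nonempty surfaces), all index-$1$ critical values below all index-$2$ ones, and with $h|_{K_W}$ also Morse and having critical values all distinct and distinct from those of $h$. Cutting $[0,1]$ at regular values separating the critical events decomposes the cover of $W$ into elementary pieces. Over a product region the cover is a cylinder, giving an equivalence of the covers of its ends. An index-$1$ (resp.\ index-$2$) critical point of $h$ adds (resp.\ removes) a handle; the level surfaces and their covers stay connected ($1$-surgery cannot disconnect, and a $2$-surgery that did would leave the cover disconnected for the rest of the cobordism, contradicting connectedness of $(S,\psi)$), and the added handle has monodromy of the form $(a,1)$, which a mapping class of the level surface converts to $(1,1)$, absorbing $a$ via surjectivity of the monodromy; so this piece is a handle stabilization up to equivalence. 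A local minimum (resp.\ maximum) of $h|_{K_W}$ is the birth (resp.\ death), inside a disk over which the ambient cover is trivial, of a pair of branch points joined by a short arc of $K_W$ and carrying opposite framing signs, which by \Lem{l:extension} is a $\ov c$-stabilization (resp.\ its inverse) for the appropriate $\ov c$. After a standard rearrangement placing all ``adding'' pieces before all ``removing'' ones, there is a common cover obtained from each of $(T,\phi)$ and $(S,\psi)$ by handle and puncture stabilizations together with equivalences; that is, the two are stably equivalent.

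For the last statement, assume $\langle C\rangle = G$ and $g=g'$. A handle stabilization raises the genus by $1$, while a puncture stabilization, being a connect sum with genus-$0$ spheres, preserves it; so if $(T,\phi)$ and $(S,\psi)$ become equivalent after $h$ and $h'$ handle stabilizations respectively, then $g+h=g'+h'$ forces $h=h'$, and it suffices to replace each handle stabilization by a sequence of puncture stabilizations. This is exactly the $G$-handle surgery of \Lem{l:unbranched} (see \Fig{f:unbranched}): since $\langle C\rangle = G$, writing monodromies as products of elements of $C$ shows that any handle — the trivial handle of a handle stabilization, or an $(a,1)$-handle from the previous paragraph — is cobordant rel boundary to a disk carrying branch points of types in $C$. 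Performing this inside $W$ turns every handle event into births and deaths of branch points, i.e.\ into puncture stabilizations, so $(T,\phi)$ and $(S,\psi)$ are puncture-stabilization equivalent. The main obstacle throughout is the Morse-theoretic bookkeeping of the hard direction — coordinating $h$ with $h|_{K_W}$, checking each elementary piece is a (de)stabilization of the expected type (in particular the Livingston-style claim that an added handle is a handle stabilization, which is where surjectivity of the monodromy is essential), keeping the covers of all level surfaces connected, and confirming the opposite framing signs at the extrema of $h|_{K_W}$; the rest is routine Morse theory or a direct application of \Lem{l:unbranched}.
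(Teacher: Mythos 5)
Your proof of the first statement (the ``if and only if'') is correct and takes essentially the same route as the paper. The easy direction is identical: each stabilizing piece is null-cobordant and $\sch_C$ is additive under connect sum. For the hard direction, your full Morse decomposition of the cobordism $W$, with $h|_{K_W}$ also Morse and all critical events distinct, is a more granular packaging of what the paper does: arrange a relative handle decomposition with all $1$-handles before all $2$-handles, take the resulting Heegaard surface $\Sigma$, put $K$ in bridge position with respect to $\Sigma$, and observe that the cover of $\Sigma$ is a mutual stabilization of the two boundary covers. Both arguments use the same facts (bordism $=$ homology in dimension $2$, connectedness to trivialize the $1$- and $2$-handle covers, bridge position to realize branch-point births/deaths as $\overline{c}$-stabilizations), and both are correct.

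Your proof of the last statement has a real gap. The paper's argument is that $\langle C\rangle=G$ makes $\pi_1(BG_C)$ trivial by Lemma~\ref{l:pi1}, so the Hopf--Whitney classification shows homotopy classes of maps $\Sigma_g\to BG_C$ are determined by the pushforward of $[\Sigma_g]$. Hence equal Schur invariants imply the classifying maps are \emph{homotopic}, not merely bordant, so the covers are concordant --- cobordant over a \emph{cylinder} $\Sigma_g\times[0,1]$ --- and bridge position in a cylinder produces only puncture stabilizations. You instead begin from an arbitrary cobordism $W$, which may have essential $3$-manifold topology and hence unavoidable index-$1$ and index-$2$ critical points, observe that the genus count forces $h=h'$, and assert that the handle events can be ``replaced'' by puncture events via the $G$-handle surgery of Lemma~\ref{l:unbranched}. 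This does not work as stated: that surgery is a cobordism from a torus-with-boundary to a branched disk, not a local modification you can ``perform inside $W$'' while preserving both boundary surfaces and the cobordism structure; and the numerical equality $h=h'$ does not by itself allow you to cancel or trade the handles --- the $h$ handles stabilized on each side need not correspond under the eventual equivalence, and the $1$- and $2$-handles of $W$ need not form cancelling pairs. The crucial missing input is the simple connectivity of $BG_C$, which upgrades ``the classifying maps are bordant'' to ``the classifying maps are homotopic,'' and it is precisely this upgrade that lets the paper take $W$ to be a cylinder and thereby avoid handle events altogether.
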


Livingston proved this result in the unbranched case for which $C$ is empty \cite{Livingston:stabilizing}.  Moreover, he considers the question of extending his results to the branched case, but without employing puncture stabilization.  He even gives an example of two unbranched covers that are bordant as branched covers, but not as unbranched covers.  Our proposition seems to be the appropriate generalization of Livingston's result to the (framed) branched case.

\begin{proof}

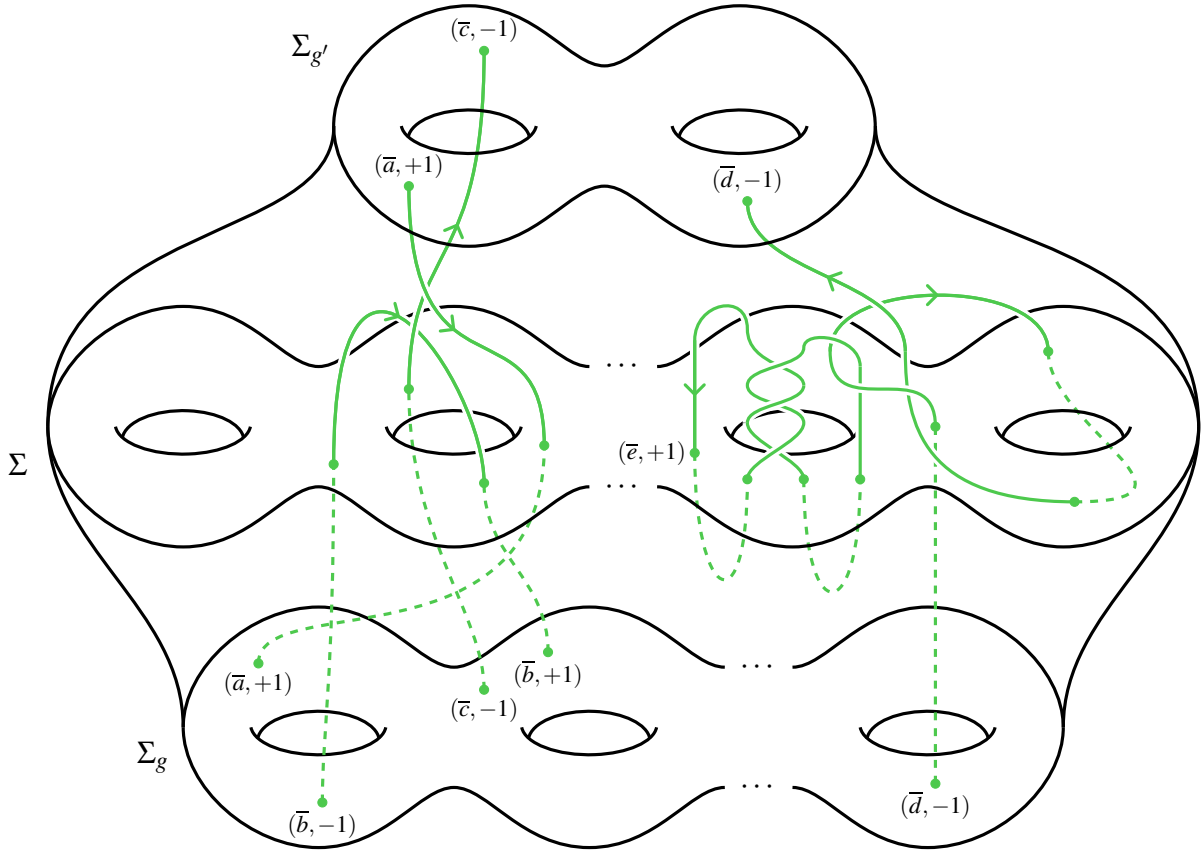
\begin{figure*}
\begin{tikzpicture}[decoration={markings,
    mark=at position 0.5 with {\arrow{angle 90}}}]
\begin{scope}[yscale=0.8,xscale=0.9]
\draw[very thick] (0,0) .. controls (0,2) and (-2,3) .. (-2,5).. controls (-2,8.5) and (2.225,8) .. (2.225,10);
\draw[very thick] (13,0) .. controls (13,2) and (15,3) .. (15,5) .. controls (15,8.5) and (10.225,8) .. (10.225,10);
\draw[very thick] (0,0) .. controls (0,-1) and (1,-2) .. (2,-2) .. controls (3,-2) and (3.5,-1) .. (4,-1) .. controls (4.5,-1) and (5,-2) .. (6,-2) .. controls (7,-2) and (7.5,-1) .. (8,-1);
\draw[very thick] (9,-1) .. controls (9.5,-1) and (10,-2) .. (11,-2) .. controls (12,-2) and (13,-1) .. (13,0);
\draw (8.5,-1) node {\large $\cdots$};
\draw[very thick] (1,0) arc [x radius = 1, y radius = .5, start angle = 185, end angle = 355];
\draw[very thick] (1.1,-.19) arc [x radius = .9, y radius = .5, start angle = 175, end angle = 5];
\begin{scope}[xshift=4cm]
\draw[very thick] (1,0) arc [x radius = 1, y radius = .5, start angle = 185, end angle = 355];
\draw[very thick] (1.1,-.19) arc [x radius = .9, y radius = .5, start angle = 175, end angle = 5];
\end{scope}
\begin{scope}[xshift=9cm]
\draw[very thick] (1,0) arc [x radius = 1, y radius = .5, start angle = 185, end angle = 355];
\draw[very thick] (1.1,-.19) arc [x radius = .9, y radius = .5, start angle = 175, end angle = 5];
\end{scope}
\begin{scope}[yscale=-1]
\draw[very thick] (0,0) .. controls (0,-1) and (1,-2) .. (2,-2) .. controls (3,-2) and (3.5,-1) .. (4,-1) .. controls (4.5,-1) and (5,-2) .. (6,-2) .. controls (7,-2) and (7.5,-1) .. (8,-1);
\draw[very thick] (9,-1) .. controls (9.5,-1) and (10,-2) .. (11,-2) .. controls (12,-2) and (13,-1) .. (13,0);
\draw (8.5,-1) node {\large $\cdots$};
\end{scope}
\end{scope}

\coordinate (ga1) at (1,.85);
\coordinate (gb1) at (1.85,-1);
\coordinate (gb2) at (4.85,1);
\coordinate (gc1) at (4.0,.5);
\coordinate (gd1) at (10,-.75);

\coordinate (mb1) at (2,3.5);
\coordinate (mc1) at (3,4.5);
\coordinate (mb2) at (4,3.25);
\coordinate (ma1) at (4.8,3.75);
\coordinate (me1) at (6.8,3.65);
\coordinate (me2) at (7.5,3.3);
\coordinate (me3) at (8.25,3.3);
\coordinate (me4) at (9,3.3);
\coordinate (md1) at (10,4);
\coordinate (md2) at (11.5,5);
\coordinate (md3) at (11.85,3);

\coordinate (ha1) at (3,7.2);
\coordinate (hc1) at (4,9);
\coordinate (hd1) at (7.5,7);

\draw[medgreen,dashed,very thick] (gb2) .. controls ($(gb2)+(0,1)$) and ($(mb2)-(0,1)$) .. (mb2);
\draw[medgreen,dashed,very thick] (ga1) .. controls ($(ga1)+(0,1)$) and ($(ma1)-(0,3)$) .. (ma1);
\draw[medgreen,dashed,very thick] (gb1) .. controls ($(gb1)+(0,1)$) and ($(mb1)-(0,3)$) .. (mb1);
\draw[medgreen,dashed,very thick] (gc1) .. controls ($(gc1)+(0,1)$) and ($(mc1)-(0,2)$) .. (mc1);
\draw[medgreen,dashed,very thick] (gd1) .. controls ($(gd1)+(0,1)$) and ($(md1)-(0,1)$) .. (md1);
\draw[medgreen,dashed,very thick] (me1) .. controls ($(me1)-(0,2)$) and ($(me2)-(0,2)$) .. (me2);
\draw[medgreen,dashed,very thick] (me3) .. controls ($(me3)-(0,2)$) and ($(me4)-(0,2)$) .. (me4);
\draw[medgreen,dashed,very thick] (md2) .. controls ($(md2)-(0,1)$) and ($(md3)-(-2,0)$) .. (md3);

\begin{scope}[yshift=4cm,yscale=0.8,xscale=0.9,xshift=-2cm]
\draw[very thick] (0,0) .. controls (0,-1) and (1,-2) .. (2,-2) .. controls (3,-2) and (3.5,-1) .. (4,-1) .. controls (4.5,-1) and (5,-2) .. (6,-2) .. controls (7,-2) and (7.5,-1) .. (8,-1);
\draw[very thick] (9,-1) .. controls (9.5,-1) and (10,-2) .. (11,-2) .. controls (12,-2) and (12.5,-1) .. (13,-1) .. controls (13.5,-1) and (14,-2) .. (15,-2) .. controls (16,-2) and (17,-1) .. (17,0);
\draw (8.5,-1) node {\large $\cdots$};
\draw[very thick] (1,0) arc [x radius = 1, y radius = .5, start angle = 185, end angle = 355];
\draw[very thick] (1.1,-.19) arc [x radius = .9, y radius = .5, start angle = 175, end angle = 5];
\begin{scope}[xshift=4cm]
\draw[very thick] (1,0) arc [x radius = 1, y radius = .5, start angle = 185, end angle = 355];
\draw[very thick] (1.1,-.19) arc [x radius = .9, y radius = .5, start angle = 175, end angle = 5];
\end{scope}
\begin{scope}[xshift=9cm]
\draw[very thick] (1,0) arc [x radius = 1, y radius = .5, start angle = 185, end angle = 355];
\draw[very thick] (1.1,-.19) arc [x radius = .9, y radius = .5, start angle = 175, end angle = 5];
\end{scope}
\begin{scope}[xshift=13cm]
\draw[very thick] (1,0) arc [x radius = 1, y radius = .5, start angle = 185, end angle = 355];
\draw[very thick] (1.1,-.19) arc [x radius = .9, y radius = .5, start angle = 175, end angle = 5];
\end{scope}
\begin{scope}[yscale=-1]
\draw[very thick] (0,0) .. controls (0,-1) and (1,-2) .. (2,-2) .. controls (3,-2) and (3.5,-1) .. (4,-1) .. controls (4.5,-1) and (5,-2) .. (6,-2) .. controls (7,-2) and (7.5,-1) .. (8,-1);
\draw[very thick] (9,-1) .. controls (9.5,-1) and (10,-2) .. (11,-2) .. controls (12,-2) and (12.5,-1) .. (13,-1) .. controls (13.5,-1) and (14,-2) .. (15,-2) .. controls (16,-2) and (17,-1) .. (17,0);
\draw (8.5,-1) node {\large $\cdots$};
\end{scope}
\end{scope}

\draw[white,double=medgreen,very thick,double distance=1.2] (mb1) .. controls ($(mb1)+(0,4)$) and ($(mb2)+(0,1.33)$) .. (mb2);
\draw[medgreen,very thick,postaction={decorate}] (mb1) .. controls ($(mb1)+(0,4)$) and ($(mb2)+(0,1.33)$) .. (mb2);
\draw[white,double=medgreen,very thick,double distance=1.2] (mc1) .. controls ($(mc1)+(0,2)$) and ($(hc1)-(0,3)$) .. (hc1);
\draw[medgreen,very thick,postaction={decorate}] (mc1) .. controls ($(mc1)+(0,2)$) and ($(hc1)-(0,3)$) .. (hc1);
\draw[white,double=medgreen,very thick,double distance=1.2] (ha1) .. controls ($(ha1)-(0,3)$) and ($(ma1)+(0,2)$) .. (ma1);
\draw[medgreen,very thick,postaction={decorate}] (ha1) .. controls ($(ha1)-(0,3)$) and ($(ma1)+(0,2)$) .. (ma1);

\draw[white,double=medgreen,very thick,double distance=1.2] (me4) -- ($(me4)+(0,1.5)$);

\draw[white,double=medgreen,very thick,double distance=1.2] (md3) .. controls ($(md3)+(-2,0)$) and ($(9.6,5)-(0,1)$) .. (9.6,5);
\draw[white,double=medgreen,very thick,double distance=1.2] (md1) .. controls ($(md1)+(0,1)$) and ($(8.6,5)-(0,1)$) .. (8.6,5) .. controls (8.6,6) and ($(md2)+(0,1)$) .. (md2);
\draw[medgreen,very thick,postaction={decorate}] (8.6,5) .. controls (8.6,6) and ($(md2)+(0,1)$) .. (md2);
\draw[white,double=medgreen,very thick,double distance=1.2] (9.6,5) .. controls (9.6,6) and ($(hd1)-(0,1)$) .. (hd1);
\draw[medgreen,very thick,postaction={decorate}] (9.6,5) .. controls (9.6,6) and ($(hd1)-(0,1)$) .. (hd1);

\draw[white,double=medgreen,very thick,double distance=1.2] (me3) .. controls ($(me3)+(0,.25)$) and ($(me2)+(0,.5)$) .. ($(me2)+(0,.75)$);
\draw[white,double=medgreen,very thick,double distance=1.2] ($(me3)+(0,1.25)$) .. controls ($(me3)+(0,1.5)$) and ($(me2)+(0,1.5)$) .. ($(me2)+(0,2)$) .. controls ($(me2)+(0,2.5)$) and ($(me1)+(0,2)$) .. ($(me1)+(0,1.5)$) -- (me1);
\draw[medgreen,very thick,postaction={decorate}] ($(me1)+(0,1.5)$) -- (me1);
\draw[white,double=medgreen,very thick,double distance=1.2] ($(me4)+(0,1.5)$) .. controls ($(me4)+(0,1.9)$) and ($(me3)+(0,2)$) .. ($(me3)+(0,1.75)$) .. controls ($(me3)+(0,1.5)$) and ($(me2)+(0,1.5)$)  .. ($(me2)+(0,1.25)$) .. controls ($(me2)+(0,1)$) and ($(me3)+(0,1)$) .. ($(me3)+(0,.75)$) .. controls ($(me3)+(0,.5)$) and ($(me2)+(0,.25)$) .. (me2);
\draw[white,double=medgreen,very thick,double distance=1.2] ($(me2)+(0,.75)$) .. controls ($(me2)+(0,1)$) and ($(me3)+(0,1)$) .. ($(me3)+(0,1.25)$);

\draw[left] (me1) node {$(\overline{e},+1)$};

\draw[medgreen,fill=medgreen] (ga1) circle[radius=.06];
\draw[below] (ga1) node {$(\overline{a},+1)$};
\draw[medgreen,fill=medgreen] (gb1) circle[radius=.06];
\draw[below] (gb1) node {$(\overline{b},-1)$};
\draw[medgreen,fill=medgreen] (gb2) circle[radius=.06];
\draw[below] (gb2) node {$(\overline{b},+1)$};
\draw[medgreen,fill=medgreen] (gc1) circle[radius=.06];
\draw[below] (gc1) node {$(\overline{c},-1)$};
\draw[medgreen,fill=medgreen] (gd1) circle[radius=.06];
\draw[below] (gd1) node {$(\overline{d},-1)$};
\draw (-.4,-.4) node {\large $\Sigma_g$};

\draw[medgreen,fill=medgreen] (mb1) circle[radius=.06];
\draw[medgreen,fill=medgreen] (mc1) circle[radius=.06];
\draw[medgreen,fill=medgreen] (mb2) circle[radius=.06];
\draw[medgreen,fill=medgreen] (ma1) circle[radius=.06];
\draw[medgreen,fill=medgreen] (me1) circle[radius=.06];
\draw[medgreen,fill=medgreen] (me2) circle[radius=.06];
\draw[medgreen,fill=medgreen] (me3) circle[radius=.06];
\draw[medgreen,fill=medgreen] (me4) circle[radius=.06];
\draw[medgreen,fill=medgreen] (md1) circle[radius=.06];
\draw[medgreen,fill=medgreen] (md2) circle[radius=.06];
\draw[medgreen,fill=medgreen] (md3) circle[radius=.06];
\draw (-2.2,3.5) node {\large $\Sigma$};

\draw[medgreen,fill=medgreen] (ha1) circle [radius=.06];
\draw[above] (ha1) node {$(\overline{a},+1)$};
\draw[medgreen,fill=medgreen] (hc1) circle [radius=.06];
\draw[above] (hc1) node {$(\overline{c},-1)$};
\draw[medgreen,fill=medgreen] (hd1) circle [radius=.06];
\draw[above] (hd1) node {$(\overline{d},-1)$};
\draw (1.7,9) node {\large $\Sigma_{g'}$};

\begin{scope}[xshift=2cm,yshift=8cm,yscale=0.8,xscale=0.9]
\draw[very thick] (0,0) .. controls (0,-1) and (1,-2) .. (2,-2) .. controls (3,-2) and (3.5,-1) .. (4,-1) .. controls (4.5,-1) and (5,-2) .. (6,-2) .. controls (7,-2) and (8,-1) .. (8,0);
\draw[very thick] (1,0) arc [x radius = 1, y radius = .5, start angle = 185, end angle = 355];
\draw[very thick] (1.1,-.19) arc [x radius = .9, y radius = .5, start angle = 175, end angle = 5];
\begin{scope}[xshift=4cm]
\draw[very thick] (1,0) arc [x radius = 1, y radius = .5, start angle = 185, end angle = 355];
\draw[very thick] (1.1,-.19) arc [x radius = .9, y radius = .5, start angle = 175, end angle = 5];
\end{scope}
\begin{scope}[yscale=-1]
\draw[very thick] (0,0) .. controls (0,-1) and (1,-2) .. (2,-2) .. controls (3,-2) and (3.5,-1) .. (4,-1) .. controls (4.5,-1) and (5,-2) .. (6,-2) .. controls (7,-2) and (8,-1) .. (8,0);
\end{scope}
\end{scope}

\end{tikzpicture}
\caption{A cobordism $M$ between the branched cover $(T,\phi)$ of $\Sigma_g$ and the branched cover $(S,\psi)$ of $\Sigma_{g'}$ provides a mutual stabilization of the two covers.  The branch locus $K \subseteq M$ is drawn in green.  The dashed green arcs denote $K_1$, which is the part of $K$ between $\Sigma_g$ and the relative Heegaard splitting $\Sigma$.  The solid green arcs denote $K_2$, which is the part of $K$ between $\Sigma$ and $\Sigma_{g'}$.  The labels $a,b,c,d,e$ denote elements in $C$.   To avoid overcrowding this schematic, we have not drawn a Heegaard diagram on $\Sigma$.}
\label{f:cob}
\end{figure*}

As in Livingston's proof, the main idea is to use the fact that integral homology and oriented cobordism are the same in dimension 2.  Thus, if the homology classes $\sch_C(T,\phi)$ and $\sch_C(S,\psi)$ in $H_2(BG_C)$ are equal, then there is a branched cover of some oriented 3-manifold $M$ with boundary the union of $(T,\phi)$ and $(S,\psi)$.  Let $K \subseteq M$ be the branch locus of this cover.

Construct a relative handle decomposition of $M$ and arrange so that all 1-handles are attached before 2-handles, and so that the attaching maps of the handles avoid the branch loci on $\Sigma_g$ and $\Sigma_{g'}$.  Because $(T,\phi)$ is connected, we can slide the attaching maps for the 1-handles in the complement of the branch locus to guarantee that the $G$-cover over each of these new handles is trivial.  We can do the same thing for the 2-handles because $(S,\psi)$ is connected.  See \cite{Livingston:stabilizing} for details.  Let $\Sigma$ be the resulting relative Heegaard surface, which is the intersection of the compression bodies
\[\begin{aligned}
H_1 &\defeq \Sigma_g \cup \{\text{1-handles}\}, \\
H_2 &\defeq \Sigma_h \cup \{\text{2-handles}\}.
\end{aligned}\]
Isotope $K$ so that it is transverse to $\Sigma$, all maxima of $K$ are on the top side of $\Sigma$, and all minima are on the bottom side.  The latter two conditions can be understood as saying that $\Sigma$ is a bridge surface for the tangle $K \subseteq M$.  The $C$-branched $G$-cover of $M$ restricts to a $C$-branched $G$-cover over $\Sigma$.  See Figure \ref{f:cob}.

We claim that this cover of $\Sigma$ is a stabilization of both $(T,\phi)$ and $(S,\psi)$.  Let
\[\begin{aligned}
K_1 &\defeq K \cap H_1, \\
K_2 &\defeq K \cap H_2.
\end{aligned}\]
So $K_1$ is the portion of $K$ that lies between $\Sigma_g$ and $\Sigma$, while $K_2$ is the portion of $K$ that lies between $\Sigma$ and $\Sigma_{g'}$.  Let $\alpha_1,\dots,\alpha_k \subseteq H_1$ be the components of $K_1$ without endpoints on $\Sigma_g$.  The conditions on the maxima and minima of $K$ guarantee that $K_1$ can be isotoped inside $H_1$ so that the arcs $\alpha_1,\dots,\alpha_k$ lie inside $\Sigma$ simultaneously.  Equivalently, there is a collection of disjoint disks $D_1,\dots,D_k$ in $H_1$ such that $\alpha_i \subseteq \partial D_i$ and $\partial D_i \setminus \alpha_i \subseteq \Sigma$.  If we restrict the cover over $\Sigma$ to a regular neighborhood of $\partial D_i \setminus \alpha_i^\circ$ in $\Sigma$, we see a summand equivalent to $S^2_{\overline{c}}$ for some $\overline{c} \in C\sslash G$.  We already slid the 1-handles to ensure the cover over them is trivial, so we conclude that $\Sigma$ is a stabilization of $(T,\phi)$.  This same argument with $\Sigma_{g'}$, $H_2$ and $K_2$ used in place of $\Sigma_g$, $H_1$ and $K_1$ shows that the cover over $\Sigma$ is a stabilization of $(S,\psi)$.  Thus $(T,\phi)$ and $(S,\psi)$ are stably equivalent.

Conversely, suppose $(T,\phi)$ and $(S,\psi)$ are stably equivalent.  It follows from Figures \ref{f:trivializations} and \ref{f:torus} that $S^2_{\overline{c}}$ and the trivial cover of $S^1 \times S^1$ are both null-cobordant.  Since homology and oriented bordism are the same in dimension 2, the branched Schur invariants of these covers are both trivial.  The Schur invariant of a connect-sum is the sum of the Schur invariants.  Hence puncture stabilization and handle stabilization both preserve the Schur invariant, and so $\sch_C(T,\phi) = \sch_C(S,\psi)$.

Finally, suppose $\langle C \rangle = G$, $g=g'$, and $\sch_C(T,\phi) = \sch_C(S,\psi)$.  Lemma \ref{l:pi1} says $\pi_1(BG_C)$ is trivial, so the Hopf-Whitney classification \cite[Corollary V.6.19]{Whitehead:homotopyGTM} and the universal coefficient theorem imply that homotopy classes of maps $\Sigma_g \to BG_C$ are determined by their induced maps $H_2(\Sigma_g) \to H_2(BG_C)$, which of course are determined by the values taken on $[\Sigma_g]$, \emph{i.e.} the Schur invariant.  Thus, $(T,\phi)$ and $(S,\psi)$ induce homotopic maps to $BG_C$.  As in the proof of Theorem \ref{th:brand}, this implies $(T,\phi)$ and $(S,\psi)$ are cobordant via a cylinder $M = \Sigma_g \times [0,1]$.  As before, isotope the branch locus $K \subset M$ so that it is in bridge position with respect to the surface $\Sigma = \Sigma_g \times \{1/2\}$.  The restriction of the cover over $M$ to $\Sigma$ is a mutual puncture stabilization of $(T,\phi)$ and $(S,\psi)$.
\end{proof}

We remark that Proposition \ref{p:stable} is true even if $G$ is infinite.  However, all of our other results in this section use finiteness of $G$ in some way.

Any two $\overline{c}$-stabilizations of a given connected cover are equivalent.  Thus, $\overline{c}$-stabilization yields a well-defined map
\[ p_{\overline{c}}: R_{g,v}/\MCG_*(\Sigma_{g,n}) \to R_{g,v + \delta_{(\overline{c},+1)} + \delta_{(\overline{c},-1)}} / \MCG_*(\Sigma_{g,n+2}). \]
Similarly, any two handle stabilizations are equivalent, so handle stabilization yields a map
\[ h: R_{g,v}/\MCG_*(\Sigma_{g,n}) \to R_{g+1,v}/\MCG_*(\Sigma_{g+1,n}). \]
Since two elements of $R_{g,v}$ in the same $\MCG_*(\Sigma_{g,n})$ orbit are equivalent as $C$-branched $G$-covers, they must have the same Schur invariant.  Thus, the Schur invariant yields a map
\[ \sch_C: R_{g,v}/\MCG_*(\Sigma_{g,n}) \to H_2(BG_C). \]
All of these maps commute.  More precisely, see the commuting diagram in Figure \ref{f:commute}.  A similar diagram commutes where, instead of involving a puncture stabilization and a handle stabilization, there are two puncture stabilizations of different types.

\begin{figure*}[t]
\begin{tikzcd}
 & R_{g,v + \delta_{(\overline{c},+1)} + \delta_{(\overline{c},-1)}} / \MCG_*(\Sigma_{g,n+2})  \arrow{dr}{h} \arrow{d}{\sch_C}& \\
R_{g,v}/\MCG_*(\Sigma_{g,n}) \arrow{r}{\sch_C} \arrow{ur}{p_{\overline{c}}} \arrow{dr}{h} & H_2(BG_C)  & R_{g+1,v + \delta_{(\overline{c},+1)} + \delta_{(\overline{c},-1)}} / \MCG_*(\Sigma_{g+1,n+2}) \arrow{l}{\sch_C}  \\
 & R_{g+1,v}/\MCG_*(\Sigma_{g+1,n}) \arrow{ur}{p_{\overline{c}}} \arrow{u}{\sch_C} & 
\end{tikzcd}
\caption{The diagram commutes.}
\label{f:commute}
\end{figure*}

The next two lemmas show that whenever $g$ and $v$ are large enough, everything is a stabilization.

\begin{lemma}
If $g>|G|$, then $h$ is surjective.
\label{l:handle}
\end{lemma}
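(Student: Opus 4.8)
The plan is to show that every $\MCG_*(\Sigma_{g+1,n})$-orbit in $R_{g+1,v}$ is a handle stabilization, by exhibiting in each orbit a representative $\psi$ for which one of the standard genus handles carries trivial monodromy. Fix the usual generators $a_1,b_1,\dots,a_{g+1},b_{g+1},\gamma_1,\dots,\gamma_n$ of $\pi_1(\Sigma_{g+1,n})$ with relation $\prod_i[a_i,b_i]\prod_j\gamma_j=1$, and suppose $\psi\in R_{g+1,v}$ has $\psi(a_k)=\psi(b_k)=1$ for some $k$. The $k$-th handle is cut off by a separating simple closed curve $\delta_k$ disjoint from the punctures, and $\psi(\delta_k)=[\psi(a_k),\psi(b_k)]=1$; capping $\delta_k$ off with a disk replaces $\Sigma_{g+1,n}$ by $\Sigma_{g,n}$, and $\psi$ descends to a homomorphism $\psi'$ on its fundamental group. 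Since $\psi(a_k)=\psi(b_k)=1$, the image of $\psi'$ is still $G$ and the branch data is still $v$, so $\psi'\in R_{g,v}$; and by construction $\psi$ is equivalent, as a $C$-branched $G$-cover, to the connect sum of $\psi'$ with the trivial cover of $S^1\times S^1$, \ie $[\psi]=h([\psi'])$. Hence it suffices to move $\psi$ by a mapping class until some handle's monodromy becomes trivial.

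A pigeonhole argument first produces a handle whose monodromy is \emph{redundant}. Set $G_0\defeq\langle\psi(\gamma_1),\dots,\psi(\gamma_n)\rangle$ and $G_k\defeq\langle G_0,\ \psi(a_i),\psi(b_i)\ (1\le i\le k)\rangle$, so that $G_0\le G_1\le\dots\le G_{g+1}=G$ by surjectivity of $\psi$. If every one of these inclusions were proper then $|G_k|\ge 2|G_{k-1}|$ for each $k$, whence $|G|\ge 2^{g+1}$ and $g+1\le\log_2|G|$, contradicting $g>|G|$ (which already gives $g+1\ge|G|+2>\log_2|G|$). So $G_{k-1}=G_k$ for some $k$; equivalently, $\psi(a_k)$ and $\psi(b_k)$ both lie in the subgroup $H$ generated by all the standard generators except $a_k$ and $b_k$.

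It remains to show that if a handle's monodromy lies in the subgroup generated by the remaining generators, then some mapping class trivializes that handle while keeping $\psi$ surjective; combined with the previous two paragraphs this finishes the proof. This is precisely the mechanism used in the unbranched case by Dunfield--Thurston, after Livingston, and the branch points play no role here: the mapping classes one uses --- Dehn twists supported inside the genus handles, handle swaps, handle slides over the complementary subsurface, and point-pushing maps of the punctures --- can all be chosen disjoint from the branch locus $K$. Concretely, the $\mathrm{SL}_2(\Z)$-moves internal to handle $k$, together with handle slides and point-pushes, let one multiply $\psi(a_k)$ or $\psi(b_k)$ on the right by a conjugate of any chosen generator of $H$ with only controllable side effects on the other handles, so one can clear handle $k$ out completely, reaching $(\tau\cdot\psi)(a_k)=(\tau\cdot\psi)(b_k)=1$ for a suitable $\tau$. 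I expect this to be the one genuinely delicate point: each handle slide must be realized as an honest product of Dehn twists preserving the surface relation $\prod_i[a_i,b_i]\prod_j\gamma_j=1$, and one must bookkeep the cumulative effect on the other handles and on $\psi$. Since this is all standard, in the write-up I would either carry out the moves explicitly in these coordinates or simply cite Livingston and Dunfield--Thurston, noting that the bound $g>|G|$ is far more room than the pigeonhole of the second paragraph requires.
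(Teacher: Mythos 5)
Your argument takes a genuinely different route from the paper's, which follows Dunfield--Thurston's Proposition~6.16 essentially verbatim. There, the pigeonhole is applied to the partial products $w_i = a_i a_{i-1}\cdots a_1$ to produce a non-separating simple closed curve in $\ker\phi$; a maximal disjoint collection of such kernel curves is brought to the standard position $b_1,\dots,b_k$ by a mapping class; a second application of the pigeonhole yields another kernel curve which, by maximality, must meet one of the $b_l$; and a regular neighborhood of the union is a torus-with-boundary on which the cover is trivial. Your pigeonhole on the subgroup chain $G_0 \le G_1 \le \cdots \le G_{g+1}=G$ is a correct, and in fact more economical, argument (it needs only $g+1 > \log_2|G|$), but it produces a \emph{redundant} handle, not a kernel curve. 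The final reduction --- once $\psi(a_k)=\psi(b_k)=1$, capping off the separating curve $\delta_k$ exhibits $\psi$ as a handle stabilization with image and branch data preserved --- is fine.

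The gap is the central step. You assert that a handle whose monodromy merely lies in the subgroup generated by the other generators can always be ``cleared out'' by a mapping class, and attribute this to Dunfield--Thurston and Livingston. That is not literally what those references prove; they go directly to the geometric torus-with-boundary picture precisely so as to avoid having to control the cumulative side effects of handle slides, twists, and point-pushes on the other generators while respecting the surface relation $\prod_i[a_i,b_i]\prod_j\gamma_j=1$. You flag this as the one delicate point, which is the right instinct, but as written the proposal leaves the crux unproved and the citation is not a substitute: the references establish the conclusion of this lemma, not the intermediate ``redundant handle can be trivialized'' claim you wish to invoke. To close the gap you would either have to carry out the move bookkeeping explicitly (a real exercise, likely harder than the geometric argument it is meant to replace), or convert the redundancy into an explicit non-separating simple closed curve in $\ker\psi$ representing $a_k w^{-1}$ for some word $w$ in the other generators --- at which point you need to know such a primitive class is represented by a simple closed curve meeting $b_k$ once, and you are essentially back inside the paper's geometric framework, having gained little from the algebraic detour.
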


\begin{proof}
The proof is exactly the same as the proof of Proposition 6.16 in Dunfield and Thurston's work \cite{DunfieldThurston:random}.  In particular, the branch locus plays no role.

Let $(T,\phi) \in R_{g,v}$.  We want to find a torus with one boundary component inside $\Sigma_{g,n}$ where the cover $(T,\phi)$ is trivial over it.  Let $a_1,b_1,\dots,a_g,b_g$ be the elements of $\pi_1(\Sigma_{g,n})$ indicated in Figure \ref{f:wheel}, and let $w_i = a_i \cdots a_2 \cdot a_1$ for each $i=1,\dots,g$.  Since $g>|G|$, the pigeonhole principle guarantees $\phi(w_i) = \phi(w_j)$ for some $i<j$, hence $\phi(a_j\cdots a_{i+1}) = 1$.  Thus, $a_j\cdots a_{i+1}$ is a non-separating simple closed curve on $\Sigma_g$ in the kernel of $\phi$.  Let $c_1,\dots,c_k$ be a maximal disjoint collection of such curves.  After applying some equivalence, we can make $c_1=b_1,\dots,c_k=b_k$.  The same argument as before shows that some $w=a_j\cdots a_{i+1}$ is in the kernel of $\phi$.  Since the collection $c_1,\dots,c_k$ is maximal, $w$ must intersect one of the curves $c_l$.  Since $w$ and $c_l$ intersect in exactly one point, a regular neighborhood of the two curves is a torus with boundary.  By construction, $\phi$ is trivial on this torus, which shows $(T,\phi)$ is a handle stabilization.
\end{proof}

\begin{figure}[t]
\begin{tikzpicture}[decoration={markings,
    mark=at position 0.55 with {\arrow{angle 90}}},scale=0.9]
\clip (-4.9,-4.2) rectangle (4.9, 4.9);
\foreach \i in {0,2,...,6} {
\coordinate (a\i) at (36*\i:2cm);
\coordinate (b\i) at (36*\i+36:2cm);
\draw[very thick] (0,0) +(36*\i+36:2cm) arc [radius=2cm,delta angle=36,start angle=36*\i+36];
\draw[very thick] (a\i) .. controls ($3*(a\i)$) and ($3*(b\i)$) .. (b\i);
\draw[very thick] (36*\i+18:2.5cm) .. controls (36*\i+9:3.25cm) .. (36*\i+18:4cm);
\draw[very thick] (36*\i+16.5:2.6cm) .. controls (36*\i+27:3.25cm) .. (36*\i+17:3.9cm);
\draw[thick, blue,postaction={decorate}] (0,0) .. controls (36*\i+9:1.5cm) and (36*\i+6:2.5cm) .. (36*\i+6:3.25cm) .. controls (36*\i+6:4cm) and (36*\i+14:4.5cm) .. (36*\i+18:4.5cm) .. controls (36*\i+22:4.5cm) and (36*\i+30:4cm) .. (36*\i+30:3.25cm) .. controls  (36*\i+30:2.5cm) and (36*\i+27:1.5cm) .. (0,0);
\draw[thick,red] (0,0) .. controls (36*\i+5:2cm) and (36*\i+5:2.5cm) .. (36*\i:2.5cm);
\draw[thick,dashed,red] (0,0) (36*\i:2.5cm) -- (36*\i+13:2.9cm);
\draw[thick,red,postaction={decorate}] (36*\i+13:2.9cm) .. controls (36*\i+9:2.8cm) and (36*\i+15:2cm) .. (0,0);
}
\foreach \i in {1,2,3} {
\draw (72*\i+23:3.9cm) node {$a_{\i}$};
\draw (72*\i+18:2.2cm) node {$b_{\i}$};
}
\foreach \i in {0} {
\draw (72*\i+23:3.9cm) node {$a_{g}$};
\draw (72*\i+18:2.2cm) node {$b_{g}$};
}
\draw[very thick] (0,0) +(0:2cm) arc [radius=2cm,delta angle=-36,start angle=0];
\draw[fill,medgreen] (0,0) +(-40:1cm) circle [radius=0.03];
\draw[fill,medgreen] (0,0) +(-68:1cm) circle [radius=0.03];
\draw[fill,medgreen] (0,0) +(-76:1cm) circle [radius=0.03];
\draw[fill,medgreen] (0,0) +(-48:1cm) circle [radius=0.01];
\draw[fill,medgreen] (0,0) +(-54:1cm) circle [radius=0.01];
\draw[fill,medgreen] (0,0) +(-60:1cm) circle [radius=0.01];
\draw[fill] (0,0) +(-48:2cm) circle [radius=0.03];
\draw[fill] (0,0) +(-54:2cm) circle [radius=0.03];
\draw[fill] (0,0) +(-60:2cm) circle [radius=0.03];
\draw[fill] (0,0) circle [radius=0.04];
\end{tikzpicture}
\caption{The elements of $\pi_1(\Sigma_{g,n})$ used in the proof of Lemma \ref{l:handle}.}
\label{f:wheel}
\end{figure}
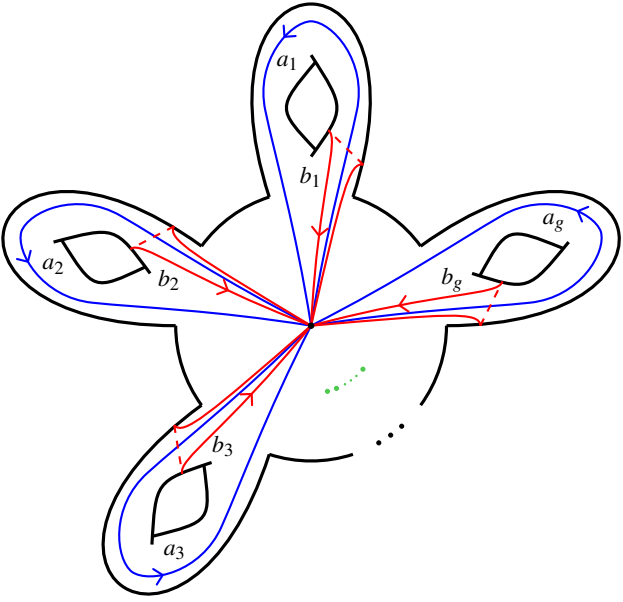

We say $v$ is \emph{larger} than $w$ if
\[ v(\overline{c},o) > w(\overline{c},o) \]
for all $(\overline{c},o) \in C\sslash G \times \{+1,-1\}$.  For each conjugacy class $\overline{c}$ in  $C\sslash G$, define a special branch type
\[ U^{\overline{c}} \in \mathbb{Z}_{\geq 0}^{C\sslash G \times \{+1,-1\}}\] that consists of the single element $(\overline{c},+1)$ with multiplicity $|\overline{c}| \cdot |\Inn_{\overline{c}}(c)|$, where $|\Inn_{\overline{c}}(c)|$ is the order of $c$ acting as an inner automorphism on the conjugacy class $\overline{c}$.  That is,
\[ U^{\overline{c}} = |\overline{c}| \cdot |\Inn_{\overline{c}}(c)| \cdot \delta_{(\overline{c},1)}. \]

\begin{lemma}
If $v$ is larger than $U^{\overline{c}}$, then the puncture stabilization map $p_{\overline{c}}$ is surjective.
\label{l:puncture}
\end{lemma}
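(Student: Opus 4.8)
The plan is to reduce \Lem{l:puncture} to a statement about Hurwitz moves on tuples of branch monodromies and then run a pigeonhole argument in the spirit of the proof of \Lem{l:handle}. Write $v' = v + \delta_{(\overline{c},+1)} + \delta_{(\overline{c},-1)}$, and let $(T',\phi') \in R_{g,v'}$ represent an arbitrary $\MCG_*(\Sigma_{g,n+2})$-orbit; by \Lem{l:extension} it is a connected $C$-branched $G$-cover of $\Sigma_g$ with branch locus $K' = \{p_1,\dots,p_{n+2}\}$. I would put all branch points on a single embedded arc out of the basepoint, so that the half-twists $\sigma_i$ exchanging $p_i$ and $p_{i+1}$, which lie in $\MCG_*(\Sigma_{g,n+2})$, act on $(T',\phi')$ by the Hurwitz move $(\phi'(\gamma_i),\phi'(\gamma_{i+1})) \mapsto (\phi'(\gamma_i)\phi'(\gamma_{i+1})\phi'(\gamma_i)^{-1},\phi'(\gamma_i))$ on the ordered tuple of branch monodromies, carrying each point's branch type $(\overline{c},o)$ along with it. The goal is to show that after some such mapping class the cover displays an explicit $S^2_{\overline{c}}$-summand: a simple closed curve $\delta$ bounding an embedded disk that meets $K'$ in exactly one point of type $(\overline{c},+1)$ and one of type $(\overline{c},-1)$, with $\phi'(\delta)=1$. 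Cutting along $\delta$ and capping off both sides then realizes $(T',\phi')$ as the connect sum of a cover in $R_{g,v}$ with a copy of $S^2_{\overline{c}}$ --- i.e.\ as a $\overline{c}$-stabilization --- so $p_{\overline{c}}$ is surjective. (Which $S^2_{\overline{c}}$ we get is immaterial, since $\phi'$ is surjective, as noted after the definition of $\overline{c}$-stabilization.) Concretely it suffices to use the $\sigma_i$ to bring a chosen type-$(\overline{c},-1)$ point $q$, with meridian monodromy $y$, into the last slot and a type-$(\overline{c},+1)$ point with monodromy $y^{-1}$ into the slot just to its left; then $\delta$ encircling these two has $\phi'(\delta) = y^{-1}y = 1$ and the required disk exists automatically.

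So the real content is the following Hurwitz statement: \emph{if the number of type-$(\overline{c},+1)$ branch points exceeds $|\overline{c}|\cdot|\Inn_{\overline{c}}(c)|$, then the half-twist action can place, in the slot immediately left of $q$, a type-$(\overline{c},+1)$ point with any prescribed monodromy in $\overline{c}$} --- in particular with monodromy $y^{-1}$, after first conjugating $y$ within its class if need be. The plan for this is: first use half-twists to move all $m := v'(\overline{c},+1)$ type-$(\overline{c},+1)$ points into consecutive slots $1,\dots,m$ and $q$ into slot $m+1$ (harmlessly conjugating their monodromies, which stay in $\overline{c}$); call the results $x_1,\dots,x_m$. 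Sliding the point in slot $m$ back and forth through its neighbors by (inverse) Hurwitz moves shows slot $m$ can be refilled with any of the ``suffix conjugates'' $(x_{i+1}\cdots x_m)^{-1}x_i(x_{i+1}\cdots x_m)$, and iterating this (possibly after first normalizing the other monodromies) enlarges the set of monodromies reachable in slot $m$. A pigeonhole count, patterned on the proof of \Lem{l:handle}, then shows this reachable set is all of $\overline{c}$ as soon as there are more than $|\overline{c}|\cdot|\Inn_{\overline{c}}(c)|$ such points: the factor $|\overline{c}|$ counts the possible targets, and the factor $|\Inn_{\overline{c}}(c)|$ --- the order of the permutation of $\overline{c}$ induced by conjugation by $c$ --- measures the worst-case ``steering cost'' between two elements of $\overline{c}$ using the cheap moves. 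Finally, since the multiplicity of $(\overline{c},+1)$ in $v'$ is at least $|\overline{c}|\cdot|\Inn_{\overline{c}}(c)|+1$, pigeonhole forces some value in $\overline{c}$ to occur at least twice among these points; steering the cancellation to consume a doubled value leaves one copy of it --- together with every other branch monodromy and all handle monodromies --- in the residual cover, so the residual cover is still connected and genuinely lies in $R_{g,v}$.

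The main obstacle is the combinatorial heart: verifying that $|\overline{c}|\cdot|\Inn_{\overline{c}}(c)|$ is \emph{exactly} the right threshold, i.e.\ that this many type-$(\overline{c},+1)$ points really do let one drive the slot-$m$ monodromy to an \emph{arbitrary} element of $\overline{c}$, and not merely to some $\langle\overline{c}\rangle$-conjugacy subclass, which can be strictly smaller (as the example of a $3$-cycle in $A_3 \leq S_3$ already shows). Once that Hurwitz lemma is secured, the remaining ingredients --- that the tuple moves come from honest elements of $\MCG_*(\Sigma_{g,n+2})$ compatibly with the sign data $T$, and that the cut-and-paste along $\delta$ produces exactly a summand of the form $S^2_{\overline{c}}$ (modeled on the null-cobordism pictures of \Fig{f:trivializations} and the handle surgery of \Fig{f:unbranched}) --- are routine and closely parallel the proof of \Lem{l:handle}.
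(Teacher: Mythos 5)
Your proposal correctly identifies the high-level strategy (pair up a $(\overline c,+1)$-point with the $(\overline c,-1)$-point so their meridian monodromies cancel, then cut out an $S^2_{\overline c}$-summand), but it has a genuine gap in the combinatorial heart, which you yourself flag and do not resolve.

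The gap is that your proposed mechanism --- half-twists $\sigma_i$ within a disk containing all branch points, i.e.\ the Hurwitz braid action --- only ever conjugates a branch monodromy by \emph{products of other branch monodromies}. There is no reason such conjugations reach an arbitrary element of $\overline c$: the reachable set is constrained to an orbit of the group generated by the relevant monodromy values, and (as you note) this can be a strict subclass of $\overline c$. Your heuristic that ``$|\overline c|$ counts the targets, $|\Inn_{\overline c}(c)|$ measures steering cost'' does not produce an actual argument closing this gap, and in fact misidentifies the role of $|\Inn_{\overline c}(c)|$. The paper escapes this exactly where your plan stalls: rather than Hurwitz-moving a \emph{single} $(\overline c,+1)$-point, it first uses the pigeonhole principle on the $>|\overline c|\cdot|\Inn_{\overline c}(c)|$ positive branch points to find $|\Inn_{\overline c}(c)|$ of them carrying the \emph{same} monodromy $c_1$, gathers these into a small disk $D'$, and then slides the whole disk $D'$ around a loop $\ell$ of $\Sigma_{g,n+2}$. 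Because $\phi$ is surjective, $\ell$ can be chosen so that $\phi(\ell)$ conjugates $c_1$ to $c_2$, where $c_2^{-1}$ is the $(\overline c,-1)$-point's monodromy. Crucially, this slide of $D'$ conjugates the remaining branch monodromies only by $\phi(\partial D')^{\pm1} = c_1^{\pm|\Inn_{\overline c}(c)|}$, which acts trivially on $\overline c$ and hence fixes $c_2^{-1}$. That is where the factor $|\Inn_{\overline c}(c)|$ actually comes from --- it is the number of identical copies needed to neutralize the back-reaction on $c_2^{-1}$, not a ``steering cost.'' The slide of $D'$ along a surface loop is not a composition of your $\sigma_i$'s alone (it exploits handle loops and surjectivity of $\phi$, i.e.\ the connectedness hypothesis), and without it your pigeonhole count does not suffice. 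The remaining pieces of your argument (the cut-and-cap along $\delta$, residual connectedness) are fine and agree with the paper, but they sit on top of the unproven steering step.
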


\begin{proof}
This lemma and its proof extend Lemma 4 of the appendix in Fried and Volklein's paper \cite{FriedVolklein:galois}.  

Let $(T,\phi)$ be an element of $R_{g,v + \delta_{(\overline{c},+1)} + \delta_{(\overline{c},-1)}}$.  Pick a disk $D$ in $\Sigma_{g,n+2}$ containing the branch set $K$.  Pick a basepoint on the disk, and consider a generating set for $\pi_1(D\setminus K)$ consisting of small loops around the branch points, oriented according to $T$.  Apply a braid to arrange so that a branch point of type $(\overline{c},-1)$ is to the right of all the other branch points.  The condition on $v$ and the pigeonhole principle guarantee that there is some $c' \in \overline{c}$ that appears as the monodromy around $|\Inn_{\overline{c}}(c)|$ many of the positively-oriented branch points.  Move all of these branch points to the left of the disk, being sure to braid them over the rest of the branch points, not under.  This conjugates the monodromies of the branch points that pass underneath, but preserves the monodromies of the $|\Inn_{\overline{c}}(c)|$ points passing over.  The resulting monodromies, arranged from left to right, look as follows:
\[ c_1, \ c_1, \ c_1, \dots \ c_1, *, \ *, \dots *, c_2^{-1},\]
where there are $|\Inn_{\overline{c}}(c)|$ copies of $c_1$ on the left, $c_2 \in \overline{c}$, and the asterisks $*$ denote arbitrary elements of $C$.  Choose a smaller disk $D'$ that contains only the $|\Inn_{\overline{c}}(c)|$ branch points on the left.  Since $(T,\phi)$ is connected, there is some loop on $\Sigma_{g,n+2}$ around which we can slide $D'$ so that the result is
\[ c_2, \ c_2, \ c_2, \dots \ c_2, *, \ *, \dots *, c_2^{-1}.\]
In particular, this sliding of $D'$ does not change $c_2^{-1}$ because any time $c_2^{-1}$ gets conjugated by a loop around a puncture in $D'$, it gets conjugated by all $|\Inn_{\overline{c}}(c)|$ of them.  Now apply another braid to arrange so that a point of type $c_2$ is adjacent to the point of type $c_2^{-1}$.  This shows that $\phi$ is equivalent to a $\overline{c}$-stabilization.
\end{proof}

\subsection{Dilation replacement}
\label{ss:dilation}
In this subsection, we introduce a method called \emph{dilation} for converting a branched cover with negatively trivialized branch points into a cover with a positive trivialization.  The dilation of branching data is a linear map
\[ \begin{aligned}
\dil: \mathbb{Z}_{\ge 0}^{C\sslash G \times \{+1,-1\}} &\to \mathbb{Z}_{\ge 0}^{C\sslash G} \\
w &\mapsto w_{\dil}
\end{aligned} \]
where $w_{\dil}$ is defined by
\[ w_{\dil}(\overline{c}) \defeq w(\overline{c},+1) + (\ord(c)-1) \cdot w(\overline{c},-1). \]
Here $\ord(c)$ is the order of the group element $c$.  If $w$ has cardinality $n$, then $w_{\dil}$ has cardinality $n+N$ where
\[ N = \sum_{\overline{c} \in C\sslash G} (\ord(c)-2)w(\overline{c},-1). \]

Extend this notion of dilation to elements of $R_{g,w}$ via the map
\[ \begin{aligned}
\dil: R_{g,w} &\to R_{g,w_{\dil}} \\
(T,\phi) &\mapsto (+1,\phi_{\dil}) = \phi_{\dil}
\end{aligned} \]
where $+1$ denotes the positive trivialization and $\phi_{\dil}$ is defined by replacing each puncture of type $(\overline{c},-1)$ with $\ord(c)-1$ punctures of type $(\overline{c},+1)$ as in Figure \ref{f:dilation}.  Note that this map $\dil$ depends on a choice of generating set of $\pi_1(\Sigma_{g,n})$; we use any standard set of generators that contains those indicated in the figure.  However, this choice is irrelevant for our purposes, because of our next observation.

\begin{figure*}
\begin{tikzpicture}[decoration={markings,
    mark=at position 0.4 with {\arrow{angle 90}}}]
\draw[very thick] (0,0) circle [x radius=6, y radius = 3];
\draw[very thick, fill = white] (-5,0) .. controls (-5,5) and (-3,5) .. (-3,0) arc [start angle=0, end angle=-180,x radius=1, y radius = .5];
\draw[dashed] (-3,0) arc [start angle=0, end angle=180,x radius =1, y radius = .5];
\draw[very thick] (-4.5,1) arc [start angle = 190, end angle = 350, x radius = .5, y radius = .25];
\draw[very thick] (-4.4,.9) arc [start angle = 175, end angle = 5, x radius = .4, y radius = .2];
\draw[very thick] (-4.5,2.5) arc [start angle = 190, end angle = 350, x radius = .5, y radius = .25];
\draw[very thick] (-4.4,2.4) arc [start angle = 175, end angle = 5, x radius = .4, y radius = .2];
\draw (-4,1.75) node {\large $\vdots$};
\coordinate (c1) at (-1.5,0);
\coordinate (c2) at (-.5,0);
\coordinate (c3) at (.5,0);
\coordinate (c4) at (2.5,0);
\coordinate (c5) at (3.5,0);
\coordinate (c6) at (4.5,0);
\foreach \i in {1,2,...,6} {\fill[medgreen] (c\i) circle [radius=0.05];}
\coordinate (p) at (1.5,-2.5);
\fill[black] (p) circle [radius=0.08];
\foreach \i in {1,4,6} {
\draw[thick,rounded corners,postaction={decorate}] (p) -- ($(c\i)+(.3,0)$) -- ($(c\i)+(0,.3)$) -- ($(c\i)-(.3,0)$) -- (p);
}
\draw ($(c1)+(0,.5)$) node {\large $c_1$};
\draw ($(c2)+(0,.5)$) node {\large $c_2$};
\draw ($(c3)+(0,.5)$) node {\large $c_3$};
\draw ($(c4)+(0,.5)$) node {\large $c_{n-2}$};
\draw ($(c5)+(0,.5)$) node {\large $c_{n-1}$};
\draw ($(c6)+(0,.5)$) node {\large $c_n$};
\foreach \i in {2,3,5} {
\draw[thick,rounded corners,postaction={decorate}] (p) -- ($(c\i)-(.3,0)$)-- ($(c\i)+(0,.3)$) -- ($(c\i)+(.3,0)$)  --  (p);
}

\draw[medgreen] (1.5,0) node {\large $\cdots$};

\draw[|->,very thick] (0,-3.5)--(0,-4.5);
\draw(.5,-4) node {\large $\dil$};

\begin{scope}[yshift=-8cm]
\draw[very thick] (0,0) circle [x radius=6, y radius = 3];
\draw[very thick, fill = white] (-5,0) .. controls (-5,5) and (-3,5) .. (-3,0) arc [start angle=0, end angle=-180,x radius=1, y radius = .5];
\draw[dashed] (-3,0) arc [start angle=0, end angle=180,x radius =1, y radius = .5];
\draw[very thick] (-4.5,1) arc [start angle = 190, end angle = 350, x radius = .5, y radius = .25];
\draw[very thick] (-4.4,.9) arc [start angle = 175, end angle = 5, x radius = .4, y radius = .2];
\draw[very thick] (-4.5,2.5) arc [start angle = 190, end angle = 350, x radius = .5, y radius = .25];
\draw[very thick] (-4.4,2.4) arc [start angle = 175, end angle = 5, x radius = .4, y radius = .2];
\draw (-4,1.75) node {\large $\vdots$};
\coordinate (d1) at (-2,0);
\coordinate (d4) at (2.25,0);
\coordinate (d6) at (5,0);
\coordinate (q) at (1.5,-2.5);
\foreach \i in {1,4,6} {
\draw[thick,rounded corners,postaction={decorate}] (q) -- ($(d\i)+(.2,0)$) -- ($(d\i)+(0,.2)$) -- ($(d\i)-(.2,0)$) -- (q);
\fill[medgreen] (d\i) circle [radius=0.05];
}
\draw ($(d1)+(0,.5)$) node {\large $c_1$};
\draw ($(d4)+(0,.5)$) node {\large $c_{n-2}$};
\draw ($(d6)+(0,.5)$) node {\large $c_n$};
\fill[black] (q) circle [radius=0.08];

\coordinate (d21) at (-1.25,0);
\coordinate (d22) at (-.5,0);
\coordinate (d31) at (.25,0);
\coordinate (d32) at (1,0);
\coordinate (d51) at (3.25,0);
\coordinate (d52) at (4,0);
\foreach \i in {2,3,5} {\fill[medgreen] (d\i1) circle [radius=0.05];
\fill[medgreen] (d\i2) circle [radius=0.05];}
\draw[medgreen] (1.625,0) node {\large $\cdots$};
\draw[medgreen] (-.825,0) node {$\cdots$};
\draw[medgreen] (.65,0) node {$\cdots$};
\draw[medgreen] (3.6,0) node {$\cdots$};

\foreach \i in {2,3,5} {
\draw[thick,rounded corners,postaction={decorate}] (q) -- ($(d\i1)+(.2,0)$) -- ($(d\i1)+(0,.2)$)-- ($(d\i1)-(.2,0)$)  --  (q);
\draw[thick,rounded corners,postaction={decorate}] (q) -- ($(d\i2)+(.2,0)$) -- ($(d\i2)+(0,.2)$)-- ($(d\i2)-(.2,0)$)  --  (q);
}
\draw ($(d21)+(0,.5)$) node {\large $c_2$};
\draw ($(d22)+(0,.5)$) node {\large $c_2$};
\draw ($(d31)+(0,.5)$) node {\large $c_3$};
\draw ($(d32)+(0,.5)$) node {\large $c_3$};
\draw ($(d51)+(0,.5)$) node {$c_{n-1}$};
\draw ($(d52)+(0,.5)$) node {$c_{n-1}$};

\draw[decorate,decoration={brace,amplitude=.1cm}] ($(d21)+(0,.7)$) -- ($(d22)+(0,.7)$);
\draw (-.835,1) node {\tiny $\ord(c_2)-1$};
\draw[decorate,decoration={brace,amplitude=.1cm}] ($(d31)+(0,.7)$) -- ($(d32)+(0,.7)$);
\draw (.635,1) node {\tiny $\ord(c_3)-1$};
\draw[decorate,decoration={brace,amplitude=.1cm}] ($(d51)+(0,.7)$) -- ($(d52)+(0,.7)$);
\draw (3.6,1) node {\tiny $\ord(c_{n-1})-1$};

\end{scope}
\end{tikzpicture}
\caption{The dilation $\phi_{\dil}$ of $(T,\phi) \in R_{g,w}$.  We suppress the description of the monodromy around the handles, since the dilation map does not depend on it anyway.}
\label{f:dilation}
\end{figure*}
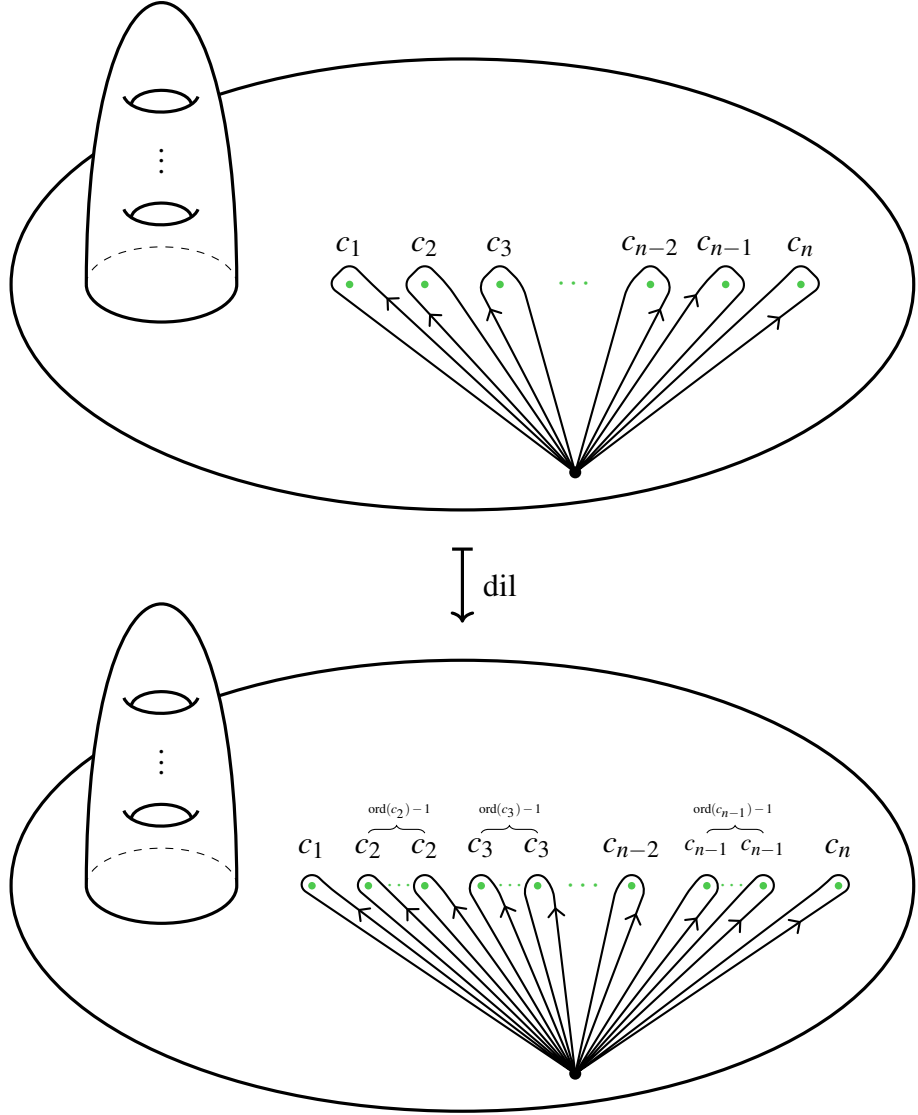

When a mapping class in $\MCG_*(\Sigma_{g,w})$ acts on an element $(T,\phi)$ in $R_{g,w}$, there is an induced action on $\phi_{dil}$ by banding together the dilated points.  Thus, $\dil$ descends to a well defined map
\[ \dil: R_{g,w}/\MCG_*(\Sigma_{g,n}) \to R_{g,w_{\dil}}/\MCG_*i(\Sigma_{g,n+N}). \]

\begin{lemma}
If $w$ satisfies the inequality
\[ w_{\dil}(\overline{c}) > |\overline{c}|\cdot (\ord(c)-1) \cdot w(\overline{c},-1)\]
for all $\overline{c} \in C\sslash G$, then
\[ \dil: R_{g,w}/\MCG_*(\Sigma_{g,n}) \to R_{g,w_{\dil}}/\MCG_*(\Sigma_{g,n+N}) \]
is surjective.
\label{l:dilation}
\end{lemma}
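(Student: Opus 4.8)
The plan is to produce, for each $(+1,\psi)\in R_{g,w_{\dil}}$, an element $(T,\phi)\in R_{g,w}$ whose dilation represents the same $\MCG_*(\Sigma_{g,n+N})$-orbit as $\psi$. The construction simply reverses \Fig{f:dilation}: for every conjugacy class $\overline{c}\in C\sslash G$ we will locate $w(\overline{c},-1)$ disjoint subdisks of $\Sigma_{g,n+N}$, each containing exactly $\ord(c)-1$ branch points of $\psi$ that all carry the \emph{same} monodromy in $\overline{c}$, and collapse each such subdisk to a single branch point. By \Lem{l:extension}, collapsing a disk whose counterclockwise boundary monodromy is $(c')^{-1}$ for some $c'\in\overline{c}$ produces a branch point which, equipped with the framing $-1$, has type $(\overline{c},-1)$; and since $(c')^{-1}$ and $c'$ generate the same cyclic subgroup, the collapsed cover still surjects onto $G$. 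So the result is a genuine element $(T,\phi)\in R_{g,w}$ (the genus and the handle monodromies being untouched throughout), and dilating $\phi$ re-expands each collapsed block back to $\ord(c)-1$ points of the original monodromy, so that $\dil[(T,\phi)]=[\psi]$.

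The content is in arranging $\psi$, by braid moves, so that such subdisks exist, and this is where the hypothesis enters. Put every branch point of $\psi$ in a single disk $D\subseteq\Sigma_{g,n+N}$ together with a standard generating set of $\pi_1$, so each branch point has a well-defined monodromy; enumerate the classes $\overline{c}_1,\dots,\overline{c}_m$ occurring in $w$ and reserve disjoint subdisks $D_1,\dots,D_m$ of $D$, ordered left to right. We process the classes in turn. When we reach $\overline{c}_k$, there are $w_{\dil}(\overline{c}_k)$ branch points of type $\overline{c}_k$, and their monodromies lie in the $|\overline{c}_k|$-element set $\overline{c}_k$; since $w_{\dil}(\overline{c}_k)>|\overline{c}_k|\cdot(\ord(c_k)-1)\cdot w(\overline{c}_k,-1)$, the pigeonhole principle yields an element $c^{(k)}\in\overline{c}_k$ that is the monodromy of at least $(\ord(c_k)-1)w(\overline{c}_k,-1)+1$ of them. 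Exactly as in the proof of \Lem{l:puncture}, we braid $(\ord(c_k)-1)w(\overline{c}_k,-1)$ of these branch points leftward into $D_k$, always routing them \emph{over} the points they cross: this preserves their monodromy (it stays $c^{(k)}$), only conjugates the monodromies of the points passed, and leaves $D_1,\dots,D_{k-1}$ undisturbed. Inside $D_k$ we then group these $(\ord(c_k)-1)w(\overline{c}_k,-1)$ branch points, all of monodromy $c^{(k)}$, into $w(\overline{c}_k,-1)$ consecutive blocks of $\ord(c_k)-1$, each enclosed in its own subdisk.

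After all classes are processed, collapse every resulting subdisk simultaneously. A block of $\ord(c_k)-1$ branch points of common monodromy $c^{(k)}$ has disk-boundary monodromy $(c^{(k)})^{\ord(c_k)-1}=(c^{(k)})^{-1}$, so by the first paragraph it collapses to a branch point of type $(\overline{c}_k,-1)$, while the remaining $w_{\dil}(\overline{c}_k)-(\ord(c_k)-1)w(\overline{c}_k,-1)=w(\overline{c}_k,+1)$ branch points of type $\overline{c}_k$ keep the positive framing. Hence $(T,\phi)$ has branching data exactly $w$, lies in $R_{g,w}$, and $\dil[(T,\phi)]$ is the braided form of $\psi$, which represents $[\psi]$; this gives surjectivity.

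I expect the main obstacle to be the bookkeeping in the middle paragraph: making rigorous that the braid moves for the different conjugacy classes do not interfere — that routing the $\overline{c}_k$-points over everything on their way to $D_k$ genuinely leaves $D_1,\dots,D_{k-1}$ and their internal monodromies intact — and that the simultaneous collapse is compatible with the $\MCG$-action used to define $\dil$. Everything else is a routine reprise of the pigeonhole-and-braid technique of \Lem{l:puncture} and \Lem{l:handle} together with the collapsing/gluing bookkeeping of \Lem{l:extension}; note that, unlike \Lem{l:puncture}, no ``sliding'' move is needed (hence no further use of connectedness beyond inheriting surjectivity of $\phi$), since we are free to let all of the collapsed branch points of a given type share the single monodromy $c^{(k)}$.
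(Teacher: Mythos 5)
Your proof is correct and takes essentially the same pigeonhole-and-braid approach as the paper's own. The minor differences are cosmetic: you apply pigeonhole once per class to extract a single element $c^{(k)}$ repeated at least $(\ord(c_k)-1)\,w(\overline{c}_k,-1)$ times, whereas the paper iterates pigeonhole so the common monodromies $d_1,\dots,d_k$ of the blocks may differ; and the paper sidesteps the interference worry you flag by choosing the disks $D_{\overline{c}}$ to be pairwise disjoint (meeting only at the basepoint), so each class's braid is supported in its own disk and the non-interference is immediate.
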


In particular, the condition of the lemma is satisfied if
\[ \]
We reiterate that $|\overline{c}|$ is the cardinality of the conjugacy class $\overline{c}$, while $\ord(c)$ is the order of the group element $c$.  Moreover, since
\[ w_{\dil}(\overline{c}) = w(\overline{c},+1) + (\ord(c)-1) \cdot w(\overline{c},-1), \]
the inequality in the lemma is equivalent to
\[ w(\overline{c},+1) > (|\overline{c}|-1)\cdot (\ord(c)-1) \cdot w(\overline{c},-1). \]
So the conditions of the lemma are stronger than saying $w$ is large enough.  Fortunately, we will later use this lemma in a ``backwards" way, \ie by picking $w_{\dil}$ first and then choosing $w$ so that it satisfies the conditions of the lemma.  See Lemma \ref{l:final}.

\begin{proof}[Proof of Lemma \ref{l:dilation}]
Let $\psi \in R_{g,w_{\dil}}$.  We must show $\psi$ is equivalent to $\phi_{\dil}$ for some $(T,\phi) \in R_{g,w}$.

For each $\overline{c}$, pick a disk $D_{\overline{c}} \subseteq \Sigma_g$ containing the basepoint and all of the branch points of type $\overline{c}$.  Arranged from left to right, the monodromies around these points reads
\[ c_1, \ c_2, \ c_3, \ \dots, \ c_{w_{\dil}(\overline{c})}, \]
where $c_1,c_2,\dots,c_{w_{\dil}(\overline{c})} \in \overline{c}$.
By iteratively applying the pigeonhole principle and a similar argument as in the proof of Lemma \ref{l:puncture}, we can find a braid that takes this to
\begin{center}
\begin{tikzpicture}
\coordinate (d11) at (0,0);
\coordinate (d12) at (1,0);
\coordinate (d21) at (2,0);
\coordinate (d22) at (3,0);
\coordinate (dk1) at (5,0);
\coordinate (dk2) at (6,0);
\coordinate (db1) at (7,0);
\coordinate (db2) at (8,0);
\draw (0.5,0) node {$\cdots$};
\draw (2.5,0) node {$\cdots$};
\draw (4,0) node {$\cdots$};
\draw (5.5,0) node {$\cdots$};
\draw (7.5,0) node {$\cdots$};
\draw (d11) node {$d_1$};
\draw (d12) node {$d_1$};
\draw (d21) node {$d_2$};
\draw (d22) node {$d_2$};
\draw (dk1) node {$d_k$};
\draw (dk2) node {$d_k$};
\draw (db1) node {$*$};
\draw (db2) node {$*$};
\draw[decorate,decoration={brace,amplitude=.1cm}] ($(d11)+(0,.2)$) -- ($(d12)+(0,.2)$);
\draw[decorate,decoration={brace,amplitude=.1cm}] ($(d21)+(0,.2)$) -- ($(d22)+(0,.2)$);
\draw[decorate,decoration={brace,amplitude=.1cm}] ($(dk1)+(0,.2)$) -- ($(dk2)+(0,.2)$);
\draw (0.5,.6) node {$\ord(c)-1$};
\draw (2.5,.6) node {$\ord(c)-1$};
\draw (5.5,.6) node {$\ord(c)-1$};
\end{tikzpicture}
\end{center}

\noindent where $k={w(\overline{c},-1)}$ and the asterisks $*$ denote various leftover elements in $\overline{c}$.  If we pick all of our disks $D_{\overline{c}}$ to be disjoint (except for the basepoint), then this shows $\psi$ is equivalent to a dilation.  Indeed, since the braids for each $\overline{c}$ are supported in their respective disks, they do not interfere with each other.
\end{proof}

Dilation does not preserve the Schur invariant.  However, we can compute the Schur invariant of a dilation in certain cases.  We require a definition: let $w$ and $v'$ be branching data in $\mathbb{Z}_{\ge 0}^{C\sslash G \times \{+1,-1\}}$.  We say $w$ is a \emph{stabilization} of $v'$ if
\[ (w-v')(\overline{c},+1) = (w-v')(\overline{c},-1) \]
for all $\overline{c} \in C\sslash G$.

\begin{lemma}
Let $w$ and $v'$ be branching data in $\mathbb{Z}_{\ge 0}^{C\sslash G \times \{+1,-1\}}$, and suppose $w$ is a stabilization of $v'$.  If $v'$ is larger than
\[ \sum_{\overline{c} \in C\sslash G} U^{\overline{c}}, \]
then there exists a homology class $\chi_{\dil(w-v')} \in H_2(BG_C)$ such that
\[ \sch_C(\dil(T,\phi)) = \sch_C(T,\phi) + \chi_{\dil(w-v')} \]
for all $(T,\phi) \in R_{g,w}$.
\label{l:dilhom}
\end{lemma}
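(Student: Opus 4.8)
The plan is to exploit the fact that the dilation map is \emph{local}. By construction (Figure \ref{f:dilation}), if we fix disjoint disks $D_1,\dots,D_r \subseteq \Sigma_g$, one containing each of the $r=\sum_{\overline c}w(\overline c,-1)$ negatively framed branch points of $(T,\phi)$, then $\phi_{\dil}$ agrees with $\phi$ outside $\bigsqcup_j D_j$; inside $D_j$ the single branch point of type $(\overline{c_j},-1)$ — say with counterclockwise monodromy $x_j$, so $x_j^{-1}\in\overline{c_j}$ — is replaced by $\ord(x_j)-1$ positively framed branch points, each with counterclockwise monodromy $x_j^{-1}$, whose ordered product is again $x_j$ (here $\ord(x_j)$ is the common order of the elements of $\overline{c_j}$). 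Using \Thm{th:brand} and homotopy extension, we may choose the classifying maps $(T,\phi)_\#$ and $(\dil(T,\phi))_\#\colon \Sigma_g\to BG_C$ to literally agree on $Y\defeq\Sigma_g\setminus\bigsqcup_j D_j^\circ$. The goal is then to show $\sch_C(\dil(T,\phi))-\sch_C(T,\phi)$ is independent of $(T,\phi)\in R_{g,w}$.

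The first step is a formal ``locality of the fundamental class'' observation: if $f,g\colon\Sigma\to X$ agree on $Y$, represent $[\Sigma]$ by a singular cycle $z_Y+\sum_j z_j$ with $z_Y$ supported in $Y$ and each $z_j$ a relative fundamental cycle of $(D_j,\partial D_j)$; pushing forward, the $Y$-parts cancel and each $f_\#(z_j)-g_\#(z_j)$ is an absolute cycle representing the pushforward of $[S^2]$ under the map $D_j\cup_{\partial D_j}\overline{D_j}\to X$ glued from $f|_{D_j}$ and $g|_{\overline{D_j}}$. Applied to our two classifying maps this gives
\[ \sch_C(\dil(T,\phi))-\sch_C(T,\phi)=\sum_{j=1}^{r}\sch_C(Q_j), \]
where $Q_j$ is the $C$-branched $G$-cover of $S^2=D_j\cup_{\partial D_j}\overline{D_j}$ obtained by gluing the dilated cover over $D_j$ to the orientation-reversed original cover over $D_j$. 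Tracking framings and monodromies through the orientation reversal, $Q_j$ has exactly $\ord(x_j)$ branch points, all positively framed, all with counterclockwise monodromy $x_j^{-1}\in\overline{c_j}\subseteq C$, with these monodromies multiplying to $1$.

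The second step is that $\sch_C(Q_j)$ depends only on the conjugacy class $\overline{c_j}$, not on the element $x_j$. Replacing every monodromy $x_j^{-1}$ by a conjugate $g\,x_j^{-1}g^{-1}$ yields an equivalent $C$-branched $G$-cover of $S^2$ — the equivalence is the identity on $S^2$ together with the $G$-bundle isomorphism induced by multiplication by $g$ in the fibers — and equivalent covers are concordant via the mapping cylinder, hence induce the same class in $H_2(BG_C)$ by \Thm{th:brand}. Writing $q_{\overline c}\in H_2(BG_C)$ for this common value — equivalently $q_{\overline c}=\sch_C(\dil(S^2_{\overline c}))$, the Schur invariant of the cover of $S^2$ by $\ord(c)$ positively framed branch points of type $\overline c$ with equal monodromies multiplying to $1$ — and grouping the sum above by conjugacy type, we obtain
\[ \sch_C(\dil(T,\phi))-\sch_C(T,\phi)=\sum_{\overline c\in C\sslash G}w(\overline c,-1)\,q_{\overline c}. \]
The right-hand side depends only on $w$, so we may take $\chi_{\dil(w-v')}$ to be this class, which proves the lemma. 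The hypotheses serve to present this class in the form needed downstream: since $v'$ is larger than $\sum_{\overline c}U^{\overline c}$, \Lem{l:puncture} makes the puncture-stabilization maps from $R_{g,v'}$ onto $R_{g,w}$ surjective, and an $\overline c$-stabilization adds an $S^2_{\overline c}$-summand which dilation turns into a $\dil(S^2_{\overline c})$-summand; this is the bookkeeping that justifies indexing the correction by $\dil(w-v')$ and matches the use of the lemma in \Lem{l:final}.

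The main obstacle is the locality step: one must check that the difference of the two homology classes of maps is accounted for \emph{exactly} by the glued sphere covers $Q_j$, with no residual contribution from $Y$ or from the gluing annuli, and that framings behave correctly under the cut-and-paste (equivalently, that oriented $2$-dimensional bordism of framed branched covers has the expected excision behaviour). Once that bookkeeping is in place, the explicit description of $Q_j$ from Figure \ref{f:dilation} and the conjugation invariance of $\sch_C$ on $S^2$-covers are routine.
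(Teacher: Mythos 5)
Your proof is correct but takes a genuinely different route from the paper's. The paper's argument is \emph{global}: it invokes Lemma~\ref{l:puncture} (hence the hypothesis that $v'$ is larger than $\sum_{\ov{c}}U^{\ov{c}}$) to write every $(T,\phi)\in R_{g,w}$ as a connect sum $(S,\psi)\#S^2_{w-v'}$ with $(S,\psi)\in R_{g,v'}$ and a \emph{fixed} puncture-stabilizing sphere $S^2_{w-v'}$, and then compares $\sch_C(\dil(T,\phi))=\sch_C\bigl((S,\psi)\#S^2_{\dil(w-v')}\bigr)$ with $\sch_C(T,\phi)=\sch_C(S,\psi)+\sch_C(S^2_{w-v'})$, using $\sch_C(S^2_{w-v'})=0$, to get $\chi_{\dil(w-v')}=\sch_C(S^2_{\dil(w-v')})$. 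Your argument is instead \emph{local}: since $\dil$ only modifies the cover inside disjoint disks around the negatively framed branch points, the standard difference-class/excision computation identifies $\sch_C(\dil(T,\phi))-\sch_C(T,\phi)$ as a sum of Schur invariants of glued spheres $Q_j$, and conjugation invariance collapses this to $\sum_{\ov{c}}w(\ov{c},-1)\,q_{\ov{c}}$, which manifestly depends only on $w$. What your approach buys: it is more elementary, it gives an explicit formula for $\chi_{\dil(w-v')}$, and it shows the conclusion actually holds \emph{without} the hypotheses that $w$ stabilizes $v'$ and that $v'$ is large (those hypotheses feed Lemma~\ref{l:puncture} in the paper's proof, and are used again downstream in Lemma~\ref{l:final} where the present lemma is combined with surjectivity of dilation). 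What the paper's approach buys: it stays within the connect-sum calculus already set up for Proposition~\ref{p:stable}, avoiding the singular-chain bookkeeping that you correctly flag as the delicate point of the excision step. Both give a class depending only on $w$, consistent with how the lemma is used, so the argument goes through either way.
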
 

\begin{proof}
By Lemma \ref{l:puncture}, every element of $R_{g,w}$ is a stabilization of an element in $R_{g,v'}$.  That is, there exists some puncture stabilizing sphere, which we shall denote by $S^2_{w-v'}$, such that for all $(T,\phi) \in R_{g,w}$, $(T,\phi)$ is equivalent to the connect sum of $S^2_{w-v'}$ and some element $(S,\psi) \in R_{g,v'}$.  There exists another branched cover of the sphere, which we denote by $S^2_{\dil(w-v')}$, such that $\dil(T,\phi)$ is equivalent to the connect sum of $(S,\psi)$ and $S^2_{\dil(w-v')}$.  Let $\chi_{\dil(w-v')} = \sch_C(S^2_{\dil(w-v')})$.  Then
\[ \begin{aligned}
\sch_C(\dil(&T,\phi)) = \sch_C(S,\psi) + \sch_C(S^2_{\dil(w-v')}) \\
&=  \sch_C(T,\phi) - \sch_C(S^2_{w-v'}) + \sch_C(S^2_{\dil(w-v')}) \\
&= \sch_C(T,\phi) - 0 + \chi_{\dil(w-v')} \\
&= \sch_C(T,\phi) + \chi_{\dil(w-v')}.
\end{aligned} \]
\end{proof}

\subsection{Proof of \Thm{th:main}}
\label{ss:main}
Proposition \ref{p:stable} implies that any two elements of $R_{g,v}$ with the same Schur invariant are equivalent in some $R_{g+M,w}$.  In other words, two orbits in $R_{g,v}/\MCG_*(\Sigma_{g,n})$ that have the same Schur invariant eventually merge into one orbit inside some $R_{g+M,w}/\MCG_*(\Sigma_{g+M,n+2N})$.  Because $G$ is finite, all of the sets of orbits are finite, and so Lemmas \ref{l:puncture} and \ref{l:handle} imply that all of the handle and puncture stabilization maps are bijections when $g$ and $v$ are large enough.  We conclude that only a finite amount of merging must occur before the Schur invariant maps
\[ \sch_C: R_{g+M,w}/\MCG_*(\Sigma_{g+M,n+2N}) \to H_2(BG_C) \]
are injections.  This proves

\begin{proposition}
For all $w \in \mathbb{Z}_{\ge 0}^{C\sslash G \times \{+1,-1\}}$ and $g$ large enough, the Schur invariant yields an injective map
\[ \sch_C: R_{g,w}/\MCG_*(\Sigma_{g,n}) \to H_2(BG_C). \]
\qed
\label{p:injection}
\end{proposition}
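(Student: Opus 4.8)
The plan is to leverage Proposition~\ref{p:stable}: two connected covers have equal $C$-branched Schur invariant if and only if they become equivalent after some sequence of handle and puncture stabilizations, so it suffices to show that, once $g$ and $w$ are large enough, these stabilizations cannot actually merge two distinct $\MCG_*(\Sigma_{g,n})$-orbits. First I would upgrade ``stably equivalent'' to ``identified by a single common stabilization.'' Suppose $(T,\phi),(S,\psi)\in R_{g,w}$ have the same Schur invariant. Proposition~\ref{p:stable} produces sequences of handle and puncture stabilizations of each that become equivalent \emph{as branched covers}; since equivalent covers have the same genus and the same branch data, both sequences add the same number $a$ of handles and the same balanced collection $b$ of puncture pairs, and because a handle stabilization (resp.\ a $\overline{c}$-puncture stabilization) of a connected cover is well defined up to equivalence, there is a single composite $\Phi=h^{a}\circ p_{b}$ with $\Phi(T,\phi)=\Phi(S,\psi)$ in the correspondingly stabilized orbit set. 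Applying further stabilizations preserves this equality, so one $\Phi$ serves simultaneously for all of the finitely many pairs of orbits of $R_{g,w}/\MCG_*(\Sigma_{g,n})$ that share a Schur invariant.

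Next I would show the stabilization maps are eventually bijective. Because $G$ is finite, every $R_{g',v'}/\MCG_*$ is a finite set. By \Lem{l:handle} the handle map $h$ is surjective once $g'>|G|$, and by \Lem{l:puncture} each $p_{\overline{c}}$ is surjective once $v'$ is larger than $U^{\overline{c}}$; on the upward-closed collection of pairs $(g',v')$ where all of these hold, the cardinalities $|R_{g',v'}/\MCG_*|$ are non-increasing along the transition maps, hence eventually constant, and wherever that value is attained the transition map in question is a surjection between finite sets of equal size, that is, a bijection. The commuting square of \Fig{f:commute} makes these bijections for the various puncture types and for the genus mutually compatible; I will call the region on which all of them are bijections the \emph{stable range}, and it is closed under further stabilization.

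Finally I would conclude. Handle and puncture stabilizations preserve $\sch_C$, since $S^2_{\overline{c}}$ and the trivial cover of the torus are null-cobordant (\Fig{f:trivializations}, \Fig{f:torus}), so the composite $\Phi$ of the first paragraph is $\sch_C$-preserving; conversely $\Phi$ identifies any two orbits with a common Schur invariant. If $(g,w)$ lies in the stable range, then so does every stage of $\Phi$, so $\Phi$ is a composite of the bijections from the second paragraph and is therefore injective; hence any two orbits of $R_{g,w}/\MCG_*(\Sigma_{g,n})$ with the same Schur invariant already coincide, which is the claim. I expect the main obstacle to be the second paragraph --- turning ``eventually surjective, hence eventually bijective'' into a precise statement over the multi-parameter net of stabilizations, and exhibiting one threshold that is simultaneously large enough in the genus and in all $|C\sslash G|$ puncture multiplicities. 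The cofinality bookkeeping, and the verification in the first paragraph that the two stabilization sequences can be taken to coincide, are routine but need to be written with care.
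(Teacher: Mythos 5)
Your argument is correct and follows essentially the same route as the paper's proof: invoke Proposition~\ref{p:stable} to get stable equivalence, use Lemmas~\ref{l:handle} and~\ref{l:puncture} together with finiteness of $G$ to show the stabilization maps are eventually bijections on finite orbit sets, and conclude that in the stable range distinct orbits cannot merge, so $\sch_C$ is injective. You have merely filled in a few details the paper leaves implicit (that the two stabilization sequences can be taken to coincide, and the directed-poset bookkeeping for the ``eventually constant cardinality'' argument).
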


At first glance, this proposition is very close to the statement of Theorem \ref{th:main}(1); however, the proposition allows negatively trivialized branch points, while Theorem \ref{th:main} does not.  We now use dilation to address this.   By combining the proposition and Lemma \ref{l:dilation}, it is straightforward to prove the following

\begin{lemma}
For all $v \in \mathbb{Z}_{\ge 0}^{C\sslash G}$ large enough, we can write $v=w_{dil}$ where $w \in \mathbb{Z}_{\ge 0}^{C\sslash G \times \{+1,-1\}}$ and the following conditions hold:
\begin{enumerate}
\item $\sch_C: R_{g,w}/\MCG_*(\Sigma_{g,n}) \to H_2(BG_C)$ is injective,
\item $\dil: R_{g,w}/\MCG_*(\Sigma_{g,n}) \to R_{g,v}/\MCG_*(\Sigma_{g,n+N})$ is surjective,
\item $w$ is a stabilization of positive branching data $v' \in \mathbb{Z}_{\ge 0}^{C\sslash G \times \{+1\}}$, and
\item $v'$ is larger than $\sum_{\overline{c} \in C\sslash G} U^{\overline{c}}$. \qed
\end{enumerate} 
\label{l:final}
\end{lemma}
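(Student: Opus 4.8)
The plan is to build $w$ by inserting, for each conjugacy class, a fixed number of negatively framed branch points and compensating in the positively framed count so that $\dil$ carries $w$ back to $v$; the four requirements then become a finite list of linear inequalities in the entries of $v$, all of which are satisfied once every entry of $v$ is large.

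Concretely, I would fix for each $\overline{c} \in C\sslash G$ an integer $m_{\overline{c}} \geq 1$ depending only on $G$ and $C$ (one may take $m_{\overline{c}}=1$), and, given $v$, set
\[ w(\overline{c},-1) \defeq m_{\overline{c}}, \qquad w(\overline{c},+1) \defeq v(\overline{c}) - (\ord(c)-1)\,m_{\overline{c}}. \]
This is honest nonnegative branching data as soon as $v(\overline{c}) \geq (\ord(c)-1)m_{\overline{c}}$, and the definition of dilation on branching data gives $w_{\dil}(\overline{c}) = w(\overline{c},+1) + (\ord(c)-1)\,w(\overline{c},-1) = v(\overline{c})$, so indeed $v = w_{\dil}$. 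Next I would put $v'(\overline{c},+1) \defeq w(\overline{c},+1) - m_{\overline{c}} = v(\overline{c}) - \ord(c)\,m_{\overline{c}}$ and $v'(\overline{c},-1) \defeq 0$, legitimate positive branching data once $v(\overline{c}) \geq \ord(c)\,m_{\overline{c}}$. Then $(w-v')(\overline{c},+1) = m_{\overline{c}} = w(\overline{c},-1) = (w-v')(\overline{c},-1)$ for all $\overline{c}$, so $w$ is a stabilization of $v'$: that is condition (3). Condition (4), that $v'$ beat $\sum_{\overline{c}}U^{\overline{c}}$ on the relevant entries, unwinds to $v(\overline{c}) - \ord(c)\,m_{\overline{c}} > |\overline{c}|\cdot|\Inn_{\overline{c}}(c)|$ for every $\overline{c}$, which holds for $v$ large.

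For condition (2) I would apply Lemma~\ref{l:dilation} to this $w$: its hypothesis $w_{\dil}(\overline{c}) > |\overline{c}|\cdot(\ord(c)-1)\cdot w(\overline{c},-1)$ reads $v(\overline{c}) > |\overline{c}|\cdot(\ord(c)-1)\cdot m_{\overline{c}}$, again true for $v$ large, so $\dil: R_{g,w}/\MCG_*(\Sigma_{g,n}) \to R_{g,v}/\MCG_*(\Sigma_{g,n+N})$ is surjective. For condition (1), the key remark is that $w$ carries a negatively framed branch point of every type (since $m_{\overline{c}} \geq 1$), so nothing is degenerate and Proposition~\ref{p:injection} applies to this particular $w$, yielding injectivity of $\sch_C$ on $R_{g,w}/\MCG_*(\Sigma_{g,n})$ for $g$ large enough. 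Taking the lower bound on each $v(\overline{c})$ to be the maximum of the finitely many explicit constants accumulated above, and $g$ large enough for Proposition~\ref{p:injection}, produces a $w$ meeting all four conditions.

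I do not anticipate a real obstacle: once the ansatz for $w$ is fixed, the four conditions form a finite system of inequalities in the $v(\overline{c})$ with coefficients built only from $|G|$, the orders $\ord(c)$, the class sizes $|\overline{c}|$, and the numbers $|\Inn_{\overline{c}}(c)|$, and each merely demands that one coordinate of $v$ exceed a constant, so the system is jointly satisfiable. The single point that deserves attention is that the genus threshold in condition (1) is permitted to depend on $w$, hence on $v$; this is exactly the shape of Proposition~\ref{p:injection}, and it is why the present lemma should be read with $g$ as a large-enough parameter rather than an absolute constant.
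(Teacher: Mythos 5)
Your ansatz for $w$, $v'$ and the verifications of conditions (2), (3), (4) are correct and in the spirit of what the paper intends (the paper offers no explicit proof, only the remark that the lemma follows ``straightforwardly'' from Proposition~\ref{p:injection} and Lemma~\ref{l:dilation}). The gap is in condition (1): you treat Proposition~\ref{p:injection} as asserting ``for every fixed $w$, $\sch_C$ is injective once $g$ is large enough (depending on $w$),'' and take this to justify a constant choice $m_{\overline{c}}=1$. But the proof of Proposition~\ref{p:injection} runs through Proposition~\ref{p:stable}, which merges orbits only after applying \emph{both} handle and puncture stabilizations, and the bijectivity of those stabilization maps requires $g$ \emph{and all entries of $w$} to exceed thresholds. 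If the quantifier were really ``for every $w$,'' one could take $w = v$ (i.e.\ $m_{\overline{c}}=0$) and Proposition~\ref{p:injection} would already prove Theorem~\ref{th:main}(1) outright, making the entire dilation apparatus superfluous; this is exactly what the paper warns against right after Proposition~\ref{p:injection}, when it notes the proposition ``allows negatively trivialized branch points, while Theorem~\ref{th:main} does not.'' So the proposition must be read with $w$ large as well.

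With $m_{\overline{c}}$ a fixed constant, $w(\overline{c},-1) = m_{\overline{c}}$ never enters the stable range no matter how large $v$ is, so condition (1) is not secured. The remedy keeps your construction but lets $m_{\overline{c}}$ grow with $v$: for instance take $m_{\overline{c}}$ to be a small constant fraction of $v(\overline{c})$, chosen so that the inequalities $v(\overline{c}) > |\overline{c}|(\ord(c)-1)\,m_{\overline{c}}$ (for Lemma~\ref{l:dilation}), $v(\overline{c}) - \ord(c)\,m_{\overline{c}} > |\overline{c}|\cdot|\Inn_{\overline{c}}(c)|$ (for condition (4)), and the nonnegativity $v(\overline{c}) - (\ord(c)-1)\,m_{\overline{c}} \geq 0$ continue to hold. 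All of these are linear in $v(\overline{c})$ with a strictly positive leading coefficient, so a fraction like $m_{\overline{c}} = \bigl\lfloor v(\overline{c})/(2\,|\overline{c}|\ord(c)) \bigr\rfloor$ works once $v(\overline{c})$ is large, and it sends $w(\overline{c},-1)\to\infty$ so that $w$ genuinely enters the stable range. With that adjustment, the rest of your argument goes through. You correctly flagged that the genus threshold depends on $w$; the analogous dependence on the magnitude of $w$ itself is the part that needs the same care.
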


Now suppose $v \in \mathbb{Z}_{\ge 0}^{C\sslash G}$ is large enough for the lemma to hold.  Let $\phi,\psi \in R_{g,v}$ and suppose $\sch_C \phi = \sch_C \psi$.  By condition 2, there are $(T',\phi')$ and $(S',\psi')$ in $R_{g,w}$ such that $\phi = \dil(T',\phi')$ and $\psi = \dil(S',\psi')$.  Lemma \ref{l:dilhom} shows
\[ \begin{aligned}
\sch_C(T',\phi') &= \sch_C(\phi) - \chi_{w-v'} \\ &= \sch_C(\psi) - \chi_{w-v'} = \sch_C(S',\psi')
\end{aligned} \]
By condition 1, $(T',\phi)$ and $(S',\psi')$ are in the same $\MCG_{g,n}$.  Condition 2 now implies that $\phi$ and $\psi$ represent the same orbit in $R_{g,v}/\MCG_*(\Sigma_{g,n+N})$.  This proves part 1 of Theorem \ref{th:main}.

To prove part 2, observe that when $C$ generates $G$, the second part of Proposition \ref{p:stable} says that we never need to introduce handle stabilizations in the above argument.  That is, two orbits in $R_{g,v}/\MCG_*(\Sigma_{g,n})$ that have the same Schur invariant merge into one orbit inside some $R_{g,w}/\MCG_*(\Sigma_{g,n+2N})$.  Now continue with the same argument to finish the proof of part 2.

Finally, we show that $R_{g,v}/\MCG_*(\Sigma_{g,n})$ is a torsor for $M(G)_C$ in the cases where $g$ and $v$ are large enough, and where $v$ is large enough and $C$ generates $G$.  At the end of \Sec{ss:homology} we described an exact sequence
\[ 0 \to M(G)_C \to H_2(BG_C) \to N \to 0, \]
where the map $H_2(BG_C) \to N$ is the homological branch type.  All of the elements of $R_{g,v}$ have the same branch type, so, in particular, they have the same homological branch type $[v] \in N$.  Thus, for any $(T,\phi), (S,\psi) \in R_{g,v}$,
\[ \sch_C(T,\phi) - \sch_C(S,\psi) \in M(G)_C.\]
To conclude, we must show that every element of $M(G)_C$ can be represented by such a difference.

We first consider the case where $g$ and $v$ are large enough.  Fix a branched cover of the oriented disk with branch type $v$.  Since $N$ is the kernel of the evaluation map $\mathbb{Z}^{C \sslash G} \to G_{ab}$, the boundary monodromy of this cover is in the commutator subgroup $[G,G]$.  Let $g_1$ be the commutator length of $G$, \ie
\[ g_1 = \max_{x\in [G,G]} cl(x), \]
where
\[ cl(x) = \min \{ l \mid \exists a_1,b_1,\dots,a_l,b_l \in G, [a_1,b_1]\cdot [a_l,b_l] = x \}. \]
We can extend the chosen branched cover of the disk to a $C$-branched $G$-cover $(T,\phi)$ of a closed oriented surface with the same branch type $v$ and genus $g_1$; if necessary, we can pick $g_1$ even larger so that $(T,\phi_1) \in R_{g_1,v}$ is connected.  Let $g_2$ be large enough so that every element $\chi \in M(G)_C$ can be represented by an unbranched $G$-cover $\phi_\chi$ of a closed surface of genus $g_2$.  Let $g_3=g-g_1-g_2$ and let $\phi_3$ be the trivial $G$-cover over a surface of genus $g_3$.  Let $(T,\psi_\chi)$ be any element of $R_{g,v}$ equivalent to the connect sum
\[ (T,\phi) \# \phi_\chi \# \phi_3. \]
The Schur invariant of this cover is
\[\sch(T,\psi_\chi) = \sch_C(T_1,\phi_1) + \chi. \]
In particular, for every $\chi \in M(G)_C$,
\[ \sch(T,\psi_\chi) -\sch(T,\psi_0) = \chi. \]

Finally, suppose $v$ is large enough and $C$ generates $G$.  Lemma \ref{l:unbranched} shows that every element of $M(G)_C$ can be represented by an unbranched cover of $S^2$.  Better yet, there exists a branch type $v_1 \in \mathbb{Z}_{\ge 0}^{C\sslash G \times \{+1,-1\}}$ such that every element $\chi \in M(G)_C$ can be represented by an element $(T_\chi, \phi_\chi)$ of $R_{0,v_1}$.  Fix any element $(T_1,\phi_1)$ in $R_{0,v-v_1}$.  For every $\chi \in M(G)_C$, let $(T,\psi_\chi)$ be an element of $R_{0,v}$ that is equivalent to the connect sum
\[ (T_\chi, \phi_\chi) \# (T_1,\phi_1) \]
As before,
\[\sch(T,\psi_\chi) = \sch_C(T_1,\phi_1) + \chi, \]
and in particular,
\[ \sch(T,\psi_\chi) -\sch(T,\psi_0) = \chi \]
for every $\chi \in M(G)_C$.
\qed

\section{Outlook}
\label{s:outlook}
\subsection{Understanding the stable range}
\label{ss:unstable}
We remark that while Lemmas \ref{l:handle}, \ref{l:puncture} and \ref{l:dilation} provide specific lower bounds, the merging argument in the proof of \Thm{th:main} does not give any bounds whatsoever on when merging stops.  Thus, we do not have an upper bound for when the stable range begins.  The problem of computing the stable range is closely related to the \emph{quantitative Steenrod problem} in $H_2(G)$, \ie the question of finding a smallest genus representative of a given homology class in $H_2(G)$.  It could be interesting to try to bound the stable range for specific families of finite groups, such as solvable groups or the non-abelian simple groups.

\subsection{Applications to $G$-equivariant TQFT}
\label{ss:motivation}
The subject of this paper first received attention as early as Nielsen in 1937 \cite{Nielsen:periodisher}.  Despite its age, it takes on a new life in the context of topological quantum field theory.

The linearization of the permutation action of $\MCG_*(\Sigma_{g,n})$ on $\hR_{g,n}$ is closely related to one of the simplest examples of a topological quantum field theory, the untwisted Dijkgraaf-Witten theory with gauge group $G$ \cite{DijkgraafWitten:gauge, FreedQuinn:finite}.  In fact, this linearized action is precisely the representation of $\MCG_*(\Sigma_{g,n})$ afforded by the $G$-crossed modular tensor category of $G$-graded vector spaces.  Every $G$-crossed modular tensor category gives rise to an extended, pointed $(2+1)$-dimensional homotopy quantum field theory with target $K(G,1)$, and, hence, a (projective) representation of $\MCG_*(\Sigma_{g,n})$ on a vector space
\[ V^\times(\Sigma_{g,n}) = \bigoplus_{\phi \in \hR_{g,n}} V^\times(\Sigma_{g,n},\phi)\]
consisting of blocks indexed by the $G$-representation set $\hR_{g,n}$. \cite{Turaev:HQFT, TuraevVirelizier:HQFT2}.  The linear action of $\MCG_*(\Sigma_{g,n})$ on $V^\times(\Sigma_{g,n})$ refines the permutation action of $\MCG_*(\Sigma_{g,n})$ on the index set $\hR_{g,n}$.

In recent works, the author and Kuperberg study the action of $\MCG_*(\Sigma_{g,n})$ on $\hR_{g,n}$ under the assumption that $G$ is a nonabelian simple group \cite{KuperbergSamperton:zombies, KuperbergSamperton:coloring}.  In this case, we understand more than just the orbits of the action: building on \cite{DunfieldThurston:random} and \cite{RobertsVenkatesh:full}, the results of \cite{KuperbergSamperton:coloring} and \cite{KuperbergSamperton:zombies} establish a precise version of \emph{classical} topological computing via this action.  Complexity-theoretic hardness results for combinatorial 3-manifold invariants ensue.  As we now explain, we advertise this result here because we believe it may be of interest in the study of topological quantum computing with symmetry enriched topological phases (see \cite{DelaneyWang:defects} for an introduction).

Recall that the algebraic model of a $(2+1)$-dimensional $G$-symmetry enriched topological phase of matter ($G$-SET phase) is believed to be a $(2+1)$-dimensional $G$-crossed unitary modular tensor category \cite{BBCW:gauging}.  The physical interpretation says that a $G$-representation $\phi \in \hR_{g,n}$ is a background field that gauges some internal $G$-symmetry of a topological phase residing on $\Sigma_{g,n}$.  The subspaces $V^\times(\Sigma_{g,n},\phi)$ are called twisted sectors.  In the language of condensed matter physics, this paper seeks to understand when two gauge fields on $\Sigma_{g,n}$ are equivalent under a modular transformation.  This question recently received some attention in the physics literature, where the case of $n=0$ and $G$ abelian was solved \cite{BarkeshliWen:Z2, BBCW:gauging}.  (We note that the cyclic case was already known to Nielsen \cite{Nielsen:periodisher}, and the abelian and metacyclic cases were solved by Edmonds in the early 1980s \cite{Edmonds:symmetry1, Edmonds:symmetry2}.)

When every twisted sector is 1-dimensional, a $G$-SET phase is instead called a $G$-symmetry protected topological phase, or $G$-SPT phase.  It is well-known that topological quantum computing with a $G$-SPT phase is never quantum universal.  Nevertheless, the results of \cite{KuperbergSamperton:zombies, KuperbergSamperton:coloring}  imply that, at least for nonabelian simple $G$, a kind of topological computing with a $G$-SPT phase is $\shP$-complete via parsimonious reduction, a precise notion of classical (\emph{i.e.}~non-quantum) universality.  More generally, it follows that for such $G$, every $G$-SET phase can model classical reversible circuits.

Accordingly, when $G$ is nonabelian simple, one might hope that for the topological operations available from a $G$-SET phase to be quantum universal, the only thing left to find is a single entangling gate between two states in different twisted sectors.  Unfortunately, it is expected that no such entangling gate exists using topological operations.  Moreover, unless one is willing to believe that quantum computers can efficiently solve problems in $\shP$, the results of \cite{KuperbergSamperton:zombies, KuperbergSamperton:coloring} can be understood as evidence that preparing, measuring and topologically manipulating arbitrary gauge fields on a surface is too hard for a quantum computer to do efficiently.  Nevertheless, knowledge of the action of $\MCG_*(\Sigma_{g,n})$ on $\hR_{g,n}$ could help when designing protocols for universal gate sets augmented with \emph{non-topological} operations, as in \cite{DelaneyWang:defects}.

\providecommand{\bysame}{\leavevmode\hbox to3em{\hrulefill}\thinspace}
\providecommand{\href}[2]{#2}

\typeout{get arXiv to do 4 passes: Label(s) may have changed. Rerun}

\end{document}